\newtheorem{theorem}{Theorem}[section]
\newtheorem{corollary}[theorem]{Corollary}
\newtheorem{proposition}[theorem]{Proposition}
\theoremstyle{definition}
\newtheorem{definition}[theorem]{Definition}
\theoremstyle{notation}
\theoremstyle{remark}
\newtheorem{remark}[theorem]{Remark}
\numberwithin{equation}{section}
\numberwithin{equation}{subsection}
\newcommand{\be}%
  {\protect\setcounter{equation}{\value{subsubsection}}}
  \newcommand{\ee}%
   {\protect\setcounter{subsubsection}{\value{equation}}}
\def \rmA{\rm A}
\def \BG{\rm BG}
\def \EG1{\rm EG}
\def \BN(T){\rm BN_{G}(T)}
\def \rmB{\rm B}
\def \CH{\rm {CH}}
\def \Cl{\mathbb C}
\def \colimm{\underset {m \rightarrow \infty}  {\hbox {lim}}}
\def \colimn{\underset {n \rightarrow \infty}  {\hbox {lim}}}
\def \colimK.{\underset {\underset K^.  \rightarrow}  {\hbox {lim}}}
\def \colimU.{\underset {\underset U_.  \rightarrow}  {\hbox {lim}}}
\def \compl{\, \, {\widehat {}}}
\def \rmD{\rm D}
\def \rmd{\rm d}
\def \DS1X{\rm {DS^1X}}
\def \rmD{\rm D}
\def \cE{\mathcal E}
\def \EG1{E{(G \times {\mathbb C}^*)}{\underset {G\times {\mathbb C}^*} 
\times}}
\def \EZ(s)1{E{(Z(s) \times {\mathbb C}^*)}{\underset {(Z(s)\times {\mathbb
C}^*)}  \times}}
\newcommand{\eps}{ \, {\boldsymbol\varepsilon} \,}
\def \EM(u){EM(u){\underset {M(u)}  \times}}
\def \EM(us){EM(u,s){\underset {M(u, s)}  \times}}
\def \EG{\rm EG}
\def \rmE{\rm E}
\def \rmf{\rm f}
\def \rmF{\rm F}
\def \group{\rm G}
\def \rmG{\rm G}
\def \GL{\rm {GL}}
\def\holimD{\mathop{\textrm{holim}}\limits_{\Delta }}
\def\holim{\mathop{\textrm{holim}}\limits}
\def \holimn {\underset {\infty \leftarrow n}  {\hbox {holim}}}
\def \holimm {\underset {\infty \leftarrow m}  {\hbox {holim}}}
\def \H{\mathbb H}
\def \rmH{\rm H}
\def \Hom{\underline {Hom}}
\def \Hom{{\mathcal H}om}
\def \rmh{\rm h}
\def \invlimn{\underset {\infty \leftarrow n}  {\hbox {lim}}}
\def \invlim1{\underset {\infty \leftarrow q}  {\hbox {lim}}^1}
\def \oJ{\rm J}
\def \rmK{\rm K}
\def \k{\it k}
\def \L3{\Lambda \times \Lambda \times \Lambda}
\def \L2{\Lambda \times \Lambda}
\def \lim{\underset \leftarrow  {\hbox {lim}}}
\def \invlimn{\underset {\infty \leftarrow n}  {\hbox {lim}}}
\def \limn{\underset {\infty \rightarrow n}  {\hbox {lim}}}
\def \longright2arrow{{\overset \longrightarrow  {\overset {} 
\longrightarrow}}}
\def \L{L\times \Cl ^*}
\def \rmL{\rm L}
\def \rmM{\rm M}
\def \M{\mathcal M}
\def \Map{\underline {Map}}
\def \Map{{\mathcal M}ap}
\def \N(T){\rm {N_{G}(T)}}
\def \rmN{\rm N}
\def \NT{\rm N_{G}(T)}
\def \Nis{\rm {Nis}}
\def \rmp{\rm p}
\def \rmP{\rm P}
\def \Spt{\rm {Spt}}
\def \rmQ{\rm Q}
\def \rmq{\rm q}
\def \ra{\rightarrow}
\def \RG^{R(G)^{\hat {}}\ }
\def \res{respectively}
\def \rmS{\rm S}
\def \RHom{{{\mathcal R}{\mathcal H}om}}
\def \rmRHom{\rm RHom}
\def \rmR{\rm R}
\def \Sm{\rm {Sm}}
\def \Speck{{\rm {Spec}}\, {\it k}}
\def\Spt{\rm {\bf Spt}}
\def \Sph{\rm {Sph}}
\def \SH{{\mathcal S}{\mathcal H}}
\def\Spt{\rm {\bf Spt}}
\def\Spc{\rm {\bf Spc}}
\def \mbS{\mathbb S}
\def \rmS{\rm S}
\def \topGcoh*{^{top, *} _{G}}
\def \topGho*{ _{top,*} ^{G}}
\def \T{{\mathbf T}}
\def \tr{\it {tr}}
\def \rmT{\rm T}
\def \rmU{\rm U}
\def \U{\mathcal U}
\def \rmV{\rm V}
\def \rmW{\rm W}
\def \rmX{\rm X}
\def \X{\mathcal X}
\def \Y{\mathcal Y}
\def \rmY{\rm Y}
\def \Z(s){Z(s) \times {\mathbb C}^*}
\def \Z{\mathcal Z}
\def \bZ{\mathbb Z}
\def \rmZ{\rm Z}
\def\Zl{{\mathbb Z}/\ell}
\begin{document}

\title{The Motivic and \'Etale Becker-Gottlieb transfer and splittings}
%    Information for first author
\author{Gunnar Carlsson}
%  Address of record for the research reported here
\address{Department of Mathematics, Stanford University, Building 380, Stanford,
California 94305}
\email{gunnar@math.stanford.edu}
\thanks{  }  
\author{Roy Joshua}
%    Address of record for the research reported here
\address{Department of Mathematics, Ohio State University, Columbus, Ohio,
43210, USA}
\email{joshua@math.ohio-state.edu}
\thanks{}
%\author{Pablo Pelaez}
%\address{Instituto de Matem\'aticas, Ciudad Universitaria, UNAM, DF 04510, M\'exico.}
%\email{pablo.pelaez@im.unam.mx}

%   Current address
%\curraddr{Max Planck Institut f\"ur Mathematik,
 %         P.O. Box 7280, Bonn 53072
  %        GERMANY}

    %\thanks will become a 1st page footnote.
\thanks{2010 AMS Subject classification:  Primary: 14F20, 14F42, 14L30, Secondary: 55P91, 55P92.\\ \indent  Both authors were supported by  grants
from the NSF at various stages on work on this paper. The second author was also supported by a grant from the Simons Foundation.
The second author would also like to thank the Isaac Newton Institute for Mathematical Sciences, Cambridge, for support and 
hospitality during the programme {\it K-Theory, Algebraic Cycles and Motivic Homotopy Theory} where part of the work on this paper was carried 
out supported by EPSRC grant no EP/R014604/1.}
\begin{abstract} In this paper, we establish the key properties of the motivic and \'etale Becker-Gottlieb transfer  
including compatibility with \'etale and Betti realization and show how to obtain various splittings using the transfer.

\end{abstract}
\maketitle

\centerline{\bf Table of contents}
\vskip .2cm 
1. Introduction
\vskip .2cm
2. Basic properties of the transfer
\vskip .2cm
3. Review of \'etale and Betti realization
\vskip .2cm
4. Compatibility of the transfer with realizations
\vskip .2cm
5. Transfer and splittings in the motivic stable homotopy category
\vskip .2cm
6. Further details on \'etale realization
\input xypic
%\vfill \eject
\vskip .5cm
%%%%%%%%%%%%%%%%%%%%%%%%%%%%%%%%%%%%%%%%%%%%%%%%%%%%%%%%%%%%%%%%%%%%%
%%%%%%%%%%%%%%%%%%%%%%%%%%%%%%%%%%%%%%%%%%%%%%%%%%%%%%%%%%%%%%%%%%%%%
%% INTRODUCTION
%%%%%%%%%%%%%%%%%%%%%%%%%%%%%%%%%%%%%%%%%%%%%%%%%%%%%%%%%%%%%%%%%%%%%
\section{\bf Introduction}
This paper is a continuation of the paper \cite{CJ23-T1}, where the authors defined the Becker-Gottlieb transfer in the 
motivic and \'etale framework for the Borel construction. The goal of the present paper is to establish the basic properties of
 this transfer and then establish various splittings making use of it.  Therefore, we will adopt the basic framework and terminology of
 \cite[section 1]{CJ23-T1}. We will briefly recall these here.
 \vskip .2cm 
 {\it Basic assumptions on the base field}. 
\begin{enumerate}
\label{basic.assumpts.field}
 \item A {\it standing assumption throughout} is that the base field $k$ is a {perfect field of arbitrary characteristic.} \index{perfect field}
 \item When considering actions by linear algebraic groups $\rmG$ that are {\it not special}, we {\it will also assume the base field is  infinite} to prevent certain
unpleasant situations. 
\item On considering \'etale realizations of the transfer, it is important to assume
 that the base field \index{finite cohomological dimension}
 \be \begin{multline}
   \label{etale.finiteness.hyps}
    \begin{split}
 k \mbox{ has finite } \ell-\mbox{cohomological dimension,  for } \ell \ne char(k) \mbox{ and satisfies the finiteness conditions  that}\\
 	 \rmH^n_{et}(Spec \, {\it k}, {\mathbb Z}/\ell^{\nu}) \mbox{ is finitely generated in each degree } n \mbox{ and vanishes for all } n>> {\rm 0}, \mbox { all } \nu > 0. \\
 	 \mbox{(Such an assumption is not needed on dealing with the motivic transfer alone.)} 
   \end{split}
 \end{multline} \ee
 \vskip .1cm \noindent
One should be able to see that such an assumption is necessary to get any theory of Spanier-Whitehead duality on the \'etale site of $\Speck$.
\end{enumerate}
\vskip .2cm
{\it Basic assumptions on the linear algebraic groups} considered: 
\begin{enumerate}
\label{basic.assmpts.group}
 \item 
we allow any linear algebraic group over $k$, {\it irrespective} of whether 
 it is {\it connected or not} and 
\item
 we are { not assuming it is special in the sense of Grothendieck (see \cite{Ch}). This means, in particular,  we allow groups such
 as all orthogonal groups and finite groups, which are all known to be non-special.}
\end{enumerate}
 
\section{\bf Basic properties of the transfer}
We may start with a $\rmG$-torsor $\rmE \ra \rmB$, with both $\rmE$ and $\rmB$ smooth quasi-projective schemes over $\rmS$. 
We will further assume that $\rmB$ is {\it always connected}. 
\vskip .1cm
{\it The Borel construction}. 
Given a simplicial presheaf $\rmX$ with an action by $\rmG$, one forms the quotient $\rmE\times _{\rmG} \rmX$
of the product $\rmE \times \rmX$ by the diagonal action: here $\rmG$ acts on the right on $\rmE$ through the involution of $\rmG$ given by $g \mapsto g^{-1}$ and
on the left on $\rmX$ in the usual manner. The construction of such a quotient is the Borel construction and
it needs to be carried out carefully so that if $\rmX$ is a smooth scheme, one obtains the correct object. This construction is discussed
in detail in \cite[section 8.3]{CJ23-T1}.
\vskip .1cm
Let $\rmX$ and $\rmY$ denote two simplicial presheaves provided with $\rmG$-actions. We will consider the following three {\it basic contexts} for the transfer: \index{torsor} \index{The three basic contexts}
 \vskip .1cm
 (a) $\rmp: \rmE \ra \rmB$ is a $\rmG$-torsor for the action of a linear algebraic group $\rmG$ with both $\rmE$ and $\rmB$ smooth quasi-projective schemes over $k$, with $\rmB$ {\it connected} and 
 \[\pi_{\rmY}: \rmE \times_{\rmG}  (\rmY \times \rmX) \ra \rmE \times_{\rmG}  \rmY\]
 the induced map, where
 $\rmG$ acts diagonally on $\rmY \times \rmX$. One may observe that, on taking $\rmY= Spec \, \k$ with the trivial action of $\rmG$, the map $\pi_{\rmY}$ becomes $\pi:\rmE\times_{\rmG} \rmX \ra \rmB$ (the induced projection), which is an
 important special case.
\vskip .1cm 
(b) A basic example of such a $\rmG$-torsor is ${\EG}^{\it gm,m} \ra {\BG}^{\it gm,m}$, where ${\BG}^{\it gm,m}$ denotes the $m$-th degree approximation to the geometric classifying space of the linear algebraic group $\rmG$ as in \cite{MV} (see also \cite{Tot}),
$\rmp: {\EG}^{\it gm, m} \ra {\BG}^{\it gm, m}$ is the corresponding universal $\rmG$-torsor and 
\[\pi_{\rmY}: {\EG}^{\it gm, m}\times_{\rmG} (\rmY \times \rmX) \ra {\EG}^{\it gm, m}\times_{\rmG} \rmY\]
is the induced map.
\vskip .1cm
(c) If $ \rmp_m$ ($\pi_{\rmY, m}$) denotes the map denoted $\rmp$ ($\pi_{\rmY}$) in (b), here we let $\rmp = \colimm \rmp_m$ and let 
\[\pi_{\rmY} = \colimm \pi_{\rmY,m}: {\EG}^{\it gm}\times_{\rmG} (\rmY \times \rmX) =\colimm {\EG}^{\it gm, m}\times_{\rmG}(\rmY \times \rmX) \ra \colimm {\EG}^{\it gm, m}\times_{\rmG} \rmY = {\EG}^{\it gm}\times_{\rmG} \rmY.\]
Strictly speaking, the above definitions apply only to the case where $\rmG$ is { special}. %in the sense of Grothendieck (see \cite{Ch}) 
 When $\rmG$ is { not special}, the above objects will in fact need to be replaced by the derived push-forward of the above objects viewed as sheaves on the
big \'etale site of $k$ to the corresponding big Nisnevich site of $k$, as discussed in \cite[(8.3.6)]{CJ23-T1}.  {\it However, we will denote
these new objects also by the same notation throughout, except when it is necessary to distinguish between them}. Recall that, for $\rmG$ { not special}, we {\it will assume the base field is also infinite} to prevent certain
unpleasant situations. 
\vskip .2cm
Throughout the following discussion, $\cE^{\rmG}$ will denote any one of the $\rmG$-equivariant ring spectra considered in \cite[(4.0.24)]{CJ23-T1}, with $\cE$ denoting the 
 corresponding non-equivariant spectrum: see \cite[Definition 4.13, (4.0.29)]{CJ23-T1}.
\begin{definition} 
\label{weak.ring.modules}
(Weak ring and module spectra over commutative ring spectra) 
Let $\rmA$ denote a spectrum in $\Spt(\k_{\rm mot}, \cE)$ ($\Spt(\k_{et}, \epsilon^*(\cE))$). Then we call $\rmA$ a {\it weak ring spectrum}
if there is given a pairing 
\[\mu: \rmA \wedge \rmA \ra \rmA\]
in $\Spt(\k_{\rm mot}, \cE)$ ($\Spt(\k_{et}, \epsilon^*(\cE))$) that is
homotopy associative. A spectrum $\rmM \in \Spt(\k_{\rm mot}, \cE)$ ($\Spt(\k_{et}, \epsilon^*(\cE))$) is then called a {\it a weak module spectrum} over $\rmA$
if it comes equipped with a pairing $\rmA \wedge \rmM \ra \rmM$ that is homotopy associative in the sense that the obvious square involving $\mu$
and the last pairing homotopy commutes.
 \end{definition} \index{weak ring spectrum} \index{weak module spectrum}
\vskip .2cm
Then we recall from \cite[Definition 8.8, (8.4.18), (8.4.19) and (8.4.20)]{CJ23-T1} the transfer map is defined as follows. 
Let $\rmf: \rmX \ra \rmX$ denote
a ${\group}$-equivariant map and let $\pi_{\rmY}: \rmE\times _{\rm G}(\rmY \times \rmX) \ra \rmE\times _{\rm G}\rmY$ denote any one of the maps considered in {\rm (a)} through {\rm (c)} above. Let $\rmf_{\rmY} = id_{\rmY} \times \rmf:\rmY \times \rmX \ra \rmY \times \rmX$ denote
 the induced map. 
\vskip .2cm
Then in case {\rm (a)} and when the group $\rmG$ is special, we obtain a map (called {\it the transfer })
\[\tr(\rmf_{\rmY}):{\Sigma^{\infty}_{\T}}(\rmE\times _{\rm G}\rmY)_+ \ra {\Sigma^{\infty}_{\T}} (\rmE\times _G({\rmY}\times \rmX))_+\quad  (\tr(\rmf_{\rmY}): \cE \wedge (\rmE\times _{\rm G}\rmY)_+ \ra \cE \wedge (\rmE\times _G({\rmY}\times \rmX))_+ )\]
in ${\SH}(k)$ ($\SH(k, {\cE})$, \res) if ${\Sigma^{\infty}_{\T}}\rmX_+$ is dualizable in ${\SH}(k)$ 
(if $ \cE \wedge \rmX_+$ is dualizable in $\SH(k, {\cE})$, \res). In case {\rm (a)} and when $\rmG$ is not special, the corresponding transfer is the map
\[\tr(\rmf_{\rmY}):  \rmR\epsilon_*(\epsilon^*\mbS_{\k}) \wedge  \rmR\epsilon_*({\widetilde {\rmE}}\times_{\rmG}^{et} \rmY)_+  \ra \rmR\epsilon_*(\epsilon^*{\mbS_{\k}}) \wedge \rmR\epsilon_*({\widetilde {\rmE}}\times_{\rmG}^{et}(\rmY \times  \rmX))_+ \]
\[(\tr(\rmf_{\rmY}):  \rmR\epsilon_*(\epsilon^*{\cE}) \wedge \rmR\epsilon_*({\widetilde {{\rmE}}}\times_{\rmG}^{et} \rmY)_+ \ra  \rmR\epsilon_*(\epsilon^*{\cE}) \wedge \rmR\epsilon_*({\widetilde {{\rmE}}}\times_{\rmG}^{et} (\rmY \times \rmX))_+, \res).\]
\begin{definition}
 \label{coh.not}
 Let $\rmA$ denote a weak ring spectrum in $\Spt(\k_{\rm mot}, \cE)$, and let $\rmM$ denote a weak module spectrum over $\rmA$. When $\rmG$ is special, and $\rmZ$ is a simplicial presheaf with $\rmG$-action,
 we will let $\rmh^{*, \bullet}(\rmE\times _{\rm G}\rmZ, \rmM)$ denote the generalized motivic cohomology of $\rmE \times_{\rmG}\rmZ$ with 
 respect to the motivic spectrum $\rmM$. 
 \vskip .1cm
When $\rmG$ is non-special and $\rmZ$ is a simplicial presheaf with $\rmG$-action, we will let 
 \[\rmh^{i, j}(\rmE\times _{\rm G}\rmZ, \rmM) = [\rmR\epsilon_*(\epsilon^*(\cE)) \wedge \rmR\epsilon_*(\rmE\times _{\rm G}\rmZ)_+, \rmM(j)[i]], \]
and where $[\quad, \quad ]$ denotes hom in $\SH(\k, \cE)$.
\end{definition}

\begin{theorem}
\label{thm.1}
The transfer has the following properties.
\begin{enumerate}[\rm(i)]
\item If $\tr(\rmf_{\rmY})^m:{\Sigma^{\infty}_{\T}} (\EG^{\it gm, m}\times_{\rmG}\rmY) _+ \ra {\Sigma^{\infty}_{\T}} (\EG^{\it gm,m}\times_{\rmG}(\rmY \times \rmX))_+\quad  (\tr(\rmf_{\rmY})^m: \cE \wedge (\EG^{\it gm, m}\times_{\rmG}\rmY) _+ \ra \cE \wedge (\EG^{\it gm,m}\times_{\rmG}(\rmY \times \rmX))_+ )$
denotes the corresponding transfer maps in case {\rm (b)}, the maps $\{\tr(\rmf_{\rmY})^m|m\}$ are compatible as $m$ varies. The corresponding induced map $\colimm \, \tr(\rmf_{\rmY})^m$ will be denoted $\tr(\rmf_{\rmY})$.
\vskip .1cm 
For items {\rm (ii)} through {\rm (v)} we will assume that $\rmA$ is a weak ring spectrum in $\Spt(\k_{\rm mot}, \cE)$ and 
that $\rmM$ is a weak module spectrum over $\rmA$. Moreover, for items {\rm (ii)} through {\rm (iv)} we may assume 
any one of the above contexts {\rm (a)} through {\rm (c)}.
\vskip .1cm
\item If $\rmh^{*, \bullet}(\quad, \rmA)$ $(\rmh^{*, \bullet}( \quad, \rmM)$) denotes the generalized motivic cohomology theory defined as in Definition ~\ref{coh.not} with respect to the weak ring spectrum $\rmA$  (the weak module spectrum $\rmM$ over $\rmA$, \res) 
 then,
 \[\tr(\rmf_{\rmY})^*(\pi_{\rmY}^*(\alpha). \beta) = \alpha .\tr(\rmf_{\rmY})^*(\beta), \quad \alpha \in \rmh^{*, \bullet}(\rmE\times _{\rmG}\rmY, \rmM), \beta \in \rmh^{*, \bullet}(\rmE\times _{\rmG}(\rmY \times \rmX), \rmA).\]
Here $\tr(\rmf_{\rmY})^*$ ($\pi_{\rmY}^*$) denotes the map induced on generalized cohomology by the map $\tr(\rmf_{\rmY})$ ($\pi_{\rmY}$, \res). Both $\tr(\rmf_{\rmY})^*$ and $\pi_{\rmY}^*$ preserve the degree as well as the weight.
\item
The composition $\tr(\rmf_{\rmY})^* \circ \pi_{\rmY}^*:\rmh^{*, \bullet}(\rmE\times _{\rmG}\rmY, \rmA) \ra \rmh^{*, \bullet}(\rmE\times _{\rmG}\rmY, \rmA)$ is an {\it isomorphism} if and only if
$\tr(\rmf_{\rmY})^*(1) = \tr(\rmf_{\rmY})^*(\pi_{\rmY}^*(1))$ is a unit in $\rmh^{0, 0}(\rmE\times _{\rmG}\rmY, \rmA)$,  where 
$1 \in \rmh^{0, 0}(\rmE\times _{\rmG}\rmY, \rmA)$ is the unit of the graded ring  $\rmh^{*, \bullet}(\rmE\times _{\rmG}\rmY, \rmA)$.
\vskip .1cm
In particular, $\pi_{\rmY}^*:\rmh^{*, \bullet}(\rmE\times _{\rmG}\rmY, \rmM) \ra \rmh^{*, \bullet}(\rmE\times _{\rmG}(\rmY \times \rmX) , \rmM)$ is split injective
if $\tr(\rmf_{\rmY})^*(1)= \tr(\rmf_{\rmY})^*(\pi_{\rmY}^*(1))$ is a unit, where $1 \in \rmh^{0, 0}(\rmE\times _{\rmG}\rmY, \rmA)$ is again the unit of the graded ring  $\rmh^{*, \bullet}(\rmE\times _{\rmG}\rmY, \rmA)$.
\item 
The transfer $\tr(\rmf_{\rmY})$ is compatible with  respect to restriction to subgroups of a given group. It is also compatible with
respect to change of base fields. \footnote{We prove in Theorem ~\ref{base.change} that 
the transfer is compatible with a more general base-change which includes this as a special case.}
\item
The map $ {\tr(\rmf_{\rmY})}^*:  \rmh^{*, \bullet}(\EG^{\it gm}\times _{\rmG}(\rmY \times \rmX), \rmM) \ra  { \rm h}^{*, \bullet}(\EG^{\it gm}\times _{\rmG}\rmY, \rmM)$ is independent of the choice of a geometric classifying space that satisfies certain basic assumptions (as in ~\ref{indep.class.sp}: Proof of Theorem ~\ref{thm.1}), and depends only on $\rmX$, $\rmY$ and the $\group$-equivariant map $\rmf$.
\item
Assume the base field $k$ satisfies the finiteness conditions in ~\eqref{etale.finiteness.hyps}. Assume
$\cE$ (which belongs to $\Spt(\k_{\rm mot})$) is  $\ell$-complete, in the sense of Definition ~\ref{Zl.local}, 
for some prime $\ell \ne char(\k)$.  Let $\epsilon^*: \Spt(\k_{\rm mot}) \ra \Spt(\k_{et})$
denote the functor induced by the obvious map sites from the \'etale site of $k$ to the Nisnevich site of $k$. 
\vskip .1cm
If $ \epsilon^*(\cE \wedge \rmX_+)$ is dualizable in $\SH(\k_{et}, \epsilon^*(\cE))$, 
and $\rmA$ is a weak ring spectrum in $\Spt(\k_{et}, \epsilon^*(\cE))$ with $\rmM$ a weak-module spectrum over $\rmA$, then there exists a transfer 
$\tr(\rmf_{\rmY})$ in $\SH(\k_{et}, \epsilon^*(\cE))$ satisfying similar properties.
\item 
Assume the base field $k$ satisfies the finiteness conditions in  ~\eqref{etale.finiteness.hyps}. Let $\cE$ denote a  commutative ring spectrum
in $\Spt(\k_{\rm mot})$ which is $\ell$-complete. Then if $\cE \wedge \rmX_+$ 
 is dualizable in $\SH(\k, \cE)$, $\epsilon^*(\cE \wedge \rmX_+)$ is 
dualizable in $\SH(\k_{et}, \epsilon^*(\cE))$, 
the transfer map $\tr(\rmf_{\rmY})$ is compatible with \'etale realizations, and for groups $\rmG$ that are special, $\epsilon^*(\tr(\rmf_{\rmY})) = \tr(\epsilon^*(\rmf_{\rmY}))$.
\end{enumerate}
\end{theorem}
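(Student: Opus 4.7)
\medskip

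\noindent\textbf{Proof plan for item (vii).}
The whole statement turns on one underlying principle: the functor $\epsilon^*\colon \Spt(\k_{\rm mot}) \to \Spt(\k_{et})$ induced by the change-of-sites map from the Nisnevich to the \'etale site is a strong symmetric monoidal left Quillen functor, which therefore descends to a symmetric monoidal functor on the associated stable homotopy categories (compatibly with the module-category structure over a commutative ring spectrum $\cE$, so that it induces $\SH(\k,\cE) \to \SH(\k_{et}, \epsilon^*(\cE))$). Any strong symmetric monoidal functor automatically preserves dualizability data: if $(D, \mathrm{ev}, \mathrm{coev})$ exhibits $\cE \wedge \rmX_+$ as dualizable in $\SH(\k,\cE)$, then $(\epsilon^* D, \epsilon^*\mathrm{ev}, \epsilon^*\mathrm{coev})$ exhibits $\epsilon^*(\cE \wedge \rmX_+)$ as dualizable in $\SH(\k_{et}, \epsilon^*(\cE))$, which settles the first assertion.

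For the equality $\epsilon^*(\tr(\rmf_{\rmY})) = \tr(\epsilon^*(\rmf_{\rmY}))$ in the special case, I would unwind the explicit construction of the transfer in \cite[Def.~8.8 and (8.4.18)]{CJ23-T1} as a Becker--Gottlieb trace: a composition built out of the coevaluation of the duality data for $\cE\wedge \rmX_+$, the diagonal of the relevant Borel construction, the induced self-map $(\rmf_{\rmY})_*$, and the evaluation. Since $\epsilon^*$ is strong symmetric monoidal, it commutes termwise with each of these structural ingredients, and by the preservation of duality data above it sends the defining composite for $\tr(\rmf_{\rmY})$ to the composite defining $\tr(\epsilon^*(\rmf_{\rmY}))$. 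This gives the equality as maps in $\SH(\k_{et}, \epsilon^*(\cE))$, and in particular the compatibility of the transfer with \'etale realization in the special-group setting.

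For the general (non-special) case, the motivic objects $\rmE \times_\rmG \rmZ$ appearing in the transfer are really the derived pushforwards $\rmR\epsilon_*$ of their \'etale counterparts $\widetilde{\rmE} \times^{et}_\rmG \rmZ$, and the transfer is constructed in $\SH(\k,\cE)$ as a map between such pushforwards. Applying $\epsilon^*$ and invoking the counit $\epsilon^*\rmR\epsilon_* \to \mathrm{id}$, I would verify that under the $\ell$-completeness hypothesis on $\cE$ and the finiteness conditions~\eqref{etale.finiteness.hyps} on $k$, this counit is a weak equivalence on each of the $\ell$-completed objects in sight; consequently $\epsilon^*$ of the motivic transfer recovers the \'etale Spanier--Whitehead trace of $\epsilon^*(\rmf_{\rmY})$ up to canonical weak equivalence, giving the desired compatibility with \'etale realization.

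The main obstacle will be precisely this last step: tracking the interaction of $\epsilon^*$ with $\rmR\epsilon_*$ in the non-special case. The dualizability preservation and the monoidal naturality of the trace are formal, but controlling $\epsilon^* \rmR\epsilon_*$ on the Borel construction requires genuine use of the hypotheses --- the $\ell$-cohomological-dimension and degreewise-finiteness assumptions on $k$ from~\eqref{etale.finiteness.hyps} together with $\ell$-completeness of $\cE$ --- to guarantee that the relevant Nisnevich and \'etale cohomology sheaves agree after $\ell$-completion, so that the counit is an equivalence on the objects being transferred across.
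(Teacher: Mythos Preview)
Your treatment of the dualizability preservation and of the special-group case is correct and matches the paper. In Proposition~\ref{transf.real} the paper itself remarks that the abstract Dold--Puppe principle---a strong symmetric monoidal functor preserves duality data and hence traces---already yields the result, but then opts to write out explicit commutative diagrams for the pre-transfer; Corollary~\ref{transf.compat} then passes from the pre-transfer to the full transfer by checking that each step of the construction in \cite[Definition~8.8 and (8.4.3)--(8.4.17)]{CJ23-T1} pulls back along $\epsilon^*$. Your plan is the same argument, phrased at a higher level of abstraction.

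The gap is in your non-special strategy. You propose to show that the counit $\epsilon^*\rmR\epsilon_* \to \mathrm{id}$ is a weak equivalence on the relevant $\ell$-complete objects, but the hypotheses of the theorem do not furnish this: $\ell$-completeness of $\cE$ together with the finiteness conditions~\eqref{etale.finiteness.hyps} on $k$ do not imply \'etale descent for $\cE$-module spectra, and already for $\epsilon^*\cE$ itself the counit $\epsilon^*\rmR\epsilon_*\epsilon^*\cE \to \epsilon^*\cE$ need not be an equivalence (by the triangle identity that would force the unit $\cE \to \rmR\epsilon_*\epsilon^*\cE$ to be an equivalence after $\epsilon^*$, i.e.\ a form of \'etale descent, which fails for a general $\ell$-complete motivic ring spectrum). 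Note that the statement of (vii) is deliberately asymmetric on this point: the explicit equality $\epsilon^*(\tr(\rmf_{\rmY})) = \tr(\epsilon^*(\rmf_{\rmY}))$ is asserted \emph{only} for special $\rmG$, and the paper's proof (Corollary~\ref{transf.compat}) is correspondingly restricted to that case. For non-special $\rmG$ the claim is only the looser ``compatible with \'etale realizations''; you should not try to promote it to the full equality via the counit argument, because that route does not go through under the stated hypotheses.
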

The above theorem is proven by first establishing various key properties of the transfer as in the following theorems. We will adopt the terminology and notation as in \cite[Terminology 8.5]{CJ23-T1}.
%Throughout this section (and for the remainder of the chapter) we will adopt the notational conventions in section 
%~\ref{notation.approx}.
\begin{proposition} (Naturality with respect to base-change and change of groups) \index{transfer and base-change} \index{transfer and change of groups}
 \label{base.change}
Let $\rmG$ denote a linear algebraic group over $\k$ and let $\rmX$, $\rmY$ denote smooth quasi-projective $\group$-schemes over $\k$ or unpointed simplicial presheaves on $\Sm_{\k}$ provided with  $\rmG$-actions. 
Let $\rmp: \rmE \ra \rmB$ denote a $\rmG$-torsor with $\rmE$ and $\rmB$ smooth quasi-projective schemes over $\k$, with $\rmB$ connected and
let  $\rmf: \rmX \ra \rmX$ denote a $\rmG$-equivariant map.
\vskip .1cm
Let $\rmG'$ denote a closed linear algebraic subgroup of $\rmG$, $\rmp': \rmE' \ra \rmB'$ a $\rmG'$-torsor with $\rmB'$ connected, and $\rmY'$ a $\rmG'$-quasi-projective scheme
over $\k$ or an unpointed simplicial sheaf $\Sm_{\k}$ provided with a $\rmG'$-action, so that it comes equipped with a
map $\rmY' \ra \rmY$ that is compatible with the $\rmG'$-action on $\rmY'$ and the $\rmG$-action on $\rmY$. Further, we assume that one 
is provided with a commutative square
\[\xymatrix{{\rmE'} \ar@<1ex>[r] \ar@<1ex>[d] & {\rmE} \ar@<1ex>[d]\\
            {\rmB'} \ar@<1ex>[r]& {\rmB}}
\]
compatible with the action of $\rmG'$ ($\rmG$) on $\rmE'$ ($\rmE$, \res).  
\vskip .1cm
\begin{enumerate}[\rm(i)]
 \item 
Then if ${\Sigma^{\infty}_{\T}} \rmX_+$ is dualizable in $\SH(\k)$, the  square
\[\xymatrix{{{\Sigma^{\infty}_{\T}} (\rmE'\times_{\rmG'}(\rmY' \times \rmX))_+} \ar@<1ex>[r] & {{\Sigma^{\infty}} _{\T}(\rmE{\underset {\rmG}  \times} (\rmY \times \rmX))_+} \\
{{\Sigma^{\infty}_{\T}} (\rmE'\times_{\rmG'}\rmY' )_+} \ar@<1ex>[u]^{\tr(\rmf_{\rmY'})} \ar@<1ex>[r] & {{\Sigma^{\infty}_{\T}} (\rmE\times_{\rmG}\rmY )_+}  \ar@<-1ex>[u]^{\tr(\rmf_{\rmY})}} \]

\[(\xymatrix{{\rmR\epsilon_*(\epsilon^*{\Sigma^{\infty}_{\T}}) \wedge \rmR \epsilon_* (\rmE'\times_{\rmG'}^{et} (\rmY' \times \rmX))_+} \ar@<1ex>[r] & {\rmR\epsilon_*(\epsilon^*{\Sigma^{\infty}_{\T}}) \wedge \rmR \epsilon_* (\rmE\times_{\rmG}^{et}(\rmY \times \rmX))_+}\\
  {\rmR\epsilon_*(\epsilon^*{\Sigma^{\infty}_{\T}}) \wedge \rmR \epsilon_* (\rmE'\times_{\rmG'}^{et} \rmY')_+} \ar@<1ex>[u]^{\tr(\rmf_{\rmY'})} \ar@<1ex>[r] & {\rmR\epsilon_*(\epsilon^*{\Sigma^{\infty}_{\T}}) \wedge \rmR\epsilon_* (\rmE\times_{\rmG}^{et}\rmY)_+} \ar@<1ex>[u]^{\tr(\rmf_{\rmY})} )}\]
\vskip .2cm \noindent
commutes in the motivic stable homotopy category when $\rmG$ is special ($\rmG$ is not necessarily special, \res). 
\item
Next let $\cE$ denote a commutative ring spectrum in $\Spt(\k_{\rm mot})$  with $\cE \wedge X_+$ dualizable in $\SH(\k_{\rm mot}, \cE)$.  Then the square 
\[\xymatrix{{\cE \wedge (\rmE'\times_{\rmG'}(\rmY' \times \rmX))_+} \ar@<1ex>[r] & {\cE \wedge (\rmE \times_{\rmG} (\rmY \times \rmX))_+}  \\
{\cE \wedge (\rmE'\times_{\rmG'}\rmY' )_+} \ar@<1ex>[u]^{\tr(\rmf_{\rmY'})} \ar@<1ex>[r] & {\cE \wedge (\rmE\times_{\rmG}\rmY )_+}  \ar@<-1ex>[u]^{\tr(\rmf_{\rmY})} }\]

\[\xymatrix{({\rmR\epsilon_*(\epsilon^*\cE) \wedge \rmR\epsilon_*(\rmE'\times_{\rmG'}^{et} (\rmY' \times \rmX))_+} \ar@<1ex>[r] & { \rmR\epsilon_*(\epsilon^*\cE) \wedge \rmR\epsilon_*(\rmE\times_{\rmG}^{et}(\rmY \times \rmX))_+}\\
  { \rmR\epsilon_*(\epsilon^*\cE) \wedge \rmR\epsilon_*(\rmE'\times_{\rmG'}^{et} \rmY')_+} \ar@<1ex>[u]^{\tr(\rmf_{\rmY'})} \ar@<1ex>[r] & {\rmR\epsilon_*(\epsilon^* \cE) \wedge \rmR\epsilon_*(\rmE\times_{\rmG}^{et} \rmY)_+} \ar@<1ex>[u]^{\tr(\rmf_{\rmY})} )}\]

\vskip .2cm \noindent
commutes in the motivic stable homotopy category $\SH(\k, {\cE})$ when $\rmG$ is special ($\rmG$ is not necessarily special, \res). (In this case we may require 
$\ell$ is a prime
$ \ne char(\k)$ so that $\cE$ is $Z_{(\ell)}$-local or that $\cE$ is $\ell$-complete.)
\item
In case $\cE$ denotes a commutative ring spectrum in $\Spt(\k_{et})$ which is $\ell$-complete for some prime $\ell$ and $\cE \wedge \rmX_+$ is dualizable in $\Spt(\k_{et}, \cE)$,
one obtains a homotopy commutative diagram as in the first square in (ii) (with all the objects there replaced by their pull-backs to the big \'etale site) 
in the corresponding \'etale stable homotopy category $\SH(\k_{et}, \cE)$.
\end{enumerate}
\end{proposition}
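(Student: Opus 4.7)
The plan is to unwind the definition of the transfer from \cite[Definition 8.8, (8.4.18)--(8.4.20)]{CJ23-T1} and verify naturality of each constituent piece separately. Recall that $\tr(\rmf_{\rmY})$ is produced by twisting a pre-transfer on the dualizable object $\Sigma^{\infty}_{\T}\rmX_+$ (or $\cE \wedge \rmX_+$) by the Borel construction $\rmE \times _{\rmG} (-)$. The pre-transfer is built from the coevaluation $\eta: \mbS \to \oD(\Sigma^{\infty}_{\T}\rmX_+) \wedge \Sigma^{\infty}_{\T}\rmX_+$, a twist by $\rmf$, and an evaluation-like map, and hence depends only on $(\rmX,\rmf)$ together with the ambient symmetric monoidal stable homotopy category. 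The Borel twist, by contrast, depends only on the torsor $\rmp: \rmE \to \rmB$ and the group $\rmG$ acting on $\rmY \times \rmX$. Because $\rmX$ and $\rmf$ are unchanged under the given base-change data, checking commutativity reduces to checking that the squares of Borel constructions
\[
\xymatrix{
{\rmE' \times_{\rmG'}(\rmY' \times \rmX)} \ar[r] \ar[d] & {\rmE \times_{\rmG}(\rmY \times \rmX)} \ar[d]\\
{\rmE'\times_{\rmG'}\rmY'} \ar[r] & {\rmE \times_{\rmG}\rmY}
}
\]
are natural for the smash factorizations used in the pre-transfer twist. This naturality is tautological from the construction of the Borel quotient in \cite[Section 8.3]{CJ23-T1}.

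For part (i) with $\rmG$ special, I would proceed in two steps. First, record that under the hypotheses, $\Sigma^{\infty}_{\T}\rmX_+$ is dualizable in $\SH(\k)$, so the pre-transfer exists canonically and is natural in $(\rmX,\rmf)$. Second, smash the pre-transfer with $\Sigma^{\infty}_{\T}(\rmE \times_{\rmG}\rmY)_+$ and use the projection-formula identification of $\Sigma^{\infty}_{\T}(\rmE\times_{\rmG}(\rmY \times \rmX))_+$ as the appropriate twist of $\Sigma^{\infty}_{\T}(\rmE\times_{\rmG}\rmY)_+$ by $\Sigma^{\infty}_{\T}\rmX_+$. Under the map induced by $(\rmE' \to \rmE,\; \rmB' \to \rmB,\; \rmG' \hookrightarrow \rmG,\; \rmY' \to \rmY)$, both identifications go across compatibly, which yields the desired commuting square. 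For the non-special case of (i), replace each Borel construction by the derived pushforward $\rmR\epsilon_*\epsilon^*(-)$ as in \cite[(8.3.6)]{CJ23-T1}. Since $\epsilon^*$ is symmetric monoidal and $\rmR\epsilon_*$ is lax symmetric monoidal, and since \'etale-locally every linear algebraic group torsor trivializes (so the pre-transfer construction goes through after $\epsilon^*$), the special-case argument carries out on the \'etale site and is then transported to the Nisnevich site by $\rmR\epsilon_*$; the required naturality square is obtained by applying $\rmR\epsilon_*$ to a square of \'etale Borel constructions.

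Parts (ii) and (iii) require almost no additional work. For (ii) we smash throughout with the commutative ring spectrum $\cE$; since $\cE$ is commutative, its module category is symmetric monoidal, dualizability of $\cE \wedge \rmX_+$ gives an $\cE$-linear pre-transfer, and the Borel twist is $\cE$-linear functorially, so the argument of (i) applies verbatim in $\SH(\k,\cE)$ in both the special and non-special cases. For (iii) we work entirely on the \'etale site from the outset: the $\ell$-completeness of $\cE$ and the dualizability of $\cE \wedge \rmX_+$ in $\SH(\k_{et},\cE)$ give the \'etale pre-transfer directly, with no need for $\rmR\epsilon_*$, and the same naturality diagram of Borel constructions (now formed on the \'etale site) delivers the commutative square.

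The main obstacle is the non-special case of (i). One must be careful that the derived pushforward $\rmR\epsilon_*$ is compatible with the smash products appearing in the pre-transfer construction and with the map of \'etale Borel constructions induced by $(\rmE' \to \rmE,\; \rmG' \hookrightarrow \rmG,\; \rmY' \to \rmY)$. This compatibility amounts to lax monoidality of $\rmR\epsilon_*$ together with the \'etale-local triviality of torsors, and the verification is essentially built into the conventions of \cite[Section 8.3]{CJ23-T1}; once these are invoked, no new technical ingredients are needed and the square commutes for the same formal reason as in the special case.
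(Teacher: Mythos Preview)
Your overall strategy matches the paper's: unwind the construction of $\tr(\rmf_{\rmY})$ from \cite{CJ23-T1} and check that each constituent is natural in the data $(\rmE,\rmG,\rmY) \to (\rmE',\rmG',\rmY')$. The paper's proof is equally terse and makes exactly this point: the diagrams \cite[(8.4.3)--(8.4.17)]{CJ23-T1} for $\widetilde{\rmE}'\times_{\rmG'}\rmY'$ map to those for $\widetilde{\rmE}\times_{\rmG}\rmY$, since both transfers are assembled from the same pre-transfer $\tr'_{\rmG}(\rmf)$ on $\rmX$.

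However, your description of the construction is inaccurate in a way that hides the one nontrivial step. You write that one should ``smash the pre-transfer with $\Sigma^{\infty}_{\T}(\rmE\times_{\rmG}\rmY)_+$ and use the projection-formula identification''. That is not how the transfer is built in \cite[8.4]{CJ23-T1}: the construction passes through Thom spaces of auxiliary vector bundles $\xi^{\rmV}$ on $\widetilde{\rmB}$ (one for each representation $\rmV$ of $\rmG$) together with chosen complementary bundles $\eta^{\rmV}$ with $\xi^{\rmV}\oplus\eta^{\rmV}$ trivial. The only point in the proof that is not pure functoriality is that these \emph{choices} are compatible under base-change: one must take ${\xi'}^{\rmV}$ and ${\eta'}^{\rmV}$ on $\widetilde{\rmB}'$ to be the pull-backs of $\xi^{\rmV}$, $\eta^{\rmV}$, and observe that ${\xi'}^{\rmV}\oplus{\eta'}^{\rmV}$ is then again trivial. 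This is exactly the observation the paper singles out, and without it the various Thom-space identifications in \cite[(8.4.3)--(8.4.17)]{CJ23-T1} for the primed and unprimed data have no reason to match up. Once you make this choice explicit, the rest of your outline (including the reduction of the non-special case via $\rmR\epsilon_*$ and the passage to $\cE$-modules for (ii), (iii)) goes through as you describe.
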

\begin{proof} We will discuss this explicitly only in the case $\rmG$ is special, since the other case follows along similar lines with the obvious modifications. For each fixed representation $\rmV$ of $\rmG$, let $\xi^{\rmV}$ ($\eta^{\rmV}$) denote the vector bundles on $\widetilde {\rmB}$ chosen as in \cite[8.4, Step 2]{CJ23-T1}  (\cite[(8.4.7)]{CJ23-T1}, \res). Let
${\xi'}^{\rmV}$ (${\eta'}^{\rmV}$) denote the pull-back of these bundles to $\widetilde {\rmB}'$. Since $\xi^{\rmV} \oplus \eta^{\rmV}$ is trivial,
so is ${\xi'}^{\rmV} \oplus {\eta'}^{\rmV}$. Now the required property follows readily in view of this observation and the definition of the
transfer as in \cite[Definition 8.8]{CJ23-T1}, in view of the cartesian square, where $\pi_{\rmY}$ ($\pi_{\rmY'}$) is induced by the
projection $\rmY \times \rmX \ra \rmY$ ($\rmY' \times \rmX \ra \rmY'$, \res):
\be \begin{equation}
  \label{base.change.0}
 \xymatrix{{\rmE'\times_{\rmG'}(\rmY' \times \rmX)} \ar@<1ex>[r] \ar@<1ex>[d]^{\pi_{\rmY'}}& {\rmE\times_{\rmG}(\rmY \times \rmX)} \ar@<1ex>[d]^{\pi_{\rmY}}\\
             {\rmE'\times_{\rmG'}\rmY'} \ar@<1ex>[r] & {\rmE\times_{\rmG}\rmY}.}
\end{equation} \ee
The main observation here is that the diagrams in \cite[(8.4.3)]{CJ23-T1} through \cite[(8.4.17)]{CJ23-T1} for  ${\widetilde {\rmE}}'\times_{\rmG '}\rmY'$ map to the corresponding diagrams for ${\widetilde {\rmE}}\times_{\rmG}\rmY$ making the resulting diagrams commute, since
 all the transfers are constructed from the pre-transfers $\tr_{\rmG}(\rmf)'$ and $\tr_{\rmG'}(\rmf)'$.
\end{proof}
\begin{remark}
\label{change.groups}
Taking different choices for $\rmp'$ and $\rmY'$ provides many examples where the last Proposition applies. 
For example, let $\rmB' \ra \rmB$
denote a map from another smooth quasi-projective scheme that is also connected, let $\rmE' = \rmE{\underset {\rmB} \times} \rmB'$ and let $\pi_{\rmY}': \rmE'\times_{\rmG} (\rmY \times \rmX) \ra \rmE'\times_{\rmG} \rmY$ denote the
 induced map: this corresponds to base-change. (In particular, $\rmB'$ could be given by $\rmE/\rmH \cong \rmE\times_{\rmG} \rmG/\rmH$ for a closed subgroup $\rmH$ so that $\rmE/\rmH$ is connected.)
Moreover, the above proposition readily provides the following key multiplicative property of the transfer. 
\end{remark}
\begin{proposition} (Multiplicative property)
 \label{multiplicative.prop.0} 
\begin{enumerate}[\rm(i)]
\item   
Let $\rmH$ denote a linear algebraic group. 
%and let ${\group}= \rmH \times \rmH$ with $i = \Delta$ = the diagonal imbedding of $\rmH$ in ${\group}= \rmH \times \rmH$.
Let $\rmX$, $\rmY$ denote smooth quasi-projective schemes over $\k$ or unpointed simplicial presheaves on $\Sm_{\k}$ provided with  $\rmH$-actions, so that ${\Sigma^{\infty}_{\T}} \rmX_+$ is a dualizable
in $\SH(\k)$.
Let $\rmp: \rmE \ra \rmB$ denote an $\rmH$-torsor and let $\pi_{\rmY}:\rmE{\underset {\rmH} \times} \rmX \ra \rmB$ denote the induced map. Then the diagram 
\[\xymatrix{{{\Sigma^{\infty}_{\T}} \rmE\times _{\rmH}(\rmY \times \rmX)_+} \ar@<1ex>[r]^(.4){\rmd} & {{\Sigma^{\infty}_{\T}} (\rmE \times \rmE){\underset {\rmH \times \rmH} \times} ((\rmY \times \rmX) \times (\rmY \times \rmX))_+} \ar@<1ex>[r]^{{\rm id} \wedge {\rmq} \wedge {\rm id}} & {{\Sigma^{\infty}_{\T}} (\rmE \times \rmE){\underset {\rmH \times \rmH} \times}(\rmY \times(\rmY \times \rmX))_+} \\
{{\Sigma^{\infty}_{\T}} \rmE\times _{\rmH}\rmY_+} \ar@<1ex>[u]^{\tr(\rmf_{\rmY})} \ar@<1ex>[rr]^{\rmd} && {{\Sigma^{\infty}_{\T}} (\rmE\times _{\rmH}\rmY) _+ \wedge (\rmE\times _{\rmH}\rmY )_+} \ar@<1ex>[u]^{{\rm id} \wedge \tr(\rmf_{\rmY})}}\]
\vskip .2cm \noindent
commutes in $\SH(\k)$, when $\rmH$ is special. Here $\rmd$ denotes the  diagonal map induced by the diagonal map 
$\rmY \times \rmX \ra (\rmY \times \rmX) \times (\rmY \times \rmX)$ and  $\rmq$ denotes the map induced by the projection  $\rmY \times \rmX  \ra \rmY$. In case $\rmH$ is not necessarily special, one obtains
 a corresponding commutative diagram which is  obtained by replacing any term of the form $\Sigma^{\infty}_{\rmT}\rmE{\underset {\rmH} \times} \rmZ_+$ with
$ \rmR\epsilon_*(\epsilon^*\mbS_{\k}) \wedge \rmR\epsilon_*(\rmE{\underset {\rmH} \times}^{et} \epsilon^*(\rmZ))_+$.
 
\item  
In case $\cE$ is a commutative ring spectrum in $\Spt(\k_{\rm mot})$  with $\cE \wedge \rmX_+$ dualizable in $\SH(\k_{\rm mot}, \cE)$ and $\rmX$ is as in (i), the square
\[\xymatrix{{\cE \wedge \rmE\times _{\rmH}(\rmY \times \rmX)_+} \ar@<1ex>[r]^(.4){\rmd} & {\cE \wedge (\rmE \times \rmE){\underset {\rmH \times \rmH} \times} ((\rmY \times \rmX) \times (\rmY \times \rmX))_+} \ar@<1ex>[r]^{{\rm id} \wedge {\rmq} \wedge {\rm id}} & {\cE \wedge (\rmE \times \rmE){\underset {\rmH \times \rmH} \times}(\rmY \times(\rmY \times \rmX))_+} \\
{\cE \wedge \rmE\times _{\rmH}\rmY_+} \ar@<1ex>[u]^{\tr(\rmf_{\rmY})} \ar@<1ex>[rr]^{\rmd} && {\cE \wedge (\rmE\times _{\rmH}\rmY) _+ \wedge (\rmE\times _{\rmH}\rmY )_+} \ar@<1ex>[u]^{{\rm id} \wedge \tr(\rmf_{\rmY})}}\]
\vskip .2cm \noindent
also commutes $\SH(\k_{\rm mot}, {\cE})$,  when $\rmH$ is special. (Here we may also require that 
$\ell$ is a prime $ \ne char(\k)$ so that $\cE$ is $Z_{(\ell)}$-local or that $\cE$ is $\ell$-complete.) In case $\rmH$ is not necessarily special, one obtains
 a corresponding commutative diagram with all the terms above replaced as in (i).
 \item
  In case $\cE$ denotes a commutative ring spectrum in $\Spt(\k_{et})$ which
is $\ell$-complete for some prime $\ell \ne char(k)$, and $\cE \wedge \rmX_+$ is dualizable in $\Spt(\k_{et}, \cE)$,
the corresponding square commutes in the  \'etale stable homotopy category $\SH(\k_{et}, \cE)$.
\end{enumerate}
\end{proposition}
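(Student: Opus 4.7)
I would deduce Proposition~\ref{multiplicative.prop.0} from the naturality of the transfer established in Proposition~\ref{base.change}, applied to a suitable diagonal base-change. Set $\rmG = \rmH \times \rmH$ and $\rmG' = \rmH$, with $\rmG'$ embedded diagonally via $h \mapsto (h,h)$; take $\rmE' = \rmE$ viewed as an $\rmH$-torsor mapping diagonally into the $(\rmH \times \rmH)$-torsor $\rmE \times \rmE$ over $\rmB \times \rmB$. Endow $\rmX$ with the $\rmH \times \rmH$-action induced by the second projection (so the first factor acts trivially), and endow $\rmY \times \rmY$ with the coordinatewise $\rmH \times \rmH$-action; restriction along the diagonal recovers the original $\rmH$-actions on $\rmX$ and on $\rmY$. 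Take $\rmY' = \rmY$ mapping into $\rmY \times \rmY$ via the diagonal.

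Applying Proposition~\ref{base.change} to this data produces a commutative square whose left vertical arrow is $\tr(\rmf_\rmY)$ and whose right vertical arrow is the $(\rmH \times \rmH)$-equivariant transfer $\tr(\rmf_{\rmY \times \rmY})$; the horizontal arrows are the maps induced on Borel constructions by the diagonals $\rmE \to \rmE \times \rmE$ and $\rmY \to \rmY \times \rmY$. The next step is to identify this square with the one in the statement. For special $\rmH$ there is a canonical isomorphism
\[
(\rmE \times \rmE) \times_{\rmH \times \rmH}(\rmZ_1 \times \rmZ_2) \;\cong\; (\rmE \times_{\rmH} \rmZ_1) \times (\rmE \times_{\rmH} \rmZ_2),
\]
and using $\rmY \times \rmY \times \rmX \cong \rmY \times (\rmY \times \rmX)$ as $\rmH \times \rmH$-schemes (by our choice of action on $\rmX$), the bottom-right object becomes $(\rmE \times_{\rmH} \rmY)_+ \wedge (\rmE \times_{\rmH} \rmY)_+$ and the top-right object becomes $(\rmE \times_{\rmH} \rmY)_+ \wedge (\rmE \times_{\rmH} (\rmY \times \rmX))_+$. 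Under this identification the right vertical arrow becomes $\mathrm{id} \wedge \tr(\rmf_\rmY)$ and the bottom horizontal arrow becomes the reduced diagonal $\rmd$. The top horizontal arrow unpacks as $\rmd$ followed by the projection $\mathrm{id} \wedge \rmq \wedge \mathrm{id}$, matching the statement of (i).

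Parts (ii) and (iii) follow from the same argument by smashing with $\cE$ (respectively $\rmR\epsilon_*(\epsilon^*\cE)$) throughout, since all identifications above are natural and parts (ii) and (iii) of Proposition~\ref{base.change} apply in the respective settings. The non-special case of each statement is handled uniformly by replacing the Borel constructions with their derived pushforwards from the big \'etale site to the big Nisnevich (or \'etale) site, following the conventions of \cite[(8.3.6)]{CJ23-T1}; the product decomposition above continues to hold at the level of these derived pushforwards because $\rmR\epsilon_*$ commutes with finite products.

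The main technical point is the identification of $\tr(\rmf_{\rmY \times \rmY})$ with $\mathrm{id} \wedge \tr(\rmf_\rmY)$ under the product decomposition of the Borel construction. This reduces to the observation that, since the first copy of $\rmH$ acts trivially on $\rmX$, the pre-transfer attached to this $(\rmH \times \rmH)$-action on $\rmX$ is the external smash of the identity pre-transfer of $Spec\,\k$ with the original pre-transfer for $\rmX$, and forming $(\rmE \times \rmE) \times_{\rmH \times \rmH}(-)$ preserves this external product structure via the isomorphism above. The diagram chase underlying the pre-transfer construction in \cite[Definition 8.8]{CJ23-T1} is compatible with these external products, which yields the desired equality.
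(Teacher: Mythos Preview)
Your proposal is correct and follows essentially the same route as the paper: apply Proposition~\ref{base.change} with $\rmG=\rmH\times\rmH$, $\rmG'=\rmH$ embedded diagonally, the torsor $\rmE\times\rmE\to\rmB\times\rmB$, and $\rmY$ replaced by $\rmY\times\rmY$, then observe that the top horizontal map in the resulting square factors as $\rmd$ followed by $\mathrm{id}\wedge\rmq\wedge\mathrm{id}$. The paper's proof is terser and leaves implicit two points you spell out --- the choice of $\rmH\times\rmH$-action on $\rmX$ through the second factor, and the identification of the right-hand transfer $\tr(\rmf_{\rmY\times\rmY})$ with $\mathrm{id}\wedge\tr(\rmf_{\rmY})$ via the product decomposition of the Borel construction --- so your write-up is, if anything, more complete on these details.
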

\begin{proof} We apply Proposition ~\ref{base.change} with the following choices: 
\begin{enumerate}[\rm(i)]
\item for $\rmG$, we take $\rmH \times \rmH$, for $\rmG'$ we take the diagonal $\rmH$ in $\rmH \times \rmH$,
\item for $\rmp$ ($\rmp'$) we take $\rmp \times \rmp: \rmE \times \rmE \ra \rmB \times \rmB$ (the given $\rmp: \rmE \ra \rmB$, \res),
\item for $\rmY$ we take $\rmY \times \rmY$ provided with the obvious action of $\rmH \times \rmH$ and for $\rmY'$ we take $\rmY$ provided
with the given action of $\rmH$.
\end{enumerate}
\vskip .1cm
Then we obtain the cartesian square as in ~\eqref{base.change.0} and the map in the corresponding top row
is given by 
\vskip .1cm
$ \rmE\times _{\rmH}(\rmY \times \rmX)  \ra  (\rmE \times \rmE){\underset {\rmH \times \rmH} \times} (\rmY  \times (\rmY \times \rmX))$.
\vskip .1cm \noindent
This factors as $ \rmE\times _{\rmH}(\rmY \times \rmX) {\overset d \ra}  (\rmE \times \rmE){\underset {\rmH \times \rmH} \times} ((\rmY \times \rmX) \times (\rmY \times \rmX)) {\overset {id \times q \times id} \ra}
(\rmE \times \rmE){\underset {\rmH \times \rmH} \times} (\rmY  \times (\rmY \times \rmX))$.
Therefore, Proposition ~\ref{base.change} applies.
\end{proof}
Next one may recall the definition of weak-module spectra from Definition ~\ref{weak.ring.modules}.
 
\begin{corollary} (Multiplicative property of transfer in generalized cohomology theories) \index{transfer and multiplicative property}
 \label{multiplicative.prop.1}
 Let $\rmH$ denote a linear algebraic group. 
 \begin{enumerate}[\rm(i)]
\item
Assume that $\rmA$ is a weak ring spectrum in $\Spt(\k_{\rm mot}, \cE)$, and that $\rmM$ is a weak module spectrum over $\rmA$. 
Assume that  $\rmX$, $\rmY$ are smooth quasi-projective schemes over $k$ or unpointed simplicial presheaves on $\Sm_{\k}$ provided with an action by $\rmH$ and that 
$\cE \wedge \rmX_+$ is dualizable in $\SH(\k_{\rm mot}, \cE)$. 
\vskip .1cm
Let $\pi_{\rmY}:  \rmE\times _{\rmH}(\rmY \times \rmX) \ra  \rmE\times _{\rmH}\rmY $ ($\pi_{\rmY}: \rmR\epsilon_*(\rmE \times _{\rmH}(\rmY \times \rmX) \ra \rmR \epsilon_*(\rmE \times _{\rmH}\rmY)$) denote the (obvious) map induced by the structure 
map $\rmX \ra Spec \, \k$ when $\rmH$ is special (when $\rmH$ is not necessarily special, \res).
\vskip .2cm
Then 
\[\tr(\rmf_{\rmY})^*(\pi_{\rmY}^*(\alpha). \beta) = \alpha .\tr(\rmf_{\rmY})^*(\beta), \quad \alpha \in \rmh^{*, \bullet}(\rmE\times _{\rmH}\rmY, \rmM), \beta \in \rmh^{*, \bullet}(\rmE\times _{\rmH}(\rmY \times \rmX)) , \rmA), \]
irrespective of whether $\rmH$ is special or not, and where the generalized motivic cohomology is defined
as in Definition ~\ref{coh.not}.
Here $\tr(\rmf_{\rmY})^*$ ($\pi_{\rmY}^*$) denotes the map induced on generalized cohomology by the map $\tr(\rmf_{\rmY})$ ($\pi_{\rmY}$, \res). In particular, 
\[\pi_{\rmY}^*: \rmh^{*, \bullet}(\rmE \times_{\rmH}\rmY, \rmM) \ra \rmh^{*, \bullet}(\rmE \times_{\rmH}(\rmY \times \rmX), \rmM) \]
 is {\it split injective} when $\rmH$ if $\tr(\rmf_{\rmY})^*(1)=\tr(\rmf_{\rmY})^*(\pi_{\rmY}^*(1)) $ is a unit, where $1 \in \rmh^{0, 0}(\rmE \times_{\rmH}\rmY, \rmA)$ is the unit of the graded ring $\rmh^{*, \bullet}(\rmE \times_{\rmH}\rmY, \rmA)$. 
\item
Assume that $\ell$ is a prime
$ \ne char(k)$ so that $\cE$ is $\ell$-complete, with $\epsilon^*(\cE \wedge \rmX_+)$ dualizable in $\SH(\k_{et}, \epsilon^*(\cE))$.
Assume further that 
$\rmA$ is a weak ring spectrum in $\Spt(\k_{et}, \epsilon^*(\cE))$, and that $\rmM$ is a weak module spectrum over $\rmA$. Then the 
same conclusions as in (i) hold for the generalized cohomology with respect to $\rmA$ and $\rmM$.
\end{enumerate}
\end{corollary}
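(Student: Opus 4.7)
The plan is to reduce the Frobenius-type identity at the level of generalized cohomology to the spectrum-level multiplicative property established in Proposition \ref{multiplicative.prop.0}, by representing cohomology classes as morphisms in the appropriate stable homotopy category and chasing them around the relevant diagram. Throughout, I will write the argument in the special case; the non-special case follows by replacing each term of the form $\Sigma^{\infty}_{\rmT}(\rmE\times_{\rmH}\rmZ)_+$ with $\rmR\epsilon_*(\epsilon^*\mbS_{\k}) \wedge \rmR\epsilon_*(\rmE\times_{\rmH}^{et}\rmZ)_+$ throughout, using the corresponding non-special version of Proposition \ref{multiplicative.prop.0}(i).

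First, I would represent the classes: $\alpha \in \rmh^{*,\bullet}(\rmE\times_{\rmH}\rmY,\rmM)$ corresponds to a morphism $\tilde{\alpha}\colon \cE \wedge (\rmE\times_{\rmH}\rmY)_+ \to \rmM(j)[i]$ in $\SH(\k_{\rm mot},\cE)$, and $\beta \in \rmh^{*,\bullet}(\rmE\times_{\rmH}(\rmY\times\rmX),\rmA)$ corresponds to $\tilde{\beta}\colon \cE \wedge (\rmE\times_{\rmH}(\rmY\times\rmX))_+ \to \rmA(j')[i']$. By definition of the cup product in the generalized cohomology of $\rmE\times_{\rmH}(\rmY\times\rmX)$ with coefficients in the weak module spectrum $\rmM$ over $\rmA$, the class $\pi_{\rmY}^*(\alpha)\cdot\beta$ is realized as the composition obtained by applying the diagonal on $(\rmE\times_{\rmH}(\rmY\times\rmX))_+$, then the projection $q$ on the first factor $\rmY\times\rmX \to \rmY$ composed with $\pi_{\rmY}$ to land in $(\rmE\times_{\rmH}\rmY)_+$, then smashing with $\tilde{\alpha}$ and $\tilde{\beta}$, and finally applying the module pairing $\rmA\wedge\rmM \to \rmM$.

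Next I would invoke Proposition \ref{multiplicative.prop.0}(ii), whose commutative square relates $\tr(\rmf_{\rmY})$ pre-composed with the diagonal on $(\rmE\times_{\rmH}\rmY)_+$ to the composition with $\id\wedge \tr(\rmf_{\rmY})$ after applying the diagonal on $(\rmE\times_{\rmH}(\rmY\times\rmX))_+$ and the map $\id\wedge q\wedge\id$. Applying $\tr(\rmf_{\rmY})^*$ to the class $\pi_{\rmY}^*(\alpha)\cdot\beta$ and inserting this commutative square into the composition described above lets one rewrite the resulting morphism as: diagonal on $(\rmE\times_{\rmH}\rmY)_+$, smash with $\tilde{\alpha}$ and with $\tr(\rmf_{\rmY})^*\tilde{\beta}$, and then apply the module pairing. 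This last composition is, by definition, the class $\alpha\cdot\tr(\rmf_{\rmY})^*(\beta)$. This yields the Frobenius identity. The split injective statement follows immediately by taking $\beta = \pi_{\rmY}^*(\alpha')$ and noting that taking $\alpha'=1$ forces $\tr(\rmf_{\rmY})^*(\pi_{\rmY}^*(\alpha)) = \alpha\cdot\tr(\rmf_{\rmY})^*(1)$; when $\tr(\rmf_{\rmY})^*(1)$ is a unit in $\rmh^{0,0}(\rmE\times_{\rmH}\rmY,\rmA)$, multiplication by it on $\rmh^{*,\bullet}(\rmE\times_{\rmH}\rmY,\rmM)$ is invertible, providing the desired retraction of $\pi_{\rmY}^*$.

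For part (ii), the argument is formally identical once one uses Proposition \ref{multiplicative.prop.0}(iii) in place of part (ii), together with the dualizability hypothesis on $\epsilon^*(\cE\wedge\rmX_+)$ in $\SH(\k_{et},\epsilon^*(\cE))$ which guarantees the existence of $\tr(\rmf_{\rmY})$ in the étale stable homotopy category via Theorem \ref{thm.1}(vi). The main potential obstacle is the bookkeeping in the non-special case, since the square from Proposition \ref{multiplicative.prop.0} takes place among spectra with $\rmR\epsilon_*$ applied and one must check that smashing with cohomology classes (which are again morphisms of such spectra) is compatible with the square; however, since $\rmR\epsilon_*$ respects smash products in the relevant sense used throughout \cite{CJ23-T1} and the module pairing $\rmA\wedge\rmM \to \rmM$ lives natively in the same category, the diagram chase goes through without modification.
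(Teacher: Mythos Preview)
Your proposal is correct and follows essentially the same approach as the paper: both arguments represent cohomology classes as morphisms in the stable homotopy category, apply Proposition~\ref{multiplicative.prop.0} to control the interaction of the diagonal with the transfer, and then use the module pairing to conclude. The paper makes the diagram chase more explicit by breaking it into three commuting squares of $\rmRHom$-spectra (one encoding that $\tr(\rmf_{\rmY})$ is a stable map, one coming directly from Proposition~\ref{multiplicative.prop.0}, and one from naturality of the pairing $\rmM \wedge \rmA \to \rmM$), whereas you compress these into a single paragraph; but the content is identical, and your derivation of the split-injectivity statement by specializing $\beta$ to $1 = \pi_{\rmY}^*(1)$ matches the paper exactly.
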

\begin{proof} The proof of both statements  follow by applying the cohomology theory $\rmh^{*, \bullet}$ to all terms in the commutative diagram  in 
Proposition  ~\ref{multiplicative.prop.0}. We will provide details only for the case $\rmH$ is special, as the general case
 follows similarly. Moreover, as (ii) may be proven in the same manner as (i), 
  we will only discuss the proof of (i). 
 \vskip .2cm
In the proof of (i), one needs to start with $\rmh^{*, \bullet}(\rmE \times_{\rmH}\rmY, \rmM)$ and $\rmh^{*, \bullet}(\rmE\times_{\rmH} (\rmY \times \rmX), \rmM)$ along with
 the pairings:
\[\rmh^{*, \bullet}(\rmE \times_{\rmH}\rmY, \rmM) \otimes \rmh^{*, \bullet}(\rmE\times_{\rmH}(\rmY \times  \rmX), \rmA) {\overset {\pi_{\rmY}^* \otimes id} \ra } \rmh^{*, \bullet}(\rmE\times_{\rmH} (\rmY \times\rmX), \rmM) \otimes \rmh^{*, \bullet}(\rmE \times_{\rmH} (\rmY \times \rmX), \rmA) \ra
\rmh^{*, \bullet}(\rmE \times_{\rmH}(\rmY \times \rmX), \rmM), \]
\[\rmh^{*, \bullet}(\rmE \times_{\rmH}\rmY, \rmM) \otimes \rmh^{*, \bullet}(\rmE \times_{\rmH}\rmY, \rmA) \ra \rmh^*(\rmE \times_{\rmH}\rmY, \rmM). \]
This provides us with the commutative diagram:
\fontsize{9}{12}
\be \begin{equation}
     \label{module.prop}
\xymatrix{{\rmh^{*, \bullet}(\rmE \times_{\rmH}\rmY, \rmM) \otimes \rmh^{*, \bullet}(\rmE\times_{\rmH}(\rmY \times \rmX), \rmA)} \ar@<1ex>[rr]^(.5){id \otimes \tr(\rmf_{\rmY})^*} \ar@<1ex>[d]^{\pi_{\rmY}^* \otimes id} && {\rmh^{*, \bullet}(\rmE \times_{\rmH}\rmY, \rmM) \otimes \rmh^*(\rmE \times_{\rmH}\rmY, \rmA)} \ar@<1ex>[d]^{d^*} \\
 {\rmh^{*, \bullet}(\rmE \times_{\rmH} (\rmY \times \rmX), \rmM) \otimes \rmh^{*, \bullet}(\rmE \times _{\rmH}(\rmY \times \rmX), \rmA)} \ar@<1ex>[r]& {\rmh^{*, \bullet}(\rmE\times_{\rmH}(\rmY \times \rmX), \rmM)} \ar@<1ex>[r]^{\tr(\rmf_{\rmY})^*} & {\rmh^{*, \bullet}(\rmE \times_{\rmH}\rmY, \rmM)}}
\end{equation} \ee
\normalsize
Since the multiplicative property is the key to obtaining  splittings in the motivic stable homotopy category (see Theorems ~\ref{stable.splitting.1} and ~\ref{stable.splitting.2}), we will provide details on how one deduces commutativity of the 
above diagram. The commutativity of the above diagram follows
from the commutativity of a large diagram which we break up into three squares as follows, where $\rmRHom$ denotes
the derived external Hom in the category $\Spt(\k_{\rm mot}, \cE)$.
\vskip .2cm
\be \begin{equation}
     \xymatrix{{\rmRHom(\rmE \times_{\rmH} \rmY, \rmM) \otimes \rmRHom(\rmE \times_{\rmH}(\rmY \times \rmX), \rmA)} \ar@<1ex>[r] \ar@<1ex>[d]^{id \otimes \tr(\rmf_{\rmY})^*} & {\rmRHom((\rmE \times \rmE) _{\rmH \times \rmH} (\rmY \times (\rmY \times \rmX)), \rmM \wedge \rmA)} \ar@<1ex>[d]^{(id \wedge \tr(\rmf_{\rmY}))^*} \\
               {\rmRHom(\rmE \times_{\rmH}\rmY, \rmM) \otimes \rmRHom(\rmE \times_{\rmH}\rmY, \rmA)} \ar@<1ex>[r]& {\rmRHom((\rmE \times_{\rmH}\rmY)_+ \wedge (\rmE \times_{\rmH}\rmY)_+, \rmM \wedge \rmA)}}
 \end{equation} \ee
\[\xymatrix{{\rmRHom((\rmE \times \rmE) _{\rmH \times \rmH} (\rmY  \times (\rmY \times \rmX)), \rmM \wedge \rmA)} \ar@<1ex>[d]^{(id \wedge \tr(\rmf_{\rmY}))^*} \ar@<1ex>[r]^(.6){d^* \circ (\pi_{\rmY}^* \wedge id)}  &  {\rmRHom(\rmE\times_{\rmH}(\rmY \times \rmX), \rmM \wedge \rmA)} \ar@<1ex>[d]^{\tr(\rmf_{\rmY})^*} \\
          {\rmRHom((\rmE \times_{\rmH}\rmY)_+ \wedge (\rmE \times_{\rmH}\rmY)_+, \rmM \wedge \rmA)} \ar@<1ex>[r]^{d^*} & {\rmRHom(\rmE \times_{\rmH}\rmY, \rmM \wedge \rmA)}}\]
%{RHom(\EH \times \EH _{\rmH \times \rmH} \rmX \times \rmX, \rmM \wedge \rmA)} \ar@<1ex>[r]^{d^*} & {RHom(\EH \times_{\rmH} Spec\, \k, \rmM \wedge \rmA)} \ar@<1ex>[d]^{\tr(\rmf_{\rmY})^*}
%\vskip .2cm
\[\xymatrix{{\rmRHom(\rmE \times_{\rmH}(\rmY \times \rmX), \rmM \wedge \rmA)} \ar@<1ex>[d]^{\tr(\rmf_{\rmY})^*} \ar@<1ex>[r]^{\mu} & {\rmRHom(\rmE \times _{\rmH} (\rmY \times \rmX), \rmM)} \ar@<1ex>[d]^{\tr(\rmf_{\rmY})^*}\\
          {\rmRHom(\rmE \times_{\rmH}\rmY, \rmM \wedge \rmA)} \ar@<1ex>[r]^{\mu} & {\rmRHom(\rmE \times_{\rmH}\rmY, \rmM)}}\]
%\vskip .2cm
The commutativity of the first square is clear from the observation that the transfer $\tr(\rmf_{\rmY})$ is a stable map.
The commutativity of the second square is essentially the multiplicative property proved in 
Proposition ~\ref{multiplicative.prop.0}. The map $\mu$ in the third square is the map induced by the
pairing $\rmM \wedge \rmA \ra \rmM$. The commutativity of this square again follows readily from the 
observation that $\tr(\rmf_{\rmY})$ is a stable map. The commutativity of the square in ~\eqref{module.prop} results
 by composing the appropriate maps in the first square followed by the second and then the third square. More precisely, one can see that the
composition of the maps in the top rows of the three squares followed by the right vertical map in the last square equals 
$\tr(\rmf_{\rmY})^*  \circ d^* \circ (\pi_{\rmY}^* \otimes id)$ which is the composition of the left vertical map and the bottom row in the square ~\eqref{module.prop}.
Similarly the composition of the left vertical map in the first square and the maps in the bottom rows of the three squares above equals 
$ d^*  \circ (id \otimes \tr(\rmf_{\rmY})^*)$ which is the composition of the top row and the right vertical map in ~\eqref{module.prop}. This completes
the proof of the first statement in (i).
\vskip .2cm 
The last statement in (i) follows by taking 
$\beta =1 = \pi_{\rmY}^*(1) \in \rmh^{0,0}(\rmE\times _{\rmH}(\rmY \times \rmX), \rmA)$. The statement in (ii) follows from an entirely similar argument in the \'etale case.
\end{proof} 
\vskip .2cm
{ Next, we proceed to interpret the condition that $\tr(\rmf_{\rmY})^*(1)$ be a unit in $\rmh^{0,0}(\rmE\times_{\rmG}\rmY, \rmA)$, for a weak ring spectrum $\rmA$
in terms of the trace $\tau_{\rmX}(\rmf)$.}  We will do this only under the assumption the group $\rmG$ is also {\it special}. \index{transfer and trace}
Recall that we have a standing assumption that $\rmB$ is {\it connected}. 
In this context, we will {\it assume} the following:
$\rmY $ \mbox{ is a connected smooth scheme}  and  $\rmE\times_{\rmG} \rmY= \colimn\{\rmE_n\times_{\rmG} \rmY|n \ge 1\}$, where
$\{\rmE_n|n \ge 1\}$ 
is an ind-scheme with each  $ \rmE_n\times_{\rmG} \rmY$  is a connected smooth scheme so that   each $\rmB_n = \rmE_n/\rmG$
for $n>>0$  has  $\k$-{rational points}. We let $\rmB = \limn \rmB_n$.

We already observed in \cite[8.2.4. Convention]{CJ23-T1} that when $\rmB$ denotes
a finite degree approximation, $\BG^{\it gm,m}$, (with $m$ sufficiently large) of the classifying spaces for a linear algebraic group $\rmG$,
  it has $k$-rational points and is therefore a { geometrically connected smooth scheme of finite type over $\k$: see \cite[Tome 24, Chapitre 4, Corollaire 4.5.14]{EGA}.
Furthermore, we will assume that 
the generalized cohomology theory $\rmh^{*, \bullet}( \quad, \rmA)$ (defined with respect to the weak ring spectrum $\rmA$ as in Definition ~\ref{coh.not}) is such that the restriction 
\be \begin{align}
     \label{rat.point}
 \rmh^{0, 0}(\rmE \times_{\rmG}\rmY, \rmA) &\ra  \rmh^{0,0}(\rmY, \rmA)
%\rmh^{0, 0}(R\epsilon_*(\rmE \times_{\rmG}^{et}(a \circ \epsilon^*)(\rmY)), \rmA) &\ra  \rmh^{0,0}(R\epsilon_*((a \circ \epsilon^*)(\rmY_{\k'})), \rmA), \mbox{ for } \rmG \mbox{ not special} \notag
\end{align} \ee
is an isomorphism, where ${\rm Spec} \, \k \ra \rmB$ is any $k$-rational point of $\rmB$, and $\rmY$ is viewed as the fiber of $\rmE\times_{\rmG} \rmY$ over ${\rm Spec } \, \k$.
\begin{proposition}
 \label{trace.2}
 Assume that both $\rmY$ and $\rmX$ are smooth schemes of finite type over $\k$ and 
 $\rmG$ is special. % or (ii) $\rmE \times _{\rmG} \rmY$ is ${\mathbb A}^1$-fibrant.
\vskip .1cm
 Under the above assumptions %as well as ~\eqref{rat.point}, 
 \[\tr(\rmf_{\rmY})^*(1) = \tr(\rmf_{\rmY})^* (\pi_{\rmY}^*(1)) = (id_{\rmY} \wedge \tau_{\rmX}(f))^*(1)\]
 where $\tau_{\rmX}(\rmf)$ is the trace defined in \cite[Definition 8.4(iv)]{CJ23-T1}
and $\pi_{\rmY}^*:\rmh^{*, \bullet}(\rmE \times_{\rmG}\rmY, \rmA) \ra \rmh^{*, \bullet}(\rmE\times_{\rmG}(\rmY \times \rmX), \rmA)$, 
$\tr(\rmf_{\rmY})^*:\rmh^{*, \bullet}(\rmE\times_{\rmG}(\rmY \times \rmX), \rmA) \ra \rmh^{*, \bullet}(\rmE \times_{\rmG}\rmY, \rmA)$ denote the
induced maps.
\end{proposition}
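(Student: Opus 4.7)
The plan is to reduce the statement to a fiberwise computation over a $k$-rational point of $\rmB$ using the base-change naturality (Proposition~\ref{base.change}) together with the cohomological rigidity assumption~\eqref{rat.point}. First I would observe that $\pi_{\rmY}^*(1) = 1$ simply because $\pi_{\rmY}^*$ is a map of graded rings sending unit to unit, which gives the first equality $\tr(\rmf_{\rmY})^*(1) = \tr(\rmf_{\rmY})^*(\pi_{\rmY}^*(1))$ for free.

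Next I would choose a $k$-rational point $i: \mathrm{Spec}\,\k \to \rmB$, which exists by the standing hypothesis on $\rmB$ (inherited from the $\rmB_n$'s), and consider the pulled-back $\rmG$-torsor $\rmE' = i^*\rmE \cong \rmG$ over $\mathrm{Spec}\,\k$. Applying Proposition~\ref{base.change} to the commutative square
\[\xymatrix{\rmG \ar[r] \ar[d] & \rmE \ar[d] \\ \mathrm{Spec}\,\k \ar[r]^-{i} & \rmB}\]
with $\rmY' = \rmY$ (viewed as a trivial $\rmG$-space in the obvious way consistent with the diagonal action on $\rmY \times \rmX$) yields a commutative square in $\SH(\k)$ whose right vertical arrow is $\tr(\rmf_{\rmY})$ and whose left vertical arrow is the transfer associated to the trivial $\rmG$-torsor $\rmG \to \mathrm{Spec}\,\k$. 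The key point is then to identify this latter transfer: unraveling the construction in \cite[Def.~8.8]{CJ23-T1} in the case of the trivial torsor, the Borel construction collapses to $\rmG \times_{\rmG}(\rmY \times \rmX) = \rmY \times \rmX$ and the whole definition reduces to the pre-transfer $\tr_{\rmG}(\rmf)'$ applied to $\rmX$, smashed with the identity on $\rmY_+$. By \cite[Definition~8.4(iv)]{CJ23-T1} this pre-transfer over a point is exactly the trace $\tau_{\rmX}(\rmf): \mbS_{\k} \to \Sigma^{\infty}_{\T} \rmX_+$, so the left vertical map in the base-change square is $id_{\rmY} \wedge \tau_{\rmX}(\rmf)$.

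Applying the functor $\rmh^{0,0}(-,\rmA)$ to this commutative square, one obtains the identity
\[i^*\bigl(\tr(\rmf_{\rmY})^*(1)\bigr) = (id_{\rmY} \wedge \tau_{\rmX}(\rmf))^*(1)\]
in $\rmh^{0,0}(\rmY,\rmA)$, where $i^*$ denotes restriction to the fiber over the rational point. Finally, the isomorphism~\eqref{rat.point} identifies $\rmh^{0,0}(\rmE\times_{\rmG}\rmY,\rmA) \cong \rmh^{0,0}(\rmY,\rmA)$ under $i^*$, and this isomorphism allows us to promote the fiberwise equality to the claimed equality of elements in $\rmh^{0,0}(\rmE\times_{\rmG}\rmY,\rmA)$, completing the proof.

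The main obstacle will be the identification in step three, namely showing that the transfer for the trivial torsor $\rmG \to \mathrm{Spec}\,\k$, with the $\rmY$ factor carried along, really does reduce to $id_{\rmY} \wedge \tau_{\rmX}(\rmf)$. This requires tracing through the construction in \cite[(8.4.3)--(8.4.17)]{CJ23-T1} in the degenerate case where the vector bundles $\xi^{\rmV}$ and $\eta^{\rmV}$ become trivial equivariant bundles, and checking that the Pontryagin--Thom collapse and compositions used to assemble the transfer degenerate precisely to the classical dualizability trace. Once this identification is in hand, the rest is formal: base-change naturality plus the cohomological isomorphism~\eqref{rat.point}.
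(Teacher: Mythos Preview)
Your approach is essentially identical to the paper's: both use the base-change naturality of Proposition~\ref{base.change} to pass to the fiber over a $k$-rational point, invoke the isomorphism~\eqref{rat.point}, and then identify the transfer over $\mathrm{Spec}\,\k$ with the pre-transfer.

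There is, however, a genuine conflation in your third step that should be fixed. You write that ``this pre-transfer over a point is exactly the trace $\tau_{\rmX}(\rmf): \mbS_{\k} \to \Sigma^{\infty}_{\T} \rmX_+$'' and conclude that the left vertical map in the base-change square is $id_{\rmY} \wedge \tau_{\rmX}(\rmf)$. This is a type error: the trace $\tau_{\rmX}(\rmf)$ is a self-map $\mbS_{\k} \to \mbS_{\k}$, not a map into $\Sigma^{\infty}_{\T}\rmX_+$. What actually happens over the trivial torsor is that the transfer $\tr(\rmf_{\rmY})$ becomes the pre-transfer $id_{\rmY_+} \wedge \tr(\rmf)'$, and it is only the \emph{composition} $\pi_{\rmY} \circ \tr(\rmf_{\rmY})$ that equals $id_{\rmY} \wedge \tau_{\rmX}(\rmf)$. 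Since the quantity you are tracking is $\tr(\rmf_{\rmY})^*(\pi_{\rmY}^*(1)) = (\pi_{\rmY} \circ \tr(\rmf_{\rmY}))^*(1)$, the conclusion survives, but the argument as written misidentifies which map is which. The paper makes exactly this distinction explicit, and your write-up should too.
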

\begin{proof} We discuss explicitly only the case where $char (k)=0$. In positive characteristics $p$, one needs to replace the sphere
spectrum $\mbS_{\k}$ everywhere by the corresponding sphere spectrum with the prime $p$ inverted, or completed away from $p$. 
\vskip .1cm
%We will first assume $\rmG$ is special. 
Then the first equality is clear since $\pi_{\rmY}^*$ is a ring homomorphism and therefore, $\pi_{\rmY}^*(1)=1$.
 Recall that we are assuming the group $\rmG$ is special. The naturality with respect to base-change as in Proposition ~\ref{base.change}, together with the assumption that the restriction $\rmh^{0,0}(\rmE \times_{\rmG}^{et}\rmY, \rmA) \ra \rmh^{0,0}(\rmY, \rmA)$
is an isomorphism shows that $\tr(\rmf_{\rmY})^*\pi_{\rmY}^*(1)$ is the same for $\rmE \times_{\rmG}\rmY$ as well as for $\rmY$.
When $\rmG$ is special and the scheme $\rmB = Spec \, \k$, $\tr(\rmf): \cE\wedge ({\rm Spec }\, \k)_+ \ra \cE \wedge \rmX_+$, (which also identifies
 with the corresponding pre-transfer $\tr(\rmf)'$) so that, $\tr(\rmf_{\rmY}) =id_{\rmY} \wedge \tr(\rmf)'$. Therefore, it is clear
 that $\pi_{\rmY} \circ \tr(\rmf_{\rmY}) = id_{\rmY} \wedge \tau_{\rmX}(f)$ as defined in \cite[Definition 8.4 (iii), (iv)]{CJ23-T1}. 
 Therefore, the equality $\tr(\rmf_{\rmY})^* (\pi_{\rmY}^*(1)) = (id_{\rmY} \wedge \tau_{\rmX}(f))^*(1)$ follows, and completes the proof.

\end{proof}
\begin{proposition}
	\label{no.need.special}
%\begin{enumerate}[\rm(i)]
%\item 
The hypotheses in ~\eqref{rat.point} are satisfied when $\rmh^{*, \bullet}$ denotes motivic cohomology with respect to any commutative ring $\rmR$,
 $\rmY$ is a connected smooth scheme,   
$\rmE\times_{\rmG}^{et} \rmY= \colimn\{\rmE_n\times_{\rmG} \rmY|n \ge 1\}$, where
$\{\rmE_n|n \ge 1\}$ 
is an ind-scheme with each  $ \rmE_n\times_{\rmG} \rmY$  a connected smooth scheme, and when $\rmB_n = \rmE_n/\rmG$, for $n>>0$ has
$\k$-rational points. 
%\item
%Moreover, $\rmY= {\rm Spec} \, \k$,  $\rmY= Spec \, \k'$ with $\k'$ a finite Galois
% extension of $\k$ of order prime to the characteristic,  
%$\colimn{\rm EG}^{\it gm,} \times_{\rmG} {\rm Spec } \,\k = \colimn{\rm BG}^{\it gm,n}$, and $\colimn{\rm EG}^{\it gm,n} \times_{\rmG} {\rm Spec } \,\k' $ with $\k'$ a finite Galois
% extension of order prime to the characteristic  are all ${\mathbb A}^1$-fibrant.
% \end{enumerate}
\end{proposition}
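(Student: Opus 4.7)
The plan is to exploit the fact that motivic cohomology in bidegree $(0,0)$ with coefficients in a commutative ring $\rmR$ is essentially trivial: for any connected smooth scheme $\rmZ$ over $\k$, one has $\rmh^{0,0}(\rmZ, \rmR) = \rmR$, since in bidegree $(0,0)$ the motivic Eilenberg--MacLane spectrum $\rmH\rmR$ computes $\rmH^0_{\Nis}(\rmZ, \underline{\rmR}) = \rmR$ for $\rmZ$ connected. Under the running hypotheses this applies directly to $\rmY$ and, level-by-level, to each $\rmE_n\times_{\rmG}\rmY$, so that
\[
\rmh^{0,0}(\rmY, \rmR) \;=\; \rmR, \qquad \rmh^{0,0}(\rmE_n\times_{\rmG}\rmY, \rmR) \;=\; \rmR \quad \text{for all } n.
\]

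Next I would use that a $\k$-rational point of $\rmB_n$ (for $n$ large enough, guaranteed by hypothesis) realizes $\rmY$ as the fibre of $\rmE_n\times_{\rmG}\rmY \to \rmB_n$, yielding a section-type inclusion $\rmY \hookrightarrow \rmE_n\times_{\rmG}\rmY$. The induced restriction $\rmh^{0,0}(\rmE_n\times_{\rmG}\rmY, \rmR) \to \rmh^{0,0}(\rmY, \rmR)$ is a unital ring homomorphism $\rmR \to \rmR$, hence the identity; in particular it is an isomorphism. These identifications are compatible in $n$ because one may choose the rational point of $\rmB_n$ compatibly with the pro-system $\{\rmB_n\}$ (shrinking to some $n\gg 0$ if needed, which does not affect the colimit).

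Finally, I would pass to the colimit: since $\rmE\times_{\rmG}^{et}\rmY = \colimn \rmE_n\times_{\rmG}\rmY$, representability of motivic cohomology by the spectrum $\rmH\rmR$ gives a Milnor-type short exact sequence
\[
0 \;\longrightarrow\; {\invlimn}^1 \rmh^{-1,0}(\rmE_n\times_{\rmG}\rmY, \rmR) \;\longrightarrow\; \rmh^{0,0}(\rmE\times_{\rmG}^{et}\rmY, \rmR) \;\longrightarrow\; \invlimn \rmh^{0,0}(\rmE_n\times_{\rmG}\rmY, \rmR) \;\longrightarrow\; 0.
\]
The inverse limit on the right is the constant system of copies of $\rmR$, hence equals $\rmR$; moreover each transition map is the identity so the $\invlim^1$ term vanishes. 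Thus $\rmh^{0,0}(\rmE\times_{\rmG}^{et}\rmY, \rmR)=\rmR$, and the restriction map to $\rmh^{0,0}(\rmY,\rmR)=\rmR$ inherits from each finite stage the property of being the identity, hence an isomorphism, which is exactly the condition in \eqref{rat.point}.

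The only mildly delicate step is the compatibility of the chosen rational points across the tower $\{\rmB_n\}$ and the vanishing of the $\invlim^1$ term; both are straightforward here because the restriction maps between the $\rmh^{0,0}$'s are already identities of $\rmR$ after any such choice, so the system is Mittag--Leffler (in fact constant). Note that no special-ness assumption on $\rmG$ is used in the argument, which is the point of the proposition's title.
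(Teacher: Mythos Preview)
Your proposal is correct and follows essentially the same route as the paper: the key point in both is that the motivic complex $\rmR(0)$ is the constant sheaf $\rmR$, so $\rmh^{0,0}$ of a connected smooth scheme is just $\rmR$, and any unital ring map $\rmR\to\rmR$ is the identity. The paper's proof is terser and simply invokes connectedness of $\rmE\times_{\rmG}\rmY$ and $\rmY$ directly, whereas you are more explicit about handling the ind-scheme structure via the Milnor $\lim^1$ sequence; this extra care is harmless here and arguably clarifies why the passage to the colimit causes no trouble.
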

\begin{proof}
Recall that  $\rmY$ and $\rmE\times_{\rmG}\rmY$ are assumed to be connected.
Observe that now, $\rmh^{0,0} =\rmH_{\rmM}^{0,0}$, which denotes motivic cohomology in degree $0$ and weight $0$. The motivic complex
$\rmR(0)$ is the constant sheaf associated to the ring $\rmR$.
Therefore, since $\rmE\times_{\rmG}\rmY$ and $\rmY$ are connected, the restriction map 
$\rmH^{0,0}(\rmE\times_{\rmG}\rmY, \rmR) \ra \rmH^{0,0}(\rmY, \rmR)$ is an isomorphism, where $\rmY$ is the fiber of $\rmE\times_{\rmG}\rmY \ra \rmB$ over any $k$-rational point of $\rmB$. 
It follows that
the hypothesis in ~\eqref{rat.point} is always satisfied, when $\rmh^{*, \bullet}( \quad , \rmA)$ denotes motivic cohomology with respect to any commutative ring $\rmR$. This proves
the proposition.

\end{proof}
\vskip .2cm \noindent
\subsection{\bf Proof of Theorem ~\ref{thm.1}} 
\label{indep.class.sp}
We will first clarify the terminology used. Recall that ${\BG}^{\it gm,m}$ 
(${\EG}^{\it gm,m}$) denotes the $m$-th degree approximation to the classifying 
space of the group $\group$ (its principal $\group$-bundle, \res) as in \cite{MV}: see also \cite{Tot}. If $\rmX$ is a 
scheme with $\rmG$-action, one can form the scheme ${\EG}^{\it gm,m}\times_{\rmG} \rmX$, which is called
the Borel construction. In case $\rmG$ is not special, the torsor ${\EG}^{\it gm, m} \ra {\BG}^{\it gm, m}$ is locally trivial
 only in the \'etale topology, so that in this case we replace the Borel construction above by
 $\rmR\epsilon_*({\EG}^{\it gm,m}\times_{\rmG}^{et} \rmX)$ as discussed in \cite[section 8.3, Case 2]{CJ23-T1}.
 However, we will continue to denote $\rmR\epsilon_*({\EG}^{\it gm,m}\times_{\rmG}^{et} \rmX)$ by ${\EG}^{\it gm,m}\times_{\rmG} \rmX$ mainly for the sake of 
 simplicity of notation.
 \vskip .1cm
The first statement in the Theorem is the compatibility
of the transfer with various degrees of finite dimensional approximations to the classifying space: this has been discussed in \cite[8.4, Step 2]{CJ23-T1} in the construction of the transfer.
The second statement in the Theorem 
 is the multiplicative property
proven in Corollary ~\ref{multiplicative.prop.1}. Taking $\beta =1$ in (ii) proves that if $\tr(\rmf_{\rmY})^*(1)$ is a unit, then the composition
$\tr(\rmf_{\rmY})^*\circ \pi_{\rmY}^*$ is an isomorphism for any weak module spectrum $\rmM$ over $\rmA$. Conversely if $\tr(\rmf_{\rmY})^*\circ \pi_{\rmY}^*$
 is an isomorphism, then the multiplicative property in (ii) shows that, there exists some class $\alpha \in \rmh^{0, 0}(\rmE\times _{\rmG}\rmY, \rmA)$ 
 so that $\alpha. \tr(\rmf_{\rmY})^*(\pi_{\rmY}^*(1)) = 1 \in \rmh^{0, 0}(\rmE\times _{\rmG}\rmY, \rmA)$, which shows $\tr(\rmf_{\rmY})^*(\pi_{\rmY}^*(1))$
 is a unit in $\rmh^{0, 0}(\rmE\times _{\rmG}\rmY, \rmA)$. This proves the first statement in (iii).
\vskip .1cm
The second statement in (iii) is now clear, and the first statement in (iv) follows from the naturality property for the transfer with respect to
base-change as in Proposition ~\ref{base.change}. (See Remark ~\ref{change.groups}.) That the transfer
is compatible with change of base fields follows from the corresponding property for the
pre-transfer: see \cite[Proposition 7.3]{CJ23-T1}. The second statement in (ii) follows from the fact that the transfer is defined using the pre-transfer, which is a stable map
that involves no degree or weight shifts. 
\vskip .1cm
Next we will sketch an argument to prove Theorem ~\ref{thm.1}(v).
 Let $\{\BG^{\it gm,m}(1)|m\}$, $\{\BG^{\it gm,m}(2)|m \}$ denote two sequences of
finite degree approximations to the classifying space of the given group $\group$
satisfying certain basic assumptions as in \cite{MV}. (See also \cite{Tot}.) Let $\{ \EG^{\it gm,m}(1), \EG^{\it gm,m}(2)|m\}$ denote the corresponding universal
$\group$-bundles: { the main requirements here are that both these have free actions by $\group$ and that as $m \ra \infty$, these are ${\mathbb A}^1$-acyclic}.
\vskip .1cm
Then a key observation is that $\{\EG^{\it gm,m}(1) \times \EG^{\it gm,m}(2)|m\}$ with the 
diagonal action of the group $\group$ also satisfies the same hypotheses so that their
quotient by the diagonal action of $\group$ will also define approximations to the 
classifying space of the $\group$. Therefore, after replacing $\{\EG^{\it gm,m}(1)|m\}$ with
$\{\EG^{\it gm,m}(1) \times \EG^{\it gm,m}(2)|m\}$, we may assume that one has a direct system
of smooth surjective maps $\{\EG^{\it gm,m}(1) \ra \EG^{\it gm,m}(2)|m\}$. Now it is straightforward to verify that all the constructions discussed in the above steps for
the transfer are compatible with the maps $\{\EG^{\it gm,m}(1) \ra \EG^{\it gm,m}(2)|m\}$. 
Therefore, by Proposition ~\ref{base.change}, one obtains a direct system of homotopy commutative diagrams, $m \ge 1$:
\[ \xymatrix{{{\Sigma^{\infty}_{\T}}(\EG^{\it gm,m}(1)\times_{\rmG}X)_+} \ar@<1ex>[r] & {{\Sigma^{\infty}_{\T}}(\EG^{\it gm,m}(2)\times_{\rmG}X)_+}\\
	{{\Sigma^{\infty}_{\T}}\BG^{\it gm,m}(1)_+} \ar@<1ex>[u]^{tr(f)^m(1)}\ar@<1ex>[r] & {{\Sigma^{\infty}_{\T}}\BG^{\it gm,m}(2)_+} \ar@<1ex>[u]^{tr(f)^m(2)} .}\]
Finally, one may also verify that the maps
$\{\EG^{\it gm,m}(1) \times_{\rmG}X \ra \EG^{\it gm,m}(2) \times_{\rmG}X |m\}$ and $\{\BG^{\it gm,m}(1) \ra \BG^{\it gm,m}(2)|m\}$ induce isomorphisms on  generalized motivic cohomology as one takes the $\colimm$: see, for example \cite[\S4, Proposition 2.6]{MV}.
These complete the proof of (v) when $\rmY = Spec \, \k$: the case when $\rmY$ is a general smooth $\rmG$-scheme or a simplicial presheaf with $\rmG$-action 
is similar.
\vskip .1cm
The construction
 of the transfer in the \'etale framework is entirely similar, though care has to be taken to ensure that affine spaces are contractible
 in this framework, which accounts partly for the hypothesis in (vi) and in ~\eqref{etale.finiteness.hyps}. Property (vii) is proved in
 the next section. \qed
  \section{\bf \'Etale and Betti realization}
 \label{et.real.1}
\subsection{\it The motivation for considering \'etale and Betti realization} We hope this short discussion clarifies the role
of \'etale and Betti realization in the context of the transfer. 
\vskip .1cm
We make use of the following {\it two  distinct strategies} for obtaining splittings using the 
transfer:
\vskip .1cm
$\bullet$ One of these is to show certain classes are units in the 
Grothendieck-Witt ring of the base field: this is quite difficult and is do-able only for very special cases as explained later: see the discussion following
the proof of Theorem ~\ref{split.0}.
\vskip .1cm
$\bullet$ The second technique is to restrict to what are slice-completed
generalized motivic cohomology theories (defined in Definition ~\ref{completed.gen.coh}): since several of the well-known generalized motivic cohomology theories, such as 
Algebraic K-Theory and Algebraic Cobordism are slice completed, this is not a major restriction.
\vskip .1cm
Since the transfer is a stable map, it induces a map of the motivic Atiyah-Hirzebruch spectral sequences 
for the above generalized cohomology theories. The $E_2$-terms of these spectral sequences are modules over the motivic cohomology
of the corresponding motivic spaces. By the multiplicative property of the transfer (see Corollary ~\ref{multiplicative.prop.1}), we may then reduce
to obtaining splittings at the level of motivic cohomology. 
\vskip .1cm
It is precisely at this point that it becomes quite convenient to know that the pre-transfer and transfer are compatible with both
\'etale and Betti realization (see section ~\ref{compatibility.real}), so that one may reduce to showing the transfer in \'etale cohomology or Betti cohomology produces
the desired splittings.  Clearly proving such splittings at the level of \'etale or Betti cohomology is considerably easier than showing this
at the level of motivic cohomology. Nevertheless, we show that such splittings at the level of the \'etale or Betti realization is enough
to show the desired splitting exists at the level of motivic cohomology: see Proposition ~\ref{splitting.et}. %section ~\ref{compatibility.real}. 
\vskip .1cm
In view of the above observations, our primary interest is in considering \'etale realization and Betti realization {\it only} with respect to 
 Eilenberg-Maclane spectra: this justifies the following short discussion. A more detailed discussion of \'etale realization in general
is left to the Appendix.
\vskip .2cm
\subsection{\it Passage to spectra on the \'etale site}

We start with the following morphism of sites: $\epsilon: {\rm Sm}_{\k, et} \ra {\rm Sm}_{\k, \Nis}$,
$\bar \epsilon:{\rm Sm}_{\bar \k, et} \ra {\rm Sm}_{\bar \k, \Nis}$ and $\eta: {\rm Sm}_{\bar \k, et}\ra {\rm Sm}_{\k, et}$, 
where $\bar \k$ denotes the algebraic closure of $\k$.
\vskip .2cm
In view of the discussion of the alternate model structure on Nisnevich presheaves defined by inverting hypercovers and the 
discussion of the model structure on \'etale presheaves as in \cite[2.1.7]{CJ23-T1}, one can see that 
these induce the following functors:
\be \begin{equation}
    \label{maps.topoi.0}
 \epsilon^*:{\Spt}^{\group}(\k_{\rm mot}) \ra {\Spt}^{\group}(\k_{et}),  \bar \epsilon^*:{\Spt}^{\group}({\bar \k}_{\rm mot}) \ra  {\Spt}^{\group}({\bar \k}_{et}),  \eta^*:  {\Spt}^{\group}({\k}_{et}) \ra  {\Spt}^{\group}({\bar \k}_{et}),\\
\end{equation} \ee
\be \begin{equation}
 \epsilon^*:{\widetilde {\Spt}}^{\group}(\k_{\rm mot}) \ra {\widetilde {\Spt}}^{\group}(\k_{et}),  \bar \epsilon^*:{\widetilde {\Spt}}^{\group}({\bar \k}_{\rm mot}) \ra {\widetilde {\Spt}}^{\group}({\bar \k}_{et}) \mbox{ and } \eta^*: {\widetilde {\Spt}}^{\group}({\k}_{et}) \ra {\widetilde {\Spt}}^{\group}({\bar \k}_{et}), \notag
 \end{equation}\ee
  \vskip .1cm \noindent
as well as corresponding functors for the categories ${\widetilde {\Spt}}(\k_{\rm mot})$, ${\widetilde {\Spt}}^{\group}(\k_{et})$, $\Spt(\k_{\rm mot})$, $\Spt(\k_{et})$ and similar categories of 
spectra defined in \cite[Definitions  4.6 through 4.9]{CJ23-T1}. 
Here the above categories are provided with the stable injective model structures as mentioned in \cite[{\rm Terminology}]{CJ23-T1}. Since we work with the injective model structures on
all of the above categories, it is clear that these are left Quillen functors.
\vskip .2cm
\begin{proposition}
 The above functors are weakly monoidal functors.
\end{proposition}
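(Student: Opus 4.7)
My plan is to reduce the monoidal statement for each of $\epsilon^*$, $\bar\epsilon^*$, and $\eta^*$ to a pointwise statement at the level of pointed simplicial presheaves, and then transport it through the $T$-spectrum construction using the fact that these are left Quillen functors for the injective model structures.

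First, observe that all three functors are pullbacks along morphisms of sites whose underlying continuous functors preserve finite products: for $\epsilon$ and $\bar\epsilon$ the underlying categories $\Sm_{\k}$ (resp.\ $\Sm_{\bar\k}$) coincide on both sides, with only the topology refined from Nisnevich to \'etale, and for $\eta$ the morphism is the standard flat base change along $\mathrm{Spec}\,\bar\k \to \mathrm{Spec}\,\k$. In all three cases, the functor on the underlying (pointed) simplicial presheaves commutes with finite products on the nose, and therefore with the smash product defined sectionwise. This yields natural isomorphisms
\[
\epsilon^*(X \wedge Y) \;\cong\; \epsilon^*(X) \wedge \epsilon^*(Y), \qquad \epsilon^*(\mathbf{1}) \;\cong\; \mathbf{1},
\]
at the level of pointed simplicial presheaves (with $\rmG$-action, when applicable), and similarly for $\bar\epsilon^*$ and $\eta^*$.

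Next I pass to $\rmT$-spectra. The smash product in $\Spt^{\rmG}(\k_{\rm mot})$ (and its \'etale counterpart) is built levelwise from the smash product of pointed simplicial presheaves together with the structure maps involving smashing with $\rmT$. Since $\epsilon^*$ preserves the pointed simplicial presheaf $\rmT$ (it is already \'etale-local) and commutes with the colimits used to form the spectrum-level smash product, the natural comparison map
\[
\epsilon^*(E) \wedge \epsilon^*(F) \;\longrightarrow\; \epsilon^*(E \wedge F)
\]
is an isomorphism of prespectra, and similarly for the unit. The same argument applies verbatim to $\bar\epsilon^*$ and $\eta^*$, and to the reduced variants ${\widetilde {\Spt}}^{\rmG}$ since the reduction is functorial.

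The main obstacle, as I see it, is not the existence of the comparison maps but rather verifying that they remain weak equivalences after passing to the homotopy category, i.e., checking that the structure is weakly monoidal in the derived sense for the stable injective model structures used here. For this one uses the fact, already recorded in the excerpt, that $\epsilon^*$, $\bar\epsilon^*$, and $\eta^*$ are left Quillen functors for the injective model structures; hence they preserve cofibrations and weak equivalences between cofibrant objects, and the strict monoidality at the presheaf level descends to a weak monoidal structure on the derived functors. The coherence (associativity, unit, symmetry) axioms then follow automatically from the corresponding coherences at the level of pointed simplicial presheaves, which are strict.
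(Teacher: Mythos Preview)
Your approach is correct and essentially coincides with the paper's: both rest on the fact that the smash product of spectra is defined as a co-end (left Kan extension) built from the sectionwise smash product of pointed simplicial presheaves, and that the pullback functors $\epsilon^*$, $\bar\epsilon^*$, $\eta^*$ preserve the colimits entering into that construction as well as the object $\rmT$. The paper's proof is a one-line pointer to the co-end definition of the smash product in \cite[Definition 4.2, 4.8(ii), 4.9(iii)]{CJ23-T1}; your write-up unpacks exactly this, and additionally addresses the passage to the derived level via the left Quillen property, which the paper does not spell out here but uses implicitly later (e.g., in the proof of Proposition~\ref{transf.real}).
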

\begin{proof} We merely observe that this follows readily from the definition of the smash product as a co-end in \cite[Definition 4.2, 4.8(ii) and 4.9(iii)]{CJ23-T1}.
\end{proof}
\begin{definition}
 \label{et.real}
We call the functor $\epsilon ^*$ ($\bar \epsilon^*$), {\it \'etale realization over $\k$} ({\it \'etale realization over $\bar \k$}, \res). \qed
\end{definition} \index{\'etale realization}
\vskip .1cm \noindent
The justification for the above definition will be clear from Proposition ~\ref{et.real.Eilen.Mac}. 
\vskip .2cm
We proceed to consider \'etale Eilenberg-Maclane spectra. We start with the simplicial $2$-sphere $\rmS^2$ identified with $\Delta[2]/\delta \Delta[2]$.
Recall that the free ${\mathbb Z}/\ell$-module on the above $\rmS^2$ followed by the 
forgetful functor sending a simplicial abelian presheaf to the underlying simplicial presheaf provides the Eilenberg-Maclane space ${\rm K}({\mathbb Z}/\ell, 2)$. 
Therefore, one obtains a natural map $\rmS^2 \ra \rmK({\mathbb Z}/\ell,2)$.
 \begin{definition}
\label{Etale.EM.sp}
Let $\ell$ be different from the characteristic of the base field $k$ and $n$ a fixed positive integer.
We let the {\it \'etale Eilenberg-Maclane spectrum} denote the sheaf of $\rmS^2$-spectra defined as follows: the space in degree $2m$, for a non-negative integer $m$ is
the constant sheaf ${\rm K}({\mathbb Z}/{\ell}^n, 2m)$ (whose sheaf of homotopy groups are trivial in all degrees except $2m$, where it is ${\mathbb Z}/{\ell}^n$), and whose structure 
maps are defined by the pairing $\rmS^2 \wedge K({\mathbb Z}/{\ell}^n, 2m) \ra {\rm K}({\mathbb Z}/{\ell}^n, 2) \wedge {\rm K}({\mathbb Z}/{\ell}^n, 2m) \ra {\rm K}({\mathbb Z}/{\ell}^n, 2m+2)$.
\end{definition}
 Let $\rmS = {\rm Spec } \, \k$. Recall that the 
 Motivic Eilenberg-Maclane spectrum ${\mathbb H}({\mathbb Z}/\ell^n)_{\rmS}$ has as its $m$-th space $\U({\mathbb Z}/\ell^{n,tr}(\T^m))$, where 
 where $\U$ denotes the forgetful functor sending a simplicial abelian presheaf (with transfers) to the underlying simplicial presheaf.
Next we apply $\epsilon^*$ to the motivic Eilenberg-Maclane spectrum ${\mathbb H}({\mathbb Z}/\ell^n)_{\rmS}$ to obtain the following object: the sequence 
$\{\epsilon^*({\mathbb Z}/\ell^{n,tr}(\T^m))|m \ge 0\}$ with the structure maps
\be \begin{multline}
     \begin{split}
       \label{et.EM.sp}
\epsilon^*( \T) \wedge \U(\epsilon^*({\mathbb Z}/\ell^{n, tr}(\T^m))) \ra \U(\epsilon^*({\mathbb Z}/\ell^{n, tr}(\T))) \wedge \U(\epsilon^*({\mathbb Z}/\ell^{n, tr}(\T^m))) \ra  \\
\U(\epsilon^*({\mathbb Z}/\ell^{n, tr}(\T) \otimes^{tr} {\mathbb Z}/\ell^{n, tr}(\T^m))) \cong \U(\epsilon^*({\mathbb Z}/\ell^{n, tr}(\T^{m+1}))).
   \end{split}
 \end{multline} \ee

\begin{proposition}
 \label{et.real.Eilen.Mac}
 Let $\rmS = Spec \, \k$, where $k$ is a perfect field.
 Then the \'etale realization of the Motivic Eilenberg-Maclane spectrum ${\mathbb H}({\mathbb Z}/\ell^n)_{\rmS}$, where $\ell$ is different from 
 $char(k)$, is the spectrum whose $n$-th term is the presheaf of Eilenberg-Maclane spaces $\rmK(\mu_{\ell^n}(m), 2m)$ and whose structure maps
 are given by the pairings: 
 \be \begin{equation}
 \label{et.EM.sp.2}
 \epsilon^*(\T) \wedge \rmK(\mu_{\ell^n}(m), 2m) \ra \rmK(\mu_{\ell^n}(1), 2) \wedge \rmK(\mu_{\ell^n}(m), 2m) \ra \rmK(\mu_{\ell^n}(m+1), 2m+1).
 \end{equation} \ee
 \vskip .1cm \noindent
 Assume in addition that the base field $\k$ has a primitive $\ell^n$-th root of unity. Then the above spectrum identifies with the \'etale 
 Eilenberg-Maclane spectrum considered in Definition ~\ref{Etale.EM.sp}.
\end{proposition}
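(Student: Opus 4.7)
The plan is to reduce the identification of $\epsilon^*({\mathbb H}({\mathbb Z}/\ell^n)_{\rmS})$ to the Suslin--Voevodsky rigidity theorem applied levelwise, and then to match structure maps. First, I would recall that the $m$-th space of ${\mathbb H}({\mathbb Z}/\ell^n)_{\rmS}$ is $\rmU({\mathbb Z}/\ell^{n, tr}(\T^m))$ and that the Tate object decomposes as $\T \simeq \rmS^{2,1}$ in the pointed motivic homotopy category, so that ${\mathbb Z}/\ell^{n, tr}(\T^m)$ computes the motivic complex ${\mathbb Z}/\ell^n(m)[2m]$ in the derived category of Nisnevich sheaves with transfers. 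The key input is then the Suslin--Voevodsky comparison isomorphism, which asserts that after pulling back along $\epsilon$ and forgetting transfers one has a quasi-isomorphism
\[
\epsilon^*({\mathbb Z}/\ell^n(m)) \simeq \mu_{\ell^n}^{\otimes m}
\]
of complexes of \'etale sheaves on $\Sm_{\k}$, valid for any $\ell \ne char(\k)$ since $\k$ is perfect. Combining the shift and the Dold--Kan correspondence, $\rmU(\epsilon^*({\mathbb Z}/\ell^{n, tr}(\T^m)))$ is weakly equivalent to the sheaf of Eilenberg--Maclane spaces $\rmK(\mu_{\ell^n}(m), 2m)$.

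Second, I would verify that the structure maps of the spectrum in ~\eqref{et.EM.sp} translate under these identifications to the pairings in ~\eqref{et.EM.sp.2}. This is a naturality argument: the pairing ${\mathbb Z}/\ell^{n, tr}(\T) \otimes^{tr} {\mathbb Z}/\ell^{n, tr}(\T^m) \to {\mathbb Z}/\ell^{n, tr}(\T^{m+1})$ is (by construction) the cup product in motivic cohomology, and the Suslin--Voevodsky comparison is compatible with cup products, sending them to the \'etale cup products $\mu_{\ell^n}(1) \otimes \mu_{\ell^n}(m) \to \mu_{\ell^n}(m+1)$. Under $\epsilon^*$, the motivic sphere $\T$ maps to the \'etale analogue, whose Dold--Kan-image of ${\mathbb Z}/\ell^{n, tr}(\T)$ is $\rmK(\mu_{\ell^n}(1), 2)$, so the bonding map for the motivic spectrum becomes precisely the pairing in ~\eqref{et.EM.sp.2}.

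Third, for the final clause, suppose $\k$ contains a primitive $\ell^n$-th root of unity. Then the cyclotomic character is trivial, so $\mu_{\ell^n}(m) \cong {\mathbb Z}/\ell^n$ as constant \'etale sheaves for every $m \ge 0$, and the chosen root of unity determines these isomorphisms coherently in $m$ (because the identification is given by the $m$-th tensor power of a single chosen generator of $\mu_{\ell^n}$). Substituting in the description from the previous paragraphs, the $2m$-th term becomes $\rmK({\mathbb Z}/\ell^n, 2m)$ and the structure maps become those of Definition ~\ref{Etale.EM.sp}.

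The main obstacle is the compatibility check in the second step, namely that one must trace the co-end definition of the smash product in $\Spt(\k_{\rm mot})$ through $\epsilon^*$ and the Dold--Kan functor in a way that matches cup products on both sides; this is not a new result but it requires some care, and the cleanest route is to invoke that $\epsilon^*$ is a weakly monoidal left Quillen functor (as noted just before Definition ~\ref{et.real}) combined with the monoidal refinement of the Suslin--Voevodsky comparison. The rigidity/comparison isomorphism itself, together with the requirement that $\k$ be perfect and $\ell \ne char(\k)$, accounts for all the hypotheses imposed.
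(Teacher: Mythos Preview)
Your proposal is correct and follows essentially the same route as the paper: the key identification $\U(\epsilon^*({\mathbb Z}/\ell^{n,tr}(\T^m))) \simeq \rmK(\mu_{\ell^n}(m),2m)$ is exactly what the paper invokes (citing \cite[Theorem 10.3 and Theorem 15.2]{mvw}, which is the Suslin--Voevodsky comparison you describe), and the deduction of the structure maps and the final clause under the root-of-unity hypothesis are handled identically. Your treatment of the structure-map compatibility is in fact more explicit than the paper's, which simply remarks that the case $m=1$ of the identification suffices.
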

\begin{proof}
 A key observation here is the following:
 \be \begin{equation}
  \label{mot.et.Eilen.Mac}
 \U( \epsilon^*({\mathbb Z}/\ell^{n,tr}(\T^m))) \simeq \rmK(\mu_{\ell^n}(m), 2m),
     \end{equation} \ee
 This is discussed in \cite[Theorem 10.3 and Theorem 15.2]{mvw}.  Recall the structure maps of the spectrum ${\mathbb H}({\mathbb Z}/\ell^m)_{\rmS}$
are given by the pairing 
\[\U({\mathbb Z}/\ell^{n,tr}(\T ))\wedge \U({\mathbb Z}/\ell^{n,tr}(\T^m)) \ra \U({\mathbb Z}/\ell^{n,tr}(\T) \otimes^{tr} {\mathbb Z}/\ell^{n,tr}(\T^m)) \ra \U({\mathbb Z}/\ell^{n,tr}(\T^{m+1}).\] 
Therefore, the observation that $\U(\epsilon^*({\mathbb Z}/\ell^{n,tr}(\T))) \simeq {\rm K}(\mu_{\ell^n}(1), 2)$
 proves the first statement.
 \vskip .1cm
 Since $\k$ is assumed to have a primitive $\ell^n$-th root of unity, $\mu_{\ell^n}(1) \cong {\mathbb Z}/\ell^n$ and therefore
 $\rmK(\mu_{\ell^n}(1), 2) \cong \rmK({\mathbb Z}/\ell^n, 2)$. Similarly $\rmK(\mu_{\ell^n}(m), 2m)$ identifies with $\rmK({\mathbb Z}/\ell^n, 2m)$. This proves
 the second statement and completes the proof of the Proposition.
\end{proof}
\begin{remark}
\label{remark.et.real}
(Further remarks on \'etale realization)
\'Etale realization in {\it the unstable setting} has been discussed in \cite{Isak} and also \cite{Schm} where they  consider \'etale realization of 
motivic spaces to take values in the category of pro-simplicial sets, following the setting of \'etale homotopy theory as in \cite{AM}. So far there has been no discussion of \'etale realization of motivic spectra. A discussion of
\'etale realization for motivic spectra in general, i.e., apart from what is obtained by pull-back to the \'etale site 
and also apart from the special case of the Eilenberg-Maclane spectra is extraneous to our goals. 
 In fact the only place where we need to
 show that the transfer is compatible with \'etale realization is at the level of motivic and \'etale cohomology over algebraically closed fields and away from the 
 characteristic of the base field:  see Proposition ~\ref{transf.real}, Corollary ~\ref{transf.compat}, as well as Theorem ~\ref{stable.splitting.2} and
 Proposition ~\ref{splitting.et}. Therefore, 
 the above comparison results suffice to show that often splitting at the level of etale cohomology by means of the transfer implies splitting at the level of motivic cohomology.
 \vskip .1cm
 Nevertheless, in view of general interests in obtaining a stable version of \'etale realization, we provide a short discussion on this topic in the Appendix,
 though it is not used in the rest of the paper. \qed
\end{remark}
As is well-known, \'etale cohomology is well-behaved only with respect to torsion coefficients prime to the residue characteristics. This makes it necessary to consider
 completions away from the characteristic of the base field on considering spectra on the \'etale site. This justifies the following definitions.
\begin{definition} 
\label{Zl.local}
Let $\rmM \in \SH(\k)$ ($\SH(\k_{et}$). For each prime number $\ell$, let ${\mathbb Z}_{(\ell)}$  denote the localization of the integers at the prime ideal corresponding to $\ell$ and let ${\mathbb Z}\compl_{\ell} =\invlimn {\mathbb Z}/\ell^n$.
Then we say $\rmM$ is ${\mathbb Z}_{(\ell)}$-local ($\ell$-complete, $\ell$-primary torsion), if each $[{\rmS^1}^{\wedge s} \wedge \T^t \wedge {\Sigma^{\infty}_{\T}}\rmU_+, \rmM]$ is a ${\mathbb Z}_{(\ell)}$-module (${\mathbb Z}\compl_{\ell}$-module, ${\mathbb Z}\compl_{\ell}$-module which is torsion, \res) as $\rmU$ varies among the objects of the given site, where 
$[{\rmS^1}^{\wedge s}\wedge \T^t \wedge {\Sigma^{\infty}_{\T}}\rmU_+, \rmM]$
 denotes $Hom$ in the stable homotopy category $\SH(\k)$ ($\SH(\k_{et})$, \res). \index{$\ell$-complete spectra} \index{${\mathbb Z}_{(\ell)}$-local spectra}
\end{definition}
Let $\rmM \in \SH(\k)$ ($\SH(\k_{et})$). Then one may observe that if $\ell$ is a prime number, and $\rmM$ is $\ell$-complete, then $\rmM$ is ${\mathbb Z}_{(\ell)}$-local. This follows readily by observing that the natural map ${\mathbb Z} \ra {\mathbb Z}\compl_{\ell}$ factors through ${\mathbb Z}_{(\ell)}$ since
 every prime different from $\ell$ is inverted in ${\mathbb Z}\compl_{\ell}$. One may also observe that if $\cE$ is a commutative ring spectrum which
is ${\mathbb Z}_{(\ell)}$-local ($\ell$-complete), then any module spectrum $\rmM$ over $\cE$ is also ${\mathbb Z}_{(\ell)}$-local 
($\ell$-complete, \res). $\ell$-completion in the motivic framework is discussed in some detail in the Appendix.%: see also \cite[section 4]{CJ17}.
\vskip .2cm 
We conclude this section with the following remarks on Betti realization.
\subsection{\bf Betti realization}
 When the base scheme is the field of complex numbers, there is a fairly extensive discussion on Betti realization both in the unstable and stable settings: 
 see \cite{Ay},  %\cite{Lev13} 
 and \cite{PPR}. %In fact a result analogous to Proposition ~\ref{et.real.Eilen.Mac} is discussed in \cite[Proposition 5.6]{Lev13}. \index{Betti realization}
 %\vskip .2cm
 %{\it For the convenience of the reader, the remainder of this chapter will be devoted to an overview of the key constructions and results of the paper that are discussed in later chapters in full detail.}

\section{\bf Computing traces: compatibility of the transfer with realizations}
\label{compatibility.real}
 Assume the situation as in Theorem ~\ref{thm.1}. Then, very often the main application of the transfer is to prove that $\pi_{\rmY}^*$ is a split injection in generalized cohomology, i.e., one needs to verify that 
 $\tr(\rmf_{\rmY})^*(\pi_{\rmY}^*(1))$ is a unit.
In order to verify that
 $\tr(\rmf_{\rmY})^*(\pi_{\rmY}^*(1))$ is a unit, one may adopt the following strategy. First we will show that the transfer constructed above is compatible with
passage to a simpler situation, for example passage from over a given base field to its algebraic or separable closure and/or passage to 
a suitable {\it realization} functor: we will often use the \'etale realization. Then, often, $\rmh^{0, 0}(\rmB) \simeq \rmh^{0,0}_{real}(\rmB)$
where $\rmh^{*, \bullet}_{real}(\rmB)$ denotes the corresponding generalized cohomology of the realization and $\rmB$ denotes any smooth scheme. Therefore, it will
suffice to show that $\tr(\rmf_{\rmY})_{real}^*(\pi_{\rmY}^*(1))$ is a unit: here $\tr(\rmf_{\rmY})_{real}$ denotes the corresponding transfer on the realization.
We devote all of this section to a detailed discussion of this technique.
\vskip .2cm
{ As before we will assume the base scheme is the spectrum of a perfect field $k$ satisfying the assumption ~\eqref{etale.finiteness.hyps}. $\bar k$ will denote a fixed algebraic closure of $k$ and $\ell$ is a prime different from $char(k)$.}
Accordingly $\rmS= {\rm Spec} \, {\it k}$ and $\bar \rmS= {\rm Spec} \, \bar \k$.
We first recall the following functors (from ~\cite[(7.0.14)]{CJ23-T1}):
\be \begin{equation}
     \label{maps.topoi.3}
\epsilon^*:\Spt(\k_{\rm mot}) \ra \Spt(\k_{et}),  \bar \epsilon^*:\Spt(\bar \k_{\rm mot}) \ra \Spt(\bar \k_{et}),   \mbox{ and } \eta^*: \Spt(\k_{et}) \ra \Spt(\bar \k_{et}).
\end{equation}\ee
\vskip .2cm
Let $\theta$ and $\phi _{\cE}$ denote the functors considered in \cite[Proposition 7.3]{CJ23-T1}. We let $\cE \in \Spt(\k_{\rm mot})$
denote a commutative ring spectrum which is $\ell$-complete for a prime $\ell \ne char(k)$. 
%Throughout the following discussion, we will take $\rmY = Spec \, \k$.

\begin{proposition}(Commutativity of the pre-transfer with \'etale realization)
\label{transf.real}
Assume the above situation. 
Then denoting by $\tr'(\rmf)$ the pre-transfer (as in  \cite[Definition 8.2 (ii)]{CJ23-T1}) (with $\rmG$ trivial), 
$ \epsilon^*(\tr'(\rmf)) \simeq \tr'(\epsilon^*(\rmf))$ and $ \bar \epsilon^*(\tr'(\rmf)) \simeq \tr'( \bar \epsilon^*(\rmf))$
when applied to dualizable objects of the form $\cE \wedge X_+$ appearing in \cite[Theorem 7.1]{CJ23-T1}. The same conclusion holds for $\epsilon^*$ and $\bar \epsilon^*$ replaced by
$\eta^*$ or any of the two functors $\theta$ and $\phi _{\cE}$.
\end{proposition}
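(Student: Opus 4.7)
The plan is to exploit the fact that the pre-transfer is constructed entirely out of the formal structure of Spanier--Whitehead duality in a symmetric monoidal stable homotopy category, and that all of the functors in question are (weakly) symmetric monoidal. First I would recall the explicit definition of the pre-transfer $\tr'(\rmf)$ from \cite[Definition~8.2(ii)]{CJ23-T1}: for a dualizable object $\rmZ = \cE \wedge \rmX_+$ equipped with a self-map $\rmf$, $\tr'(\rmf)$ is the composite of coevaluation $\eta_{\rmZ}: \mbS \to \rmZ \wedge \rmD\rmZ$, the diagonal $\Delta_{\rmZ}: \rmZ \to \rmZ \wedge \rmZ$ (coming from the diagonal of $\rmX_+$), the given map $\rmf$, the symmetry isomorphism $\tau$, and the evaluation $\varepsilon_{\rmZ}: \rmD\rmZ \wedge \rmZ \to \mbS$, wired together in the standard pattern that produces a stable map $\mbS \to \rmZ$.

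Next, I would verify that each of the functors $\epsilon^*$, $\bar\epsilon^*$, $\eta^*$, $\theta$, $\phi_{\cE}$ is (weakly) symmetric monoidal, so that it carries the data $(\rmZ, \rmD\rmZ, \eta_{\rmZ}, \varepsilon_{\rmZ})$ witnessing dualizability of $\rmZ$ to the corresponding data $(F(\rmZ), F(\rmD\rmZ), F(\eta_{\rmZ}), F(\varepsilon_{\rmZ}))$ witnessing dualizability of $F(\rmZ)$ in the target category. For $\epsilon^*$ and $\bar\epsilon^*$ the monoidal compatibility has essentially already been recorded earlier in the section (the realization functors are weakly monoidal because the smash product is defined as a coend and pullback along $\epsilon$ preserves coends); for $\theta$ and $\phi_{\cE}$ the same statement is built into their construction in \cite[Proposition~7.3]{CJ23-T1}. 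On the full subcategory of dualizable objects appearing in \cite[Theorem~7.1]{CJ23-T1}, one checks that the lax comparison maps $F(\rmA) \wedge F(\rmB) \to F(\rmA \wedge \rmB)$ and $\mbS \to F(\mbS)$ are equivalences, which upgrades weak monoidality to strong monoidality on this subcategory.

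With these two ingredients, the proof becomes formal: applying $F$ (any of the five functors) to the composite that defines $\tr'(\rmf)$ yields the composite
\[ \mbS \;\xrightarrow{F(\eta_{\rmZ})}\; F(\rmZ) \wedge F(\rmD\rmZ) \;\xrightarrow{F(\Delta_{\rmZ}) \wedge \mathrm{id}}\; F(\rmZ) \wedge F(\rmZ) \wedge F(\rmD\rmZ) \;\xrightarrow{\mathrm{id} \wedge F(\rmf) \wedge \mathrm{id}}\; \cdots \;\xrightarrow{F(\varepsilon_{\rmZ})}\; F(\rmZ), \]
and by the previous paragraph this is precisely the composite defining $\tr'(F(\rmf))$ in the target stable homotopy category. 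This gives $F(\tr'(\rmf)) \simeq \tr'(F(\rmf))$ for $F \in \{\epsilon^*, \bar\epsilon^*, \eta^*, \theta, \phi_{\cE}\}$.

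The main obstacle I expect is not the categorical bookkeeping but checking that the weak monoidality of the realization functors is genuinely strong on the relevant dualizable objects, including the fact that the unit of the target category agrees with the image of the unit of the source (which is why $\ell$-completeness of $\cE$ matters: it is needed to make $\epsilon^*(\cE)$ behave well on the \'etale site). A careful treatment will also have to verify that the duality datum is preserved under $F$, i.e., that $F(\rmD\rmZ)$ really is a dual of $F(\rmZ)$ with $F(\eta_{\rmZ})$ and $F(\varepsilon_{\rmZ})$ as structure maps; this is a diagrammatic verification from the triangle identities, and is the only place where the dualizability hypothesis from \cite[Theorem~7.1]{CJ23-T1} is really used.
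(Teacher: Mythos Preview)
Your proposal is correct and follows essentially the same approach as the paper: both arguments exploit that the pre-transfer is assembled entirely from the formal duality data (coevaluation, diagonal, $\rmf$, symmetry, evaluation) and that $\epsilon^*$ (and the other functors) are symmetric monoidal, hence carry this data to the corresponding duality data in the target; the paper in fact cites \cite[2.2 Theorem and 2.4 Corollary]{DP} as a source for exactly this principle. The only difference is presentational: the paper writes out the two relevant commutative diagrams explicitly and singles out the isomorphism $\epsilon^*(\rmK \wedge_{\cE} \rmD\rmK) \cong \epsilon^*(\rmK) \wedge_{\epsilon^*(\cE)} \epsilon^*(\rmD\rmK)$ (obtained because $\epsilon^*$ commutes with the coend defining the smash product) as the key step, whereas you package the same verification in the language of strong monoidality on dualizable objects.
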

\begin{proof} Implicitly assumed in the proof is the fact that the above functors  all send dualizable objects to dualizable objects. This is
already proved in \cite[Proposition 7.3]{CJ23-T1}. Moreover, as pointed out earlier,
\cite[2.2 Theorem and 2.4 Corollary]{DP} seems to provide a quick proof of the assertion above, so that at least in principle, the results in this proposition should be deducible
from op. cit.  Nevertheless, it seems best to provide a proof of Proposition ~\ref{transf.real}, at least for $ \epsilon^*$: the proof for the other functors will be similar. 
First observe that there is a natural map $ \epsilon^* \RHom(\rmK, \rmL) \ra  \RHom(\epsilon^*(\rmK), \epsilon^*(\rmL))$ for any
two objects $\rmK, \rmL \in \Spt(\k_{\rm mot}, \cE)$. If one takes $\rmL = \cE$, $\RHom(\rmK, \rmL) $ will denote
$\rmD(\rmK)$. Similarly $\RHom ( \epsilon^*(\rmK),  \epsilon^*(\rmL))$ will then denote 
$\rmD( \epsilon^*(\rmK))$.
\vskip .2cm
Now the proof of the assertion for the pre-transfer follows from the commutativity of the following diagrams where the composition of maps in the top row
 (bottom row) is $ \epsilon^*(\tr'(\rmf_{}))$ ($\tr'(\epsilon ^*(\rmf))$, \res), with the smash products denoting their derived versions, and $\rmK = \cE \wedge \rmX_+$ as in the Proposition:
\vskip .2cm
\[\xymatrix{{ \epsilon^*(\cE)} \ar@<1ex>[d]^{id} \ar@<1ex>[r] & { \epsilon^*(\rmK \wedge _{\cE} \rmD\rmK)} \ar@<1ex>[r]^{\cong}& { \epsilon^*(\rmK) \wedge_{\epsilon^*(\cE)}  \epsilon^*(\rmD\rmK)} \ar@<1ex>[ld]\\
{ \epsilon^*(\cE)} \ar@<1ex>[r] & { \epsilon^*(\rmK) \wedge_{\epsilon^*(\cE)} \rmD(\epsilon^*(\rmK))} }\]
\vskip .2cm
\[\xymatrix{ { \epsilon^*(\rmK) \wedge_{\epsilon^*(\cE)}  \epsilon^*(\rmD\rmK)} \ar@<1ex>[d] \ar@<1ex>[rr]^{(id \wedge_{\epsilon^*(\cE)}  \epsilon^*(\rmf )\wedge_{\epsilon^*(\cE)}  \epsilon^*(\rmf)) \circ (id \wedge_{\epsilon^*(\cE)} \Delta) \circ \tau} &&{{ \epsilon^*(\rmD\rmK)} \wedge_{\epsilon^*(\cE)} { \epsilon^*(\rmK)} \wedge_{\epsilon^*(\cE)} { \epsilon^*(\rmK)}} \ar@<1ex>[r]^>>{\epsilon ^*(e) \wedge_{\epsilon^*(\cE)} id} \ar@<1ex>[d] & { \epsilon^*(\cE) \wedge  _{\epsilon^*(\cE)} \epsilon^*(K)} \ar@<1ex>[d]^{id}\\
{ \epsilon^*(\rmK) \wedge_{\epsilon^*(\cE)} \rmD( \epsilon^*(\rmK))} \ar@<1ex>[rr]^{(id \wedge _{\epsilon^*(\cE)} \epsilon^*(\rmf) \wedge _{\epsilon^*(\cE)} \epsilon^*( f)) \circ (id \wedge _{\epsilon^*(\cE)}\Delta) \circ \tau}  && {\rmD( \epsilon^*(\rmK)) \wedge_{\epsilon^*(\cE)}   \epsilon^*(\rmK) \wedge _{\epsilon^*(\cE)}  \epsilon^*(\rmK)} \ar@<1ex>[r]^>>{e \wedge_{\epsilon^*(\cE)} id} & { \epsilon^*(\cE) \wedge _{\epsilon^*(\cE)} \epsilon^*(\rmK)}}\]
\vskip .2cm \noindent
The isomorphism ${ \epsilon^*(\rmK \wedge _{\cE} \rmD\rmK)} {\overset {\cong} \ra}  { \epsilon^*(\rmK) \wedge_{\epsilon^*(\cE)}  \epsilon^*(\rmD\rmK)}$ in the top row may be obtained as follows.
First, using the fact that $\epsilon^*$ is pull-back from the big Nisnevich site of $\Speck$ to the corresponding big \'etale site,
one observes that $\epsilon^*$ commutes with smash products of pointed simplicial presheaves. Then, recalling the definition
of the smash product of spectra as a left Kan extension (see: \cite[Definition 4.2]{CJ23-T1}), one may see that $\epsilon^*$ commutes with smash products
and then smash products of module spectra over ring spectra. 

\end{proof}
\begin{corollary}
\label{transf.compat}
Assume that the group $\rmG$ is special and that $\rmf: \rmX \ra \rmX$ is a $\rmG$-equivariant map and let $\pi_{\rmY}: \rmE\times_{\rmG} (\rmY \times \rmX) \ra  \rmE\times_{\rmG} \rmY$ denote
 any one of the three cases considered  in Theorem ~\ref{thm.1}. Then $\epsilon ^*(tr(\rmf_{\rmY})) \simeq {\it tr}(\epsilon ^*(\rmf_{\rmY}))$,
where $tr(\rmf)$ denotes the transfer defined with respect to a motivic ring spectrum $\cE$ that is $\ell$-complete for a prime $\ell \ne char (k)$.
 \end{corollary}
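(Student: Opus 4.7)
The plan is to reduce the equivariant statement to the non-equivariant Proposition~\ref{transf.real} by exploiting the way the transfer $\tr(\rmf_{\rmY})$ is assembled from the pre-transfer $\tr'(\rmf)$ via the Borel construction, together with the fact that $\epsilon^*$ is a symmetric monoidal left Quillen functor commuting with the building blocks in sight.

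First I would recall the construction of $\tr(\rmf_{\rmY})$ from \cite[section 8.4]{CJ23-T1}: since $\rmG$ is special, the torsor $\rmE \to \rmB$ is Nisnevich-locally trivial, and one chooses $\rmG$-representations $\rmV$ together with associated vector bundles $\xi^{\rmV}$, $\eta^{\rmV}$ on $\widetilde{\rmB}$ with $\xi^{\rmV}\oplus\eta^{\rmV}$ trivial. The transfer is obtained by twisting the non-equivariant pre-transfer $\tr'(\rmf)$ for $\rmf: \rmX \to \rmX$ by the Thom spaces of these bundles, smashing with $(\rmE\times_{\rmG}\rmY)_+$, and using the triviality of $\xi^{\rmV}\oplus\eta^{\rmV}$ to identify the result. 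In case (b) this is done over $\BG^{\it gm,m}$, and case (c) is obtained as $\colimm$ of case (b).

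Next I would apply $\epsilon^*$ to the entire construction. Because $\epsilon^*$ is pullback along a morphism of sites, it preserves colimits and products of simplicial presheaves; combined with the left Kan extension definition of smash product of spectra, this yields that $\epsilon^*$ is symmetric monoidal and commutes with smash products, Thom spaces of vector bundles, and pullbacks along smooth maps. In particular, $\epsilon^*(\xi^{\rmV}\oplus\eta^{\rmV})$ is again trivial, and $\epsilon^*$ takes the chosen bundles on $\widetilde{\rmB}$ to their pullbacks to the \'etale site, which play the same structural role for the \'etale transfer. Proposition~\ref{transf.real} then gives the compatibility $\epsilon^*(\tr'(\rmf)) \simeq \tr'(\epsilon^*(\rmf))$ for the non-equivariant ingredient. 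Inserting this identity into every step of the Borel-construction assembly of $\tr(\rmf_{\rmY})$ yields the desired identification $\epsilon^*(\tr(\rmf_{\rmY})) \simeq \tr(\epsilon^*(\rmf_{\rmY}))$ in each of the cases (a) and (b).

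For case (c), one passes to the colimit over $m$: by part~(i) of Theorem~\ref{thm.1} the maps $\tr(\rmf_{\rmY})^m$ are compatible as $m$ varies, and since $\epsilon^*$ commutes with filtered colimits of spectra, one obtains the compatibility $\epsilon^*(\colimm \tr(\rmf_{\rmY})^m) \simeq \colimm \epsilon^*(\tr(\rmf_{\rmY})^m) \simeq \colimm \tr(\epsilon^*(\rmf_{\rmY}))^m = \tr(\epsilon^*(\rmf_{\rmY}))$. The main obstacle I anticipate is purely bookkeeping: one must check that every intermediate diagram in the construction of the transfer (the Thom isomorphisms, the diagonal twisting, and the composition with the evaluation map) is strictly natural with respect to the symmetric monoidal functor $\epsilon^*$, so that the pointwise identifications assemble into a coherent homotopy in $\SH(\k_{et},\epsilon^*(\cE))$ rather than merely a diagram that commutes at each stage up to unspecified homotopy. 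Granted the monoidality of $\epsilon^*$ and Proposition~\ref{transf.real}, this is a direct verification following the template of the proof of Proposition~\ref{base.change}.
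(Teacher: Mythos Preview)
Your proposal is correct and follows essentially the same route as the paper: invoke Proposition~\ref{transf.real} for the pre-transfer, then argue that every step in the Borel-construction assembly of $\tr(\rmf_{\rmY})$ (Thom twists, diagonal, evaluation) is preserved by the symmetric monoidal functor $\epsilon^*$, so the whole transfer pulls back. The paper's proof is terser---it simply notes $\tr'(\rmf_{\rmY}) = id_{\rmY_+}\wedge \tr'(\rmf)$ to handle general $\rmY$ and then refers to the numbered steps \cite[(8.3.3)--(8.3.17)]{CJ23-T1} without spelling out the Thom-space or colimit details you give for case~(c)---but the strategy is identical.
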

\begin{proof} Proposition ~\ref{transf.real} proves the corresponding statement for the pre-transfer when $\rmY = Spec \, \k$. 
 Now the corresponding result holds for a general $\rmY$, since the corresponding pre-transfer $\tr'(\rmf_{\rmY}) = id_{\rmY_+} \wedge \tr'(\rmf)$. Next a detailed examination of the various steps
 in the construction of the transfer (see \cite[(8.3.3)]{CJ23-T1}  through \cite[(8.3.17)]{CJ23-T1}) show that they all pull-back to define the corresponding construction on the \'etale site. 
 (In fact, $\tr(\rmf_{\rmY})$ as in \cite[Definition 8.8]{CJ23-T1} is 
defined by first taking $id \wedge _G \tr'^{\rmG}(\rmf_{\rmY})$, where $\tr'^{\rmG}(\rmf_{\rmY})$ is the $\rmG$-equivariant pre-transfer.)
\end{proof}
\begin{remark}
 Several results on how to deduce splittings on generalized motivic cohomology theories, from splittings produced by the transfer
 at the level of \'etale cohomology are discussed in Propositions ~\ref{stable.splitting.2} and ~\ref{splitting.et}. These make use
 of the compatibility of the pre-transfer with \'etale realization as proven in Proposition ~\ref{transf.real}.
\end{remark}

\section{\bf Transfer and stable splittings in the Motivic Stable Homotopy category}
\vskip .2cm
%We will restrict to the transfers considered in Definition ~\ref{transfer:def}.
We will briefly recall the context and framework for the discussion in this section.
$\rmG$ is a linear algebraic group over a perfect field $k$,
$\rmp: \rmE \ra \rmB$ is a $\rmG$-torsor over  $k$, and $\pi_{\rmY}: \rmE \times_{\rmG}  (\rmX \times \rmY) \ra \rmE \times_{\rmG}  \rmY$ is the projection
 considered in the contexts (a) or (b) in Theorem ~\ref{thm.1}. (The splitting results for the context (c) considered in Theorem ~\ref{thm.1}(c) can be 
deduced from the case in Theorem ~\ref{thm.1}(b) in view of the compatibility of the transfer maps proved in Theorem ~\ref{thm.1}(i).)
 Recall that $\rmX$, $\rmY$ denote unpointed simplicial presheaves  (defined on $\Sm_{\k}$) provided  with  actions by $\group$ and so that $\Sigma_{\T}\rmX_+$ is 
dualizable in $\SH(\k)$ (or $\cE\wedge X_+$ is dualizable in $\SH(\k, \cE)$ when considering ring spectra $\cE$ 
other than the motivic sphere spectrum). $\rmf:X \ra X$ is a $\group$-equivariant map.
\vskip .2cm 
We next provide a quick review of the results on splitting obtained by making use of the transfer: these form some of the main results of the present paper.
In order to obtain splittings in the motivic stable homotopy category, there are essentially {\it two distinct techniques} we pursue here making use of the transfer as a stable map, each with its own advantages. Both of these apply to actions of all linear algebraic groups, {\it irrespective of whether they are  special}.
  %In fact, we provide {\it two distinct strategies} to establish such stable splittings, each with its own advantages. 
Both start with the observation that
 the multiplicative
 property of the transfer as in Theorem ~\ref{thm.1}(ii) and (iii)
 shows that in order to prove $\pi_{\rmY}^*$ is a split monomorphism, it suffices to show $\tr(\rmf_{\rmY})^*(1)= \tr(\rmf_{\rmY})^*\pi_{\rmY}^*(1)$ is a unit. Both approaches also make use of the
base-change property of the transfer as in Proposition ~\ref{base.change} and then reduce to checking this for 
simpler situations. They both apply to all linear algebraic groups, irrespective of whether they are special (that is,
in Grothendieck's classification as in \cite{Ch}), and in particular to 
all split orthogonal groups, which are known to be non-special. However, for the sake of keeping the  discussion simpler, we will restrict to groups that are special while 
considering the second approach.
\vskip .2cm \noindent
\subsection{Splittings via the Grothendieck-Witt ring of the base field $k$.} \index{Grothendieck-Witt ring}
\label{split.Groth.Witt.grp} \index{Grothendieck-Witt ring and splittings}
In this approach we assume that the class $\tau_{\rmX}^*(1)$ is a unit in the
Grothendieck-Witt ring  (or the Grothendieck-Witt ring with characteristic exponent of the base field
inverted) and use that to obtain splittings directly, first at the level of the pre-transfer. This method is rather limited to those schemes $\rmX$ for which it
is possible to compute $\tau_{\rmX}^*(1)$ in the Grothendieck-Witt ring. Such a computation is carried out  in
 \cite[Theorem 1.2 and Corollary 1.3]{JP23},  where $\rmX= \rmG/\N(T)$, for a connected split reductive group (or more generally a split linear algebraic group)
$\rmG$ over a perfect field and $\N(T)$ the normalizer of a split maximal torus in $\rmG$. Therefore, at present this technique 
 only applies to the above case. In the discussion below, we will only consider the case where $char(k) =0$. In positive characteristics $p$,
 the same discussion applies by replacing the sphere spectra everywhere by the corresponding sphere spectra with the prime $p$ inverted, or
completed away from $p$ as discussed in \cite[Definition 1.1]{CJ23-T1}.
\vskip .1cm
{\it Case 1}: Here we will assume the group $\rmG$ is {\it special}. Since the group $\rmG$ is assumed to be special, for each fixed integer $m \ge 1$,  the 
map $\rmp:\rmE \ra \rmB$ is a Zariski locally trivial principal $\rmG$-bundle and let $\tilde \rmp: {\widetilde \rmE} \ra {\widetilde \rmB}$ 
denote the induced map where $\widetilde \rmB$ is the affine replacement.
 Let $\{\rmU_i|i\}$ denote a Zariski open cover of $\widetilde \rmB$ over which the map $\tilde \rmp$ trivializes so that $\pi_{\rmY|\rmU_i} = \rmU_i \times (\rmY \times \rmX) \ra \rmU_i \times \rmY$. 
 \vskip .1cm
 Let $\tr: {\rm \Sigma}^{\infty}_{\T} ({\widetilde {\rmE}}\times_{\rmG} \rmY)_+ \ra {\Sigma^{\infty}_{\T}}  ({\widetilde {\rmE}}\times_{\rmG} (\rmY \times \rmX))_+$ denote the transfer defined in \cite[Definition 8.6]{CJ23-T1}. Then one may observe that $\tr_{|\rmU_i}: {\rm \Sigma}^{\infty}_{\T}(\rmU_i \times \rmY)_{+} \ra
 {\Sigma^{\infty} _{\T}}(\rmU_i \times \rmY)_{+} \wedge {\Sigma^{\infty}_{\T}}\rmX_+$ is just $id_{{\Sigma^{\infty}_{\T}}(\rmU_i \times \rmY)_{+}} \wedge 
 tr_{\rmX}'$, where $tr_{\rmX}'$ denotes the pre-transfer considered in  \cite[Definition 8.2(ii)]{CJ23-T1}. Therefore, if $\tau_{\rmX}^*(1)$ is a unit in
 the Grothendieck-Witt group (or in the Grothendieck-Witt group with the characteristic exponent of the base field inverted), (where $\tau_{\rmX}$ is the trace defined in \cite[Definition 8.2(iv)]{CJ23-T1}), then the composition, ${\Sigma^{\infty}_{\T}}\pi_{\rmY,+} \circ \tr_{\rmX}$, where $ \pi_{\rmY}$ is the projection
 ${\widetilde \rmE}\times_{\rmG}(\rmY \times \rmX) \ra {\widetilde \rmE}\times_{\rmG}\rmY$, will be homotopic to the identity
 over each $\rmU_i$. 
 \vskip .1cm
 Now let $\rmh^{*, \bullet}$ denote a generalized motivic cohomology theory defined 
 with respect to a motivic spectrum. Then using a Mayer-Vietoris argument and observing that each ${\widetilde \rmB}$ is quasi-compact,
  the splitting  over $\rmU_i$ of the map $\pi_{\rmY|\rmU_i}$ shows that the composite map  $tr^* \circ\pi_{\rmY}^*: \rmh^{*, \bullet}({\widetilde \rmE}{\underset {\rmG} \times }\rmY) {\overset {\pi_{\rmY}^*} \ra} \rmh^{*, \bullet}({\widetilde \rmE}{\underset {\rmG} \times }(\rmY \times \rmX)) {\overset {tr^*} \ra} \rmh^{*, \bullet}({\widetilde \rmE}{\underset {\rmG} \times }\rmY)$ is an isomorphism. 
 When we vary $\rmB $ over finite dimensional approximations $\{\BG^{\it gm,m}|m\}$, the same therefore
 holds on taking the colimit of the $\BG^{\it gm,m}$ over $m$ as $m \ra \infty$, as we have shown the transfer maps
 are compatible as $m$ varies: see Theorem ~\ref{thm.1}(i). (Here the colimit of the
 $\BG^{\it gm,m}$ will pullout of the generalized cohomology spectrum as a homotopy inverse limit, and then one uses the usual $lim^1$-exact sequence to draw the desired conclusion.)
 \vskip .1cm
 {\it Case 2}: Here we will let $\rmG$ denote {\it any linear algebraic group}. We first recall from \cite[Definition 8.2(iii)]{CJ23-T1},
 that the $\rmG$-equivariant pre-transfer is given by 
 $id_{\rmY} \times \tr'^{\rmG}(id): \rmY \times \mbS^{\rmG}_{\k} \ra \rmY  \times (\mbS^{\rmG}_{\k} \wedge \rmX_+)$, with $\tr'^{\rmG}(id): \mbS^{\rmG}_{\k} \ra \mbS^{\rmG}_{\k} \wedge \rmX_+$ 
 the $\rmG$-equivariant pre-transfer in \cite[Definition 8.2(ii)]{CJ23-T1}. Moreover, the composition of the above
 pre-transfer and the projection $\rmY \times (\mbS^{\rmG}_{\k} \wedge \rmX_+) \ra \rmY \times \mbS^{\rmG}_{\k}$ is $id_{\rmY} \times \tau_{\rmX}$,
 {\it where we view $\mbS^{\rmG}_{\k}$ as a spectrum in} $\widetilde \Spt^{\rmG}(\k_{\rm mot})$. 
 In view of the assumption that $\tau_{\rmX}^*(1)$ is a unit in the Grothendieck-Witt ring of $\k$, this composite map is
 a weak-equivalence mapping $\rmY \times \mbS^{\rmG}_{\k} $ to itself, again viewing $\mbS^{\rmG}_{\k}$ as a spectrum in $\widetilde \Spt^{\rmG}(\k_{\rm mot})$. Next 
 consider the composite map
 \[{\widetilde \rmE}\times_{\rmG}^{et}({\rm a}\epsilon^*(\rmY \times \mbS^{\rmG}_{\k})) {\overset {{\widetilde \rmE}\times_{\rmG}^{et}(a\epsilon^*\tr'^{\rmG}(id_{\rmY+}))} \longrightarrow} {\widetilde \rmE}\times_{\rmG}^{et}(a\epsilon^*(\rmY \times (\mbS^{\rmG}_{\k} \wedge \rmX_+))) {\overset {{\widetilde \rmE}\times_{\rmG}^{et}(a\epsilon^*(pr))} \longrightarrow} {\widetilde \rmE}\times_{\rmG}^{et}(a\epsilon^*(\rmY \times \mbS^{\rmG}_{\k}))\]
 which lives over the small \'etale site of ${\widetilde \rmE} \times_{\rmG}^{et} \rmY$.
 Next we smash each term of the above spectra indexed by ${\rmT}_{\rmV}$ with   the Thom-space of
 the complimentary bundle $\epsilon^*(\eta^{\rmV})$ over the base $\epsilon^*(\widetilde \rmE_{\rmY})$. Working locally on the small \'etale site of 
 ${\widetilde \rmE} \times_{\rmG}^{et} \rmY_+$, one can see that the resulting composite map of spectra identifies with ${\widetilde \rmE} \times_{\rmG}^{et}(id_{\rmY} \times \epsilon^*(\tau_{\rmX}))$.
 This is a weak-equivalence since $\tau_{\rmX}:\mbS^{\rmG}_{\k} \ra \mbS^{\rmG}_{\k}$ is a weak-equivalence, when $\mbS^{\rmG}_{\k}$ is viewed as a spectrum 
 in ${\widetilde \Spt}^{\rmG}(\k_{\rm mot})$. Therefore, it remains a weak-equivalence of spectra on applying
 $\rmR\epsilon_*$ and collapsing the section from $\tilde \rmE_{\rmY}$
  as in \cite[Steps 3-5 of 8.3:Construction of the transfer]{CJ23-T1}. In fact, now the corresponding map is
  \be \begin{equation}
  \rmR\epsilon_*(id_{({\widetilde \rmE} \times_{\rmG}^{et} {\rm a}\epsilon^*(\rmY))_+} \wedge \epsilon^*(\tau_{\rmX})): \rmR\epsilon_*(\epsilon^*\mbS_{\k}) \wedge \rmR\epsilon_*(({\widetilde \rmE} \times_{\rmG}^{et} {\rm a}\epsilon^*(\rmY))_+ \ra \rmR\epsilon_*(\epsilon^*\mbS_{\k}) \wedge \rmR\epsilon_*(({\widetilde \rmE} \times_{\rmG}^{et} \rmY)_+ .
  \end{equation} \ee
 %Therefore, we take the derived smash product of the above map with $\mbS_{\k}$ over $\rmR\epsilon_*(\epsilon^*({\mbS}_{\k}))$ to get an induced map: $\mbS_{\k} \wedge \rmR\epsilon_*(({\widetilde \rmE} \times_{\rmG}^{et} {\rm a}\epsilon^*(\rmY))_+  \ra \mbS_{\k} \wedge \rmR\epsilon_*(({\widetilde \rmE} \times_{\rmG}^{et} {\rm a}\epsilon^*(\rmY))_+ $
 %which is a weak-equivalence.
 Finally we apply a generalized motivic cohomology theory $\rmh^{*, \bullet}$ to the above maps to observe that the composite map
 \[\rmh^{*, \bullet}(\rmR\epsilon_*(\epsilon^*{\mbS_{\k}}) \wedge \rmR\epsilon_*({\widetilde \rmE}\times_{\rmG}^{et}({\rm a}\epsilon^*(\rmY)))_+) {\overset {\pi_{\rmY}^*} \longrightarrow} \rmh^{*, \bullet}(\rmR\epsilon_*(\epsilon^*{\mbS_{\k}}) \wedge R\epsilon_*({\widetilde \rmE}\times_{\rmG}^{et}(a\epsilon^*(\rmY \times \rmX))))_+)\]
 \[{\overset {\tr(id_{\rmY}^*)} \longrightarrow} \rmh^{*, \bullet}(\rmR\epsilon_*(\epsilon^*{\mbS_{\k}}) \wedge \rmR\epsilon_*({\widetilde \rmE}\times_{\rmG}^{et}({\rm a}\epsilon^*(\rmY)))_+)  \]
is an isomorphism.
\vskip .2cm
The {\it main advantage} of this method is that it provides splittings for all generalized motivic cohomology theories, whenever
the above computation of the trace $\tau_{\rmX}^*(1)$ can be carried out in the Grothendieck-Witt ring of the base field, 
{ independent of whether the group $\rmG$ is special}. One may see a detailed discussion of splittings obtained this way in Theorems 
 ~\ref{stable.splitting.1} and  ~\ref{split.0} discussed below as well as Corollary ~\ref{transf.appl.1}.

\vskip .2cm

 \begin{theorem} \index{Grothendieck-Witt ring and splittings}
\label{stable.splitting.1}
Let $\pi_{\rmY}: \rmE\times_{\rmG}(\rmY \times \rmX) \ra \rmE\times_{\rmG} \rmY$ denote a map as in one of the three cases considered in Theorem ~\ref{thm.1}. Assume $\rmG$ is a
 split linear algebraic group. 
In case $\rmG$ is not special, we will also assume the field $\k$ is infinite and 
 we will also assume the field $k$ satisfies the hypothesis ~\eqref{etale.finiteness.hyps}. Let $\rmM$ denote a motivic spectrum.
\vskip .1cm
 Then the map induced by $\tr(id_{\rmY})^*$ provides a splitting to the map 
 \[\pi_{\rmY}^*: \rmh^{*, \bullet}(\rmE\times_{\rmG}\rmY, \rmM) \ra  \rmh^{*, \bullet}(\rmE\times_{\rmG}(\rmY \times \rmX), \rmM)\]
in the following cases:
\begin{enumerate}[\rm(i)]
 \item ${\Sigma^{\infty}_{\T}}\rmX_+$ is 
dualizable in the motivic homotopy category $\SH(\k)$, the trace $\tau_{X}^*(1)$ is a unit in the Grothendieck-Witt ring of the base 
field $\k$ and $\rmM$ denotes any motivic spectrum. In particular, this holds if $\rmX$ and $\rmY$ are smooth schemes of finite
type over $\k$ and $char (k) =0$, provided $\tau_{\rmX}^*(1)$ is a unit in the Grothendieck-Witt ring of $\k$.

\item $Char (k)=p>0$, $\cE$ denotes any one of the ring spectra, (a) $\mbS_{\k}[p^{-1}]$, (b) $ \mbS_{\k,(\ell)}$ or (c) $\cE= {\widehat \mbS}_{\k, \ell}$ for some prime $\ell \ne p$ and 
$\cE \wedge X_+$
is dualizable in $\SH(\k, \cE)$, $\rmM \in \Spt(\k_{\rm mot}, \cE)$. We will further assume 
 that 
the corresponding trace $\tau_{\rmX}: \cE \ra \cE$ is a unit in the corresponding variant of the Grothendieck-Witt ring, that is,
$[\cE, \cE]$, which denotes stable homotopy classes of maps from $\cE$ to $\cE$. In particular, this holds if $\rmX$ and $\rmY$ are smooth schemes of finite
type over $\k$, provided $\tau_{\rmX}^*(1)$ is a unit in the above variant of Grothendieck-Witt ring of $\k$.
\end{enumerate}
\end{theorem}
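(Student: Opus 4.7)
The plan is to apply the multiplicative property of the transfer (Corollary~\ref{multiplicative.prop.1}) to reduce the splitting claim to showing that $\tr(id_{\rmY})^*(1)$ is a unit in $\rmh^{0,0}(\rmE\times_{\rmG}\rmY,\rmM)$. In fact, I would aim higher and try to establish that $\pi_{\rmY}\circ\tr(id_{\rmY})$ is already a weak equivalence at the level of motivic spectra, from which the required isomorphism in any generalized cohomology follows at once. This sidesteps the fact that the hypothesis \eqref{rat.point} appearing in Proposition~\ref{trace.2} is verified in Proposition~\ref{no.need.special} only for motivic cohomology with constant coefficients, and is not assumed for the general motivic $\rmM$ in the statement.

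When $\rmG$ is split special I would first exploit the Zariski local triviality of the torsor $\rmp:\rmE\ra\rmB$. Fix a Zariski cover $\{\rmU_i\}$ of $\widetilde{\rmB}$ over which $\tilde{\rmp}$ trivializes. Over each $\rmU_i$ the projection $\pi_{\rmY}$ identifies with the projection $\rmU_i\times\rmY\times\rmX\ra\rmU_i\times\rmY$ and, by the very construction of the transfer in \cite[\S8.4]{CJ23-T1}, the transfer restricts to $id_{\rmU_i\times\rmY}\wedge\tr'_{\rmX}$, where $\tr'_{\rmX}$ is the non-equivariant pre-transfer. Hence the composite $\pi_{\rmY}\circ\tr(id_{\rmY})$ becomes $id_{\rmU_i\times\rmY}\wedge\tau_{\rmX}$ over $\rmU_i$. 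The hypothesis that $\tau_{\rmX}^*(1)$ is a unit in $\GW(\k)=[\mbS_{\k},\mbS_{\k}]$ means $\tau_{\rmX}$ is invertible in $\SH(\k)$, so the composite is a weak equivalence locally. A Zariski Mayer--Vietoris argument, using the quasi-compactness of $\widetilde{\rmB}$, globalizes this to a weak equivalence over $\widetilde{\rmB}$, and the passage to the colimit over $\BG^{\it gm,m}$ is handled by the transfer compatibility of Theorem~\ref{thm.1}(i) together with the standard $\lim^1$-exact-sequence argument.

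For $\rmG$ not necessarily special I would follow the route laid out in Case~2 of \S\ref{split.Groth.Witt.grp}: begin with the $\rmG$-equivariant pre-transfer $\tr'^{\rmG}(id):\mbS_{\k}^{\rmG}\ra\mbS_{\k}^{\rmG}\wedge\rmX_+$ from \cite[Definition~8.2(iii)]{CJ23-T1}, whose composition with the evident projection is the equivariant trace $\tau_{\rmX}$. Viewed in $\widetilde{\Spt}^{\rmG}(\k_{\rm mot})$ this composite is a weak equivalence, since its underlying non-equivariant class is a unit in $\GW(\k)$. I would then, following Steps~3--5 of \cite[\S8.3]{CJ23-T1}, smash with $id_{\rmY}$, pull back through $\epsilon^*$, form the Borel construction $\widetilde{\rmE}\times_{\rmG}^{et}(-)$, smash with the Thom spaces of the complementary bundles $\epsilon^*(\eta^{\rmV})$, collapse the chosen sections, and finally apply $\rmR\epsilon_*$. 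Working locally on the small \'etale site of $\widetilde{\rmE}\times_{\rmG}^{et}\rmY$, the resulting composite identifies with $\widetilde{\rmE}\times_{\rmG}^{et}(id_{\rmY}\times\epsilon^*(\tau_{\rmX}))$, so the equivalence survives through each operation and yields the desired weak equivalence globally; applying $\rmh^{*,\bullet}(-,\rmM)$ then gives the splitting.

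Part (ii) is handled by exactly the same template after replacing $\mbS_{\k}$ by $\cE$ and $\GW(\k)$ by $[\cE,\cE]$: the hypothesis that $\tau_{\rmX}^*(1)$ is a unit in $[\cE,\cE]$ makes the corresponding $\cE$-linear trace invertible in $\SH(\k,\cE)$, while the dualizability of $\cE\wedge\rmX_+$ is precisely what permits carrying out the pre-transfer construction in $\Spt(\k_{\rm mot},\cE)$; in positive characteristic the choice $\cE\in\{\mbS_{\k}[p^{-1}],\mbS_{\k,(\ell)},\widehat{\mbS}_{\k,\ell}\}$ ensures the sphere-spectrum arguments behave correctly. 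I expect the main technical obstacle to lie in the non-special case: verifying that every one of the operations in Steps~3--5 of \cite[\S8.3]{CJ23-T1} genuinely preserves the weak equivalence $\tau_{\rmX}$, and in particular that the final $\rmR\epsilon_*$ keeps the composite a weak equivalence globally rather than merely locally. This ultimately relies on the finiteness hypotheses \eqref{etale.finiteness.hyps} and the fact that all of the intermediate functors are left Quillen on the relevant model structures, with the derived pushforward applied only at the last stage to a spectrum-level equivalence.
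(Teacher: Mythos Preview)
Your proposal is correct and follows essentially the same approach as the paper. The paper's proof simply refers back to the discussion in \S\ref{split.Groth.Witt.grp}, which you have faithfully reconstructed: for special $\rmG$ the Zariski local-triviality plus Mayer--Vietoris argument (Case~1), for non-special $\rmG$ the equivariant pre-transfer route through $\widetilde{\Spt}^{\rmG}$ followed by the Borel construction and $\rmR\epsilon_*$ (Case~2), and for part~(ii) the direct substitution of $\cE$ for $\mbS_{\k}$. Your observation that aiming for a spectrum-level weak equivalence of $\pi_{\rmY}\circ\tr(id_{\rmY})$ circumvents the need for the restriction hypothesis~\eqref{rat.point} is apt and matches the paper's strategy here, which indeed does not invoke Proposition~\ref{trace.2} for this theorem.
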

 \vskip .1cm
 The main example, where one is able to compute the motivic trace in the Grothendieck-Witt group is for $\rmG/\N(T)$, where $\rmG$ is a split linear algebraic group and $\N(T)$ denotes the normalizer of 
a split maximal torus in $\rmG$: see \cite{Lev18} for partial results in
this direction and see \cite[Theorem 1.2 and Corollary 1.3]{JP23}  for a computation in the general case. This yields the following result.
\vskip .1cm
 \begin{theorem} (See \cite[Theorem 1.2 and Corollary 1.3]{JP23} and also \cite[Proposition 2.2]{JP22}.) \index{Grothendieck-Witt ring and splittings}
	\label{split.0}
	Let $\rmG$ denote a {\it split}  linear algebraic group over the given base field $k$: we will assume $k$ is infinite when $\rmG$ is not special. Let $\rmh^{*, \bullet}$ denote a generalized motivic cohomology theory defined
	with respect to a motivic spectrum (with $p$ inverted, if $char({\it k}) =p>0$.)  Then,  
	with $\N(T)$ denoting the normalizer of a split maximal torus in $\rmG$, 
	and with $\rmp: {\Sigma^{\infty}_{\T}}\BN(T)_+ \ra {\Sigma^{\infty}_{\T}}\BG_+$ denoting the map induced by the inclusion $\N(T) \ra {\rm G}$, the induced map
	\[\rmp^*:{\rmh^{*, \bullet}(\BG_+)} \ra {\rmh^{*, \bullet}(\BN(T)_+)}\]
	is {\it split injective}. In particular, when the group $\rmG$ is special, the above splitting holds for Algebraic K-theory (integral in characteristic $0$ and with finite coefficients prime to
	the characteristic, in general.)
\end{theorem}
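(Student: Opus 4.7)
The plan is to deduce this from Theorem~\ref{stable.splitting.1} by taking $\rmX = \rmG/\N(T)$, $\rmf = \mathrm{id}_{\rmX}$, $\rmY = \Spec\,\k$ with trivial $\rmG$-action, and letting $\rmp: \rmE \ra \rmB$ be the universal $\rmG$-torsor, realized through the finite-dimensional approximations $\EG^{\it gm, m} \ra \BG^{\it gm, m}$ of context (b) in Theorem~\ref{thm.1}. The passage to the full classifying space is then handled by Theorem~\ref{thm.1}(i) together with the standard $\mathrm{lim}^1$ short exact sequence for generalized motivic cohomology.

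The first step is the identification of the projection $\pi_{\rmY}$ with the desired map. With the above choices,
\[
\rmE\times_{\rmG}(\rmY \times \rmX) \;=\; \rmE\times_{\rmG}(\rmG/\N(T)) \;=\; \rmE/\N(T),
\]
and as $\rmE$ varies through a choice of geometric classifying spaces this produces the standard finite-dimensional model for $\BN(T)$, which by Theorem~\ref{thm.1}(v) is independent of the chosen model. Under this identification $\pi_{\rmY}$ coincides (up to canonical weak equivalence) with the map $\BN(T) \ra \BG$ induced by the inclusion $\N(T) \hookrightarrow \rmG$, so the assertion reduces to showing that $\pi_{\rmY}^*$ is split injective on $\rmh^{*,\bullet}$.

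Next, one must verify the hypotheses of Theorem~\ref{stable.splitting.1}. Dualizability of $\Sigma^{\infty}_{\T}(\rmG/\N(T))_+$ (respectively of $\cE \wedge (\rmG/\N(T))_+$ in positive characteristic, with $\cE$ one of the ring spectra listed in Theorem~\ref{stable.splitting.1}(ii)) follows because $\rmG/\N(T)$ is a smooth projective $\k$-variety (the variety of maximal tori of $\rmG$); smooth projective varieties are strongly dualizable in $\SH(\k)$ by the standard Spanier--Whitehead duality for smooth projective schemes. The infiniteness assumption on $\k$ when $\rmG$ is not special is built into the hypotheses of Theorem~\ref{stable.splitting.1}, as is the finite cohomological dimension hypothesis~\eqref{etale.finiteness.hyps} needed in the non-special case.

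The remaining, and only nontrivial, ingredient is the computation of the motivic trace $\tau_{\rmG/\N(T)}^*(1)$ in the Grothendieck-Witt ring $\mathrm{GW}(\k)$ (or its variant with the characteristic exponent inverted). This is precisely what is carried out in \cite[Theorem 1.2 and Corollary 1.3]{JP23}, where it is shown that this trace equals $1$ (hence is a unit) in $\mathrm{GW}(\k)$, with the appropriate localization in positive characteristic; an earlier special case is \cite[Proposition 2.2]{JP22}. This is the main obstacle of the argument, but it is completely handled by the cited results. Once this is in hand, Theorem~\ref{stable.splitting.1}(i) in characteristic zero and Theorem~\ref{stable.splitting.1}(ii) in positive characteristic yield the splitting of $\pi_{\rmY}^*$ by the transfer $\tr(\mathrm{id}_{\rmY})^*$. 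Finally, for the Algebraic K-theory statement, one takes $\rmM$ to be the motivic spectrum $\rmK\rmG\rmL$ (integrally in characteristic $0$, or with coefficients in ${\mathbb Z}/\ell^\nu$ for $\ell \ne \mathrm{char}(\k)$ in general); both are modules over the relevant localized/complete sphere spectrum and therefore fit within the framework of Theorem~\ref{stable.splitting.1}, giving the stated splitting on $\rmK$-theory.
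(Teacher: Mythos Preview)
Your approach is the same as the paper's: specialize Theorem~\ref{stable.splitting.1} to $\rmX=\rmG/\N(T)$, $\rmY=\Spec\,\k$, the universal torsor in context~(b), and invoke the trace computation of \cite{JP23}. The identification $\rmE\times_{\rmG}(\rmG/\N(T))\cong \rmE/\N(T)$ and the passage to $\BN(T)$ via Theorem~\ref{thm.1}(i),(v) are handled correctly.

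There is, however, one factual slip in your dualizability step: $\rmG/\N(T)$ is \emph{not} projective in general. Already for $\rmG=\mathrm{SL}_2$ the variety of maximal tori is $\mathrm{Sym}^2(\mathbb{P}^1)\setminus\Delta\cong\mathbb{P}^2\setminus(\text{a smooth conic})$, which is affine. So the appeal to Spanier--Whitehead duality for smooth projective schemes is misplaced. This does not break the argument, because dualizability of $\Sigma^{\infty}_{\T}(\rmG/\N(T))_+$ in $\SH(\k)$ (respectively of $\cE\wedge(\rmG/\N(T))_+$ in $\SH(\k,\cE)$ for the ring spectra $\cE$ listed in Theorem~\ref{stable.splitting.1}(ii)) holds for the more general reason that $\rmG/\N(T)$ is a smooth quasi-projective $\k$-scheme; this is exactly what the ``in particular'' clauses of Theorem~\ref{stable.splitting.1}(i) and~(ii) supply (ultimately via \cite[Theorem~7.1]{CJ23-T1}). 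You should replace the projectivity claim with this justification.
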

\vskip .1cm
\begin{corollary}
 	\label{transf.appl.1}
 	Assume in addition to the the hypotheses of Theorem ~\ref{split.0}, that the following hypotheses hold for (i) through (iii).  Let $\rmM$ denote any motivic spectrum if the base field is of characteristic $0$ and let $\rmM$ denote
 	a motivic spectrum in $\Spt(\k_{\rm mot}, \mbS[{\it p}^{\rm -1}])$ if the base field is of characteristic ${\it p} >0$. Let
 	$\rmp: \rmE \ra \rmB$ denote the map appearing in one of the three cases (a) through (c) considered in Theorem ~\ref{thm.1}. 
 	\begin{enumerate}[\rm(i)]
     \item
 	 Let $\pi: \rmE{\underset {\group} \times}(\rmG/\NT) \ra \rmB$ denote the map induced
        by the projection $\rmG/\rmN(\rmT) \ra Spec \, \k$. Then the corresponding induced map
 	 \[\pi^*:   \rmh^{\bullet, *}(\rmB, M) \ra   \rmh^{\bullet, *}(\rmE{\underset {\rmG} \times}(\rmG/\NT), M)\]
 	 is a split monomorphism, where $\rmh^{*, \bullet}(\quad, M)$ denotes the generalized motivic cohomology theory defined with respect to the spectrum  $\rmM$.
     \item  Let $\rmY$ denote a $\rmG$-scheme or an unpointed simplicial presheaf provided with a $\rmG$-action. Let  
 	 $\rmq: \rmE{\underset {\rmG} \times}({\rmG}{\underset {\NT} \times}\rmY ) \ra \rmE{\underset {\rmG} \times} \rmY$ denote the map induced by the map ${\rmG}{\underset {\NT} \times}\rmY \ra \rmY$
 	  sending $(g, y) \mapsto gy$. Then, the induced map
 	 \[\rmq^*: \rmh^{\bullet, *}(\rmE{\underset {\rmG} \times} \rmY, M) \ra   \rmh^{\bullet, *}(\rmE{\underset {\rmG} \times}({\rmG}{\underset {\NT} \times}\rmY ), M)\]
 	 is also a split injection.
     \item Assume the base field $\k$ satisfies the finiteness hypotheses in ~\eqref{etale.finiteness.hyps}.
     Let $\cE$ denote a commutative ring spectrum in ${\Spt}(\k_{\rm mot})$, whose presheaves of homotopy groups are  all $\ell$-primary torsion for a
 fixed prime $\ell \ne char (k)$, and
let $\epsilon^*(\cE)$ denote the corresponding spectrum in $\Spt(\k_{et})$. 
We will further assume that $\rmM$ is a module spectrum over $\cE$. 
 Then the results corresponding to (i) through (iii) hold for $\rmh^*( \quad, \epsilon^*(\rmM))$ which is the generalized \'etale cohomology with respect to the \'etale spectrum $\epsilon^*(\rmM)$. 
       \end{enumerate}
 \end{corollary}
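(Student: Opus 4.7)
The plan is to reduce all three parts to applications of Theorem \ref{stable.splitting.1}, by identifying the projections $\pi$ and $\rmq$ with instances of the map $\pi_{\rmY}$ for a judicious choice of $\rmX$, and then invoking the computation of the trace $\tau_{\rmG/\NT}^*(1)$ in the Grothendieck-Witt ring carried out in \cite[Theorem 1.2 and Corollary 1.3]{JP23} (recalled in Theorem \ref{split.0} above).

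For part (i), I would take $\rmX = \rmG/\NT$ with its natural left $\rmG$-action and $\rmY = Spec \, \k$ with the trivial $\rmG$-action, together with $\rmf = id_{\rmX}$. Then the induced map $\pi_{\rmY}: \rmE \times_{\rmG}(\rmG/\NT) \ra \rmE \times_{\rmG} Spec \, \k = \rmB$ coincides with the map $\pi$ in the statement. Since $\tau_{\rmG/\NT}^*(1)$ is a unit in $\GW(\k)$ (or its variant with the characteristic exponent of $\k$ inverted, in positive characteristic), the hypothesis of Theorem \ref{stable.splitting.1} is satisfied, so that $\tr(id_{\rmY})^*$ splits $\pi^*$ in $\rmh^{*,\bullet}(\quad,\rmM)$ for any motivic spectrum $\rmM$ (or module spectrum over $\mbS_{\k}[p^{-1}]$ in characteristic $p$).

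For part (ii), I would again set $\rmX = \rmG/\NT$, keeping $\rmY$ as the given $\rmG$-space. The key point is the standard $\rmG$-equivariant isomorphism $\rmG/\NT \times \rmY \cong \rmG \times_{\NT} \rmY$, sending $(g\NT, y) \mapsto [g, g^{-1}y]$, where the source carries the diagonal $\rmG$-action and the target carries the $\rmG$-action by left translation on the first factor. Forming the Borel construction $\rmE \times_{\rmG}(-)$ transports this isomorphism into an identification of $\pi_{\rmY}: \rmE \times_{\rmG}(\rmY \times \rmG/\NT) \ra \rmE \times_{\rmG} \rmY$ with the map $\rmq: \rmE \times_{\rmG}(\rmG \times_{\NT} \rmY) \ra \rmE \times_{\rmG} \rmY$ of the statement. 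Theorem \ref{stable.splitting.1} then produces the splitting, with the Grothendieck-Witt hypothesis satisfied for the same reason as above. For part (iii), I would invoke the compatibility of the pre-transfer with \'etale realization established in Proposition \ref{transf.real} and Corollary \ref{transf.compat}: applying $\epsilon^*$ to the splitting diagrams of parts (i) and (ii) yields corresponding diagrams in $\SH(\k_{et}, \epsilon^*(\cE))$, and because $\epsilon^*(\rmM)$ is a module spectrum over $\epsilon^*(\cE)$, the multiplicative property in Corollary \ref{multiplicative.prop.1}(ii) delivers the desired splitting after \'etale realization.

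The main obstacle I anticipate is tracking the dualizability hypotheses through the successive functorial passages: one needs $\cE \wedge (\rmG/\NT)_+$ to be dualizable in $\SH(\k, \cE)$, and, for part (iii), its image under $\epsilon^*$ to remain dualizable in $\SH(\k_{et}, \epsilon^*(\cE))$, so that the trace computation of $\tau_{\rmG/\NT}^*(1)$ in $\GW(\k)$ survives as a unit through the ring homomorphism $\GW(\k) \ra [\cE, \cE]$ and then through the further map to $[\epsilon^*(\cE), \epsilon^*(\cE)]$. Smoothness and properness of $\rmG/\NT$ together with \cite[Proposition 7.3]{CJ23-T1} should handle the dualizability, but some care is required in the non-special case to ensure that all constructions descend properly to the Nisnevich site via $\rmR\epsilon_*$.
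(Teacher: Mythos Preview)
Your proposal is correct and follows essentially the same route as the paper: the paper's own proof is extremely terse, merely stating that ``in view of the results on motivic Euler characteristic of $\rmG/\NT$ proved in \cite[Theorem 1.2 and Corollary 1.3]{JP23}, Theorem~\ref{split.0} and Corollary~\ref{transf.appl.1} follow from the above discussion,'' where ``the above discussion'' is the argument in \S\ref{split.Groth.Witt.grp} underlying Theorem~\ref{stable.splitting.1}. Your unpacking of this---taking $\rmX=\rmG/\NT$ throughout, reducing (ii) to (i) via the $\rmG$-equivariant identification $\rmG/\NT \times \rmY \cong \rmG\times_{\NT}\rmY$, and handling (iii) through the compatibility of the transfer with \'etale realization---is exactly what the paper intends but does not spell out; your explicit verification that the second projection on $\rmG/\NT \times \rmY$ corresponds under that isomorphism to the action map $[g,y']\mapsto gy'$ is a welcome addition.
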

\vskip .2cm \noindent
{\bf Proofs of Theorems ~\ref{stable.splitting.1}, ~\ref{split.0} and Corollary ~\ref{transf.appl.1}}.
Clearly the statements in Theorem ~\ref{stable.splitting.1}(i) 
and Theorem ~\ref{split.0} follow readily 
in view of the above discussion in \ref{split.Groth.Witt.grp}. The proof of the statement in Theorem ~\ref{stable.splitting.1}(ii) is entirely similar with 
the motivic sphere spectrum $\mbS_{\k}$ replaced by the given motivic ring spectrum $\cE$.\qed
\vskip .1cm
In view of the results on 
motivic Euler characteristic of $\rmG/\NT$ proved in \cite[Theorem 1.2 and Corollary 1.3]{JP23}, 
Theorem  ~\ref{split.0} and Corollary ~\ref{transf.appl.1}
 follow from the above discussion. \qed
 \vskip .2cm \noindent
\begin{remark}
Special cases of the above theorem and Corollary, such as for Algebraic K-theory are particularly interesting.
Theorem ~\ref{split.0}, for Algebraic K-Theory in fact enables one to restrict the structure group from ${\group}$ to $\N(T)$
(and then to $\rm\rmT$ by ad-hoc arguments) in several situations. Taking ${\group}=\GL_n$, this becomes a
{\it splitting principle} reducing 
problems on vector bundles to corresponding problems on line bundles. 
 \end{remark}
\vskip .1cm
The main {\it disadvantages} of this method are as follows.
Computing the trace associated to a scheme in the Grothendieck-Witt ring  is
extremely difficult for many schemes, and possibly not do-able with the present technology. As pointed out above, the only case where this seems do-able at 
present is for $\rmG/\N(T)$, where $\rmG$ is a split linear algebraic group and $\N(T)$ denotes the normalizer of 
a split maximal torus in $\rmG$. Moreover, the above discussion is only for the
 case the self-map $\rmf: \rmX \ra \rmX$, (with $\rmX=\rmG/\N(T)$) is the identity: it is far from clear how to carry out a similar computation
 in the Grothendieck-Witt ring for a general self-map $\rmf$, even for the same $\rmX$.
\vskip .3cm \noindent
\subsection{Splittings for slice-completed generalized motivic cohomology theories}
 First we define slice-completed  generalized motivic cohomology theories.
 \begin{definition}
 \label{completed.gen.coh} \index{slice completed cohomology theory}
  For a smooth scheme $\rmY$ (smooth ind-scheme $\Y=\{\rmY_m|m\}$), we define {\it the slice completed
 generalized motivic cohomology spectrum}  with respect to a motivic spectrum $\rmM$ to be 
 \[\hat \rmh(\rmY, \rmM) = \holimn \H_{Nis}(\rmY, s_{\le n}\rmM) \simeq \H_{Nis}(\rmY, \holimn s_{\le n}\rmM)\]
 \[(\hat \rmh(\Y, \rmM) = \holimm \holimn \H_{Nis}(Y_m, s_{\le n} \rmM) \simeq \holimm \H_{Nis}(\rmY_m, \holimn s_{\le n}\rmM)),\]
 where $s_{\le n} \rmM $ is the homotopy cofiber of the map 
$f_{n+1}\rmM \ra \rmM$ and $\{f_n \rmM|n\}$ is the {\it slice tower} for $\rmM$ with $f_{n+1}\rmM$ being the $n+1$-th connective cover of $\rmM$. ($ {\mathbb H}_{\rm Nis}(\rmY, \rmF)$ and $ {\mathbb H}_{\rm Nis}(\rmY_m, \rmF)$ denote the generalized hypercohomology spectrum
with respect to a motivic spectrum $\rmF$ computed on the Nisnevich site.) The corresponding homotopy groups for maps from ${\Sigma^{\infty}_{\T}}(\rmS^{\it u} \wedge {\mathbb G}_{\it m}^{\it v})$ to the above spectra will be denoted $\hat \rmh^{u+v, v}(\Y, \rmM)$.
One may define the completed generalized \'etale cohomology spectrum of a scheme with respect to an $\rmS^1$-spectrum
 by using the Postnikov tower in the place of the slice tower in a similar manner.
 \end{definition}
 \begin{remark}
 \label{slice.compl.sp} By \cite[18.1.8]{Hirsch}, the homotopy inverse limit $\holimn s_{\le n}\rmM$ belongs to $\Spt(\k)$. We may, therefore,
  define the {\it slice completion} of the spectrum $\rmM$ to be $\holim s_{\le n}\rmM$ (denoted henceforth by $\hat \rmM$) and define $\rmM$ to be slice-complete, if
  the natural map $\rmM \ra \hat \rmM$ is a weak-equivalence.  Therefore,
  one may see that $\hat {\it h}(\rmY, \rmM) = {\it h}(\rmY, \hat \rmM)$ and $\hat {\it h}(\Y, \rmM) = {\it h}(\Y, \hat \rmM)$. Several important spectra, like the
  spectrum representing algebraic K-theory and algebraic cobordism, are known to be slice-complete.
\end{remark}
 \vskip .2cm
 This method makes strong use of 
 the fact that the transfer map we have constructed is a map in the appropriate stable homotopy category and, therefore
 induces a map of the corresponding motivic (or \'etale)  Atiyah-Hirzebruch spectral sequences: therefore, it suffices to show the transfer
 induces a splitting at the $\rmE_2$-terms of this spectral sequence. The spectral sequence can be shown to
 convergence strongly once we replace a given spectrum $\rmM$ with one of its slices, as shown in the discussion in 
 the proof of Theorem ~\ref{stable.splitting.2}. %    section ~\ref{slice.compl.splittings}.
 \vskip .1cm
The multiplicative properties of the slice filtration
that a natural pairing of the slices of a motivic spectrum lift to the category $\Spt(\k_{\rm mot})$ from
the corresponding motivic stable homotopy category
 were verified in \cite{Pel}. Therefore, it follows that the  
motivic spectra that define the
$\rmE_2$-terms of the motivic Atiyah-Hirzebruch spectral sequence, that is the slices of the given motivic spectrum, are modules over the motivic 
Eilenberg-Maclane spectrum $\H({\mathbb Z})$. Then the multiplicative property of the transfer as in Proposition ~\ref{multiplicative.prop.0} and
 Corollary ~\ref{multiplicative.prop.1} reduce to checking that we obtain a splitting for motivic cohomology.
 \vskip .2cm
 Next, we make use of the
base-change property of the transfer as in Proposition ~\ref{base.change} and then reduce to checking this for the action of trivial groups, that is for the pre-transfer.
See, for example,  Proposition ~\ref{trace.2}. 
\vskip .1cm
At this
point it is often very convenient, as well as necessary, { to know that the transfer is compatible with passage to simpler situations, for example, to a change of the base field to
one that is separably or algebraically closed and with suitable realizations, that is either the \'etale realization or the Betti realization}. 
A {\it main advantage} of this approach is that it would be only necessary to compute $\tr(\rmf)^*(1)$ and the trace $\tau_{\rmX}^*(1)$
after such reductions and realizations, which are readily do-able for a large number of schemes $\rmX$: see Proposition ~\ref{transf.real} 
 and Corollary ~\ref{transf.compat}.
{\it Another advantage} is that it addresses affirmatively the important question if the pre-transfer and transfer are compatible
with such reductions and realizations. Moreover, by this method, one can allow any self-map $\rmf:\rmX \ra \rmX$ and compute
 the corresponding trace $\tau_{\rmX}(\rmf)$. One can consult Theorem ~\ref{stable.splitting.2} and Corollary  ~\ref{Kth.splitting} discussed below, for examples of splittings obtained this way. 
 Though a variant of the following theorem holds for groups that are non-special, with certain modifications,  we will restrict to groups 
 that are special, mainly to keep the discussion simpler.
\vskip .2cm

 \begin{theorem}
\label{stable.splitting.2} \index{slice completed cohomology theory and splittings}
Assume in addition to the hypotheses of Theorem ~\ref{stable.splitting.1} that the following hold. Assume the group $\rmG$ is special and 
 let $\rmf:\rmX \ra \rmX$ denote a $\group$-equivariant map and let $\rmf_Y=id_{Y_+} \wedge \rmf: \rmY_+ \wedge \rmX_+ \ra \rmY_+ \wedge \rmX_+$. The map induced by $\tr(\rmf_{\rmY})^*$ provides a splitting to the map $\pi_{\rmY}^*: \hat \rmh^{*, \bullet}(\rmE\times_{\rmG}\rmY, \rmM) \ra \hat \rmh^{*, \bullet}(\rmE\times_{\rmG}(\rmY \times \rmX), \rmM)$
in the following cases:
\vskip .1cm
\begin{enumerate}[\rm(i)]
 \item ${\Sigma^{\infty}_{\T}}\rmX_+$ is 
dualizable in $\SH(\k)$  and $\tr(\rmf_{\rmY})^*(1)$ is 
a unit in $\rmH^{0, 0}(\rmE\times_{\rmG} \rmY, {\mathbb Z}) \cong \CH^0(\rmE\times_{\rmG} \rmY)$. In particular, this holds if $\rmX$ and $\rmY$ are smooth schemes of finite
type over $\k$ and $char (k) =0$, provided $\tr(\rmf_{\rmY})^*(1)$ is 
a unit in $\rmH^{0, 0}(\rmE\times_{\rmG} \rmY, {\mathbb Z})$.
\vskip .2cm
\item A corresponding result
also holds for the following alternate scenario:
\vskip .2cm
(a) The field $k$ is of positive characteristic $p$, $\mbS_{\k}[{\rm p}^{\rm -1}] \wedge \rmX_+$ is dualizable in $\SH(\k, \mbS_{\k}[{\rm p}^{\rm -1}])$, $\rmM \in \Spt(\k_{\rm mot}, \mbS_{\k}[{\rm p}^{\rm -1}])$ and
$\tr(\rmf_{\rmY})^*(1)$ is a unit in $\rmH^{0, 0}(\rmE\times_{\rmG} \rmY, {\mathbb Z}[{\it p}^{-1}]) \cong \CH^0(\rmE\times_{\rmG} \rmY, Z[{\rm p}^{\rm -1}])$.
\vskip .2cm
(b) The field $k$ is of positive characteristic $p$, $\cE= \mbS_{\k,(\ell)}$ (or $\cE= {\widehat \mbS}_{\k, \ell}$) for some prime $\ell \ne p$ and $\cE \wedge X_+$
is dualizable in $\SH(\k, \cE)$, $\rmM \in \Spt(\k_{\rm mot}, \cE)$ and $\tr(\rmf_{\rmY})^*(1)$ is a unit in $\rmH^{0, 0}(\rmE\times_{\rmG} \rmY, {\mathbb Z}_{(\ell)}) \cong \CH^0(\rmE\times_{\rmG} \rmY, Z_{(\ell)})$
 ($\H^{0,0}(\rmE\times_{\rmG} \rmY, {\mathbb Z}\compl_{\ell}) \cong \CH^0(\rmE\times_{\rmG} \rmY, Z\compl_{\ell})$, \res).
 (Here $\mbS_{\k,(\ell)}$ (${\widehat \mbS}_{\k, \ell}$) denotes the localization of the motivic spectrum ${\mbS_{\k}}$ at the prime ideal $(\ell)$
(the completion at $\ell$, \res).) 
\item
Assume the following: (a) both $\rmX$ and $\rmY$ are smooth 
schemes of finite type over $\k$ provided with $\rmG$-actions, and (b)  both $\rmY$ and $\rmE\times_{\rmG}\rmY$ are connected.
Let $\rmH^{*, \bullet}(\quad, \rmR)$ denote motivic cohomology with coefficients in the commutative Noetherian ring $\rmR$ with a unit.
Then under the isomorphism 
\[\rmH^{0,0}(\rmE\times_{\rmG} \rmY, \rmR) {\overset {\cong } \ra} \rmH^{0,0}( \rmY, \rmR)\]
$\tr(\rmf_{\rmY})^*(1)$ identifies with $(id_{\rmY} \times \tau_{\rmX}(\rmf))^*(1)$. Therefore, the former is a unit if and  only if
the latter is. Under the assumption that $\tr(\rmf_{\rmY})^*(1)$ is a unit, then $\tr(\rmf_{\rmY})^*$ provides a splitting 
to the map $\pi_{\rmY}^*: \hat \rmh^{*, \bullet}(\rmE\times_{\rmG}\rmY, \rmM) \ra \hat \rmh^{*, \bullet}(\rmE\times_{\rmG}(\rmY \times \rmX), \rmM)$,
provided the slices of the motivic spectrum $\rmM$ are modules over the motivic Eilenberg-Maclane spectrum $\rmH(\rmR)$.
\end{enumerate}
\end{theorem}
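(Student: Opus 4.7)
The strategy is to leverage that the transfer $\tr(\rmf_{\rmY})$ is a genuine morphism in the motivic stable homotopy category to propagate the hypothesis on $\tr(\rmf_{\rmY})^*(1)$ through the slice tower. First I would fix the slice tower $\{s_{\le n}\rmM\}$ for $\rmM$ and invoke the theorem of P\'elaez which upgrades each slice $s_n\rmM$ to a weak module spectrum over the motivic Eilenberg--Maclane spectrum $\rmH(\mathbb Z)$ in $\Spt(\k_{\rm mot})$ (the same reference the authors cite just before the theorem). The truncations $s_{\le n}\rmM$ then inherit a weak $\rmH(\mathbb Z)$-module structure by induction along the cofiber sequences $s_n\rmM \to s_{\le n}\rmM \to s_{\le n-1}\rmM$, so Corollary~\ref{multiplicative.prop.1} applies with $\rmA = \rmH(\mathbb Z)$ and $\rmM$ replaced by each truncation $s_{\le n}\rmM$.

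Under the hypothesis that $u := \tr(\rmf_{\rmY})^*(1)$ is a unit in $\rmH^{0,0}(\rmE\times_{\rmG}\rmY, \mathbb Z)$, Corollary~\ref{multiplicative.prop.1} yields, at each truncation level $n$, a splitting
\[
\sigma_n \colon \rmh^{*,\bullet}(\rmE\times_{\rmG}(\rmY\times\rmX), s_{\le n}\rmM) \longrightarrow \rmh^{*,\bullet}(\rmE\times_{\rmG}\rmY, s_{\le n}\rmM), \qquad \sigma_n(\beta) = u^{-1}\cdot\tr(\rmf_{\rmY})^*(\beta),
\]
of $\pi_{\rmY}^*$. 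Multiplication by $u^{-1}$ is natural in the coefficient spectrum and commutes with the connecting maps of the cofiber sequences above, while $\tr(\rmf_{\rmY})^*$ and $\pi_{\rmY}^*$ are likewise natural in $\rmM$. Hence the $\sigma_n$ assemble into a morphism of towers, and applying $\holimn$, together with Remark~\ref{slice.compl.sp}, yields a splitting of $\pi_{\rmY}^*$ on the slice-completed cohomology. This proves (i).

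For (ii), the same argument applies verbatim with $\rmH(\mathbb Z)$ replaced by $\rmH(\mathbb Z[p^{-1}])$, $\rmH(\mathbb Z_{(\ell)})$, or $\rmH(\mathbb Z\compl_\ell)$, each being the motivic Eilenberg--Maclane spectrum associated to the appropriate variant of the sphere in (a) and (b), and with the corresponding identification of $\rmH^{0,0}$ with $\CH^0$. For (iii), under the connectedness hypothesis Proposition~\ref{no.need.special} gives an isomorphism $\rmH^{0,0}(\rmE\times_{\rmG}\rmY, \rmR) \cong \rmH^{0,0}(\rmY, \rmR)$, and Proposition~\ref{trace.2} (combined with Proposition~\ref{base.change} applied to restriction along a $\k$-rational point of $\rmB$) identifies $\tr(\rmf_{\rmY})^*(1)$ with $(id_{\rmY} \wedge \tau_{\rmX}(\rmf))^*(1)$ under the above isomorphism. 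Granted that this class is a unit, the argument of (i) applied with $\rmA = \rmH(\rmR)$, using the hypothesis that the slices of $\rmM$ are $\rmH(\rmR)$-modules, concludes the proof.

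The main obstacle will be verifying that the module structures at the successive truncations $s_{\le n}\rmM$ are coherent enough that the splittings $\sigma_n$ genuinely assemble into a compatible tower: P\'elaez's theorem produces the module structure on each slice in $\Spt(\k_{\rm mot})$, but one must check that the induced structure on the truncations is compatible with the tower maps, at least up to homotopy, so that the naturality of $\tr(\rmf_{\rmY})^*$ and of multiplication by $u^{-1}$ suffices. Once this point is settled, the $\holimn$ step is routine, since the $\lim^1$ terms vanish under the finiteness hypotheses on the base field imposed via~\eqref{etale.finiteness.hyps} in the relevant cases.
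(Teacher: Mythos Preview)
The overall shape of your argument matches the paper's---reduce to motivic cohomology via the slice tower using P\'elaez's module structures, then take $\holimn$---but there is a genuine gap at the step you yourself flag. You propose to endow each truncation $s_{\le n}\rmM$ with a weak $\rmH(\mathbb Z)$-module structure by induction along the cofiber sequences $s_n\rmM \to s_{\le n}\rmM \to s_{\le n-1}\rmM$; this fails in general. Weak (homotopy-associative) module structures do not extend across cofiber sequences, and in fact the truncations typically carry no such structure: already for $\rmM = \mbS_{\k}$ each slice is an $\rmH(\mathbb Z)$-module but $s_{\le n}\mbS_{\k}$ is not for $n\ge 1$. So Corollary~\ref{multiplicative.prop.1} cannot be applied directly with coefficient spectrum $s_{\le n}\rmM$.

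The paper avoids this obstruction by routing through the slice spectral sequence rather than module structures on the truncations. Because $\tr(\rmf_{\rmY})$ is a stable map it induces a map of the slice spectral sequences; the $E_2$-terms are computed by the slices $s_n\rmM$, which \emph{are} weak $\rmH(\mathbb Z)$-modules, so Corollary~\ref{multiplicative.prop.1} gives an isomorphism of $E_2$-terms and hence of all $E_r$ and $E_\infty$. For each fixed $n$ the spectral sequence for $s_{\le n}\rmM$ has $E_1^{u,v}=0$ outside $0\le u\le n$ and so converges strongly, whence $\tr(\rmf_{\rmY})^*\circ\pi_{\rmY}^*$ is a weak equivalence on the hypercohomology spectra $\H_{\Nis}(Y_m,s_{\le n}\rmM)$; passing to $\holimm\holimn$ then requires no $\lim^1$ argument (your appeal to~\eqref{etale.finiteness.hyps} here is misplaced---those are \'etale finiteness hypotheses, not relevant to the motivic tower). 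Your treatment of (ii) and (iii) via the appropriate Eilenberg--Maclane spectra and Propositions~\ref{trace.2} and~\ref{no.need.special} is correct and matches the paper.
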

\begin{proof} %{\bf Proof of Theorem ~\ref{stable.splitting.2}}. 
\label{slice.compl.splittings}
%\vskip .1cm \index{slice completed cohomology theory and splittings}
 One key observation here is that  the map $\tr(\rmf_{\rmY})$ being a stable map, it induces a map of the stable slice spectral sequences for 
$\rmh^{*, \bullet}(\rmE\times_{\rmG}(\rmX \times \rmY), M)$ and  $\rmh^{*, \bullet}(\rmE\times_{\rmG}\rmY , M)$. (One may observe that 
the  slice spectral sequences converge only conditionally, in general: the convergence issues will be discussed below.)
Next we will show, under the hypotheses of the theorem, that multiplication by 
$\tr(\rmf_{\rmY})^*(\pi_{\rmY}^*(1))$ induces a splitting of the corresponding $\rmE_2$-terms of the above
spectral sequences. For this, recall that the multiplicative properties of the slice filtration, verified in \cite{Pel}, shows that these 
$\rmE_2$-terms are  modules over the motivic cohomology and that, in fact these $\rmE_2$-terms are 
defined by motivic spectra (that is, the slices) that are module
spectra over the motivic Eilenberg-Maclane spectrum.
\vskip .1cm 
Therefore, under these assumptions, the multiplicative property of the transfer as in Corollary ~\ref{multiplicative.prop.1} with $\rmA$ there denoting the motivic Eilenberg-Maclane
spectrum ${\mathbb H}({\mathbb Z})$ and the module spectrum $\rmM$ there denoting the module spectra defining the above $\rmE_2$-terms, shows
that $\tr(\rmf_{\rmY})^*\circ \pi_{\rmY}^*$ induces a map of the $\rmE_2$-terms of the above motivic Atiyah-Hirzebruch spectral sequences. 
That is, we reduce to proving $\tr(\rmf_{\rmY})^*\circ \pi_{\rmY}^*$ induces an isomorphism on the motivic cohomology of $\rmE\times_{\rmG}\rmY$,
modulo the convergence issues of the spectral sequence.
\vskip .1cm
Next we discuss convergence issues of the spectral sequence. Since the map $\tr(\rmf_{\rmY})^*\circ \pi_{\rmY}^*$ induces an isomorphism at the $\rmE_2$-terms, and therefore at all the $\rmE_r$-terms for $r \ge 2$,
it follows that it induces an isomorphism of the inverse systems $\{\rmE_r|r \}$ and therefore an isomorphism of the $\rmE_{\infty}$-terms and the 
 derived $\rmE_{\infty}$-terms. (See \cite[(5.1)]{Board} for a description of the derived $\rmE_{\infty}$-terms. It is shown in 
\cite[Chapter XV, section 2]{CE}  that both the $\rmE_{\infty}$-terms and the derived $\rmE_{\infty}$-terms
are determined by the sequence $\rmE_r$, $r \ge 2$.)
\vskip .2cm
Next, let $\{\rmY_m|m\}$ denote either one of the following ind-schemes: $\rmY_m = \rmE\times _{\rmG}\rmY$, for all $m\ge 1$ or 
$\rmY_m=\EG^{\it gm,m}\times_{\rmG}\rmY$, $m \ge 1$.
The next observation is that for every fixed integer $n$ and $m$, on replacing the spectrum $\rmM$ by $s_{\le n}\rmM$, the corresponding
slice spectral sequence for the schemes $\rmY_m$ converge strongly: this is clear since the $\rmE_1^{\it u,v}$-terms will vanish for all $u>n$ and also for $u<0$.
(See \cite[Theorem 7.1]{Board}.)
That $\rmE_1^{\it u, v}=0$ for $u<0$ or $u>n$ follows from the identification of the $\rmE_1$-terms of the spectral sequence in terms of the slices of the $\rmS^1$-spectrum
forming the $0$-th term in the associated $\Omega$-${\mathbb P}^1$-spectrum: see \cite[Proof of Theorem  11.3.3]{Lev}. Moreover, the
abutment of the spectral sequence are the homotopy groups of  the slice-completion of the $\rmS^1$-spectrum forming the corresponding $0$-th term.
Then, it follows therefore that for each fixed integer $m$ and $n$, the composite map 
\[\tr(\rmf_{\rmY})^* \circ \pi_{\rmY}^* :{\mathbb H}_{Nis}(Y_m, s_{\le n} M) \ra {\mathbb H}_{Nis}(Y_m, s_{\le n} M)\]
is a weak-equivalence, provided $\tr(\rmf_{\rmY})^*\circ \pi_{\rmY}^*$ induces an isomorphism on the motivic cohomology of $\rmE\times_{\rmG}\rmY$,
where ${\mathbb H}_{Nis}$ denotes the hypercohomology spectrum on the Nisnevich site. 
\vskip .1cm
Therefore, from the compatibility of the transfer as $m$ varies as proven in \cite[8.3: Step 2]{CJ23-T1},  it 
follows that the composite map
\[\tr(\rmf_{\rmY})^* \circ \pi_{\rmY}^*:\holim_{\infty \leftarrow m}\holim_{\infty \leftarrow n} {\mathbb H}_{Nis}(Y_m, s_{\le n} M) \ra \holim_{\infty \leftarrow m}\holim_{\infty \leftarrow n} {\mathbb H}_{Nis}(Y_m, s_{\le n} M)\]
is a weak-equivalence. 
\vskip .2cm
Next observe that the above discussion already proves the
first statement in Theorem ~\ref{stable.splitting.2}(i).  Then the second statement there follows by observing that ${\Sigma^{\infty}_{\T}}\rmX_+$ is dualizable in $\Spt(\k_{\rm mot})$.
 These complete the proof of Theorem  ~\ref{stable.splitting.2}(i).
\vskip .1cm
In order to prove the variant in {\it Theorem ~\ref{stable.splitting.2}(ii)(a)}, it suffices to observe that the slices of the module spectrum $\rmM$ are now module spectra over
$\mbS_{\k}[{\it p}^{-1}]$ and that the zero-th slice of $\mbS_{\k}[{\it p}^{-1}] = \H({\mathbb Z}[{\it p}^{-1}])$. Then  essentially the same arguments as above apply, along with \cite[Theorem 7.1]{CJ23-T1} to complete the proof of statement (a). In order to prove the
 variant in Theorem ~\ref{stable.splitting.2}(ii)(b), it suffices to observe that the slices of the module spectrum $\rmM$ are module spectra over ${\mbS_{\k (\ell)}}$ (${\widehat \mbS}_{\k, \ell}$)
and that the zero-th slice of $\mbS_{\k,(\ell)}$ is $\H({\mathbb Z}_{(\ell)})$ (the zero-th slice of ${\widehat \mbS}_{\k, \ell}$ is $\H({\mathbb Z}\compl_{\ell})$, \res). 
In fact, both these statements follow readily by identifying the slice tower with the coniveau tower as in \cite{Lev}.
These complete the proof of Theorem ~\ref{stable.splitting.2}(ii)(a) and (b). 
%\vskip .1cm
%The \'etale variant in Theorem ~\ref{stable.splitting.2}(iii) follows first by observing that Postnikov sections of the module spectrum $\rmM$ are module spectra
% over the $\ell$-completed $\rmS^1$-sphere spectrum $\widehat {\Sigma^{\infty}}_{S^1, \ell}$. 
\vskip .2cm
We next discuss the  statement  Theorem ~\ref{stable.splitting.2}(iii).
 Observe that $\rmY$ and $\rmX$ are assumed
 to be smooth schemes of finite type over $\k$ and that the group $\rmG$ is assumed to be special. Therefore, we invoke Propositions
~\ref{trace.2}, ~\ref{no.need.special}. Proposition ~\ref{no.need.special} shows that the hypotheses of Proposition ~\ref{trace.2} are 
~satisfied by the motivic cohomology with respect to a commutative ring $\rmR$, so that Proposition ~\ref{trace.2} proves
the statement in Theorem ~\ref{stable.splitting.2}(iii).

\end{proof}

\begin{proposition}
\label{splitting.et}
 Assume in addition to the hypotheses of Theorem ~\ref{stable.splitting.2}(iii) that the scheme $\rmY$ is geometrically connected
 and that the base field $k$ satisfies the hypothesis 
~\eqref{etale.finiteness.hyps}, where $\ell$ is prime to the characteristic of $k$ and $\nu$ is a positive integer.
 \begin{enumerate}[\rm(i)]
 \item Then denoting by $\rmY_{\bar \k}$ the base change of $\rmY$ to the algebraic closure $\bar \k$, the class
 $(id_{\rmY} \times \tau_{\rmX}(\rmf))^*(1)$ identifies with the class $(id_{\rmY_{\bar \k}} \times \tau_{\rmX}(\rmf))^*(1) \in \rmH^0_{et}(\rmY_{\bar {\k}}, {\mathbb Z}/\ell^{\nu})$
 under the isomorphisms 
 \[\rmH^{0,0}(\rmY, {\mathbb Z}/\ell^{\nu}) {\overset {\cong} \ra} \rmH^{0, 0}(\rmY_{\bar {\k}}, {\mathbb Z}/\ell^{\nu}) {\overset {\cong } \ra} \rmH^0_{et}(\rmY_{\bar {\k}}, {\mathbb Z}/\ell^{\nu}).\]
 Therefore, under the  hypotheses that $(id_{\rmY_{\bar \k}} \times \tau_{\rmX}(\rmf))^*(1) \in \rmH^0_{et}(\rmY_{\bar {\k}}, {\mathbb Z}/\ell^{\nu})$ is a unit,
 $\tr(\rmf_{\rmY})^*$ provides a splitting to
to the map $\pi_{\rmY}^*: \hat \rmh^{*, \bullet}(\rmE\times_{\rmG}\rmY, \rmM) \ra \hat \rmh^{*, \bullet}(\rmE\times_{\rmG}(\rmY \times \rmX), \rmM)$,
provided the slices of the motivic spectrum $\rmM$ are module spectra over the motivic Eilenberg-Maclane spectrum $\rmH({\mathbb Z}/\ell^{\nu})$.
\item
If $\rmY$ is a geometrically connected smooth scheme and
$\rmE \times _{\rmG} \rmY$ is direct limit of an ind-scheme $\{\rmE_n \times_{\rmG} \rmY|n \ge 1\}$ with
each $\rmE_n \times_{\rmG} \rmY$ geometrically connected smooth scheme,  and $\ell$ is a fixed prime different from $char (\k)$,
$(id_{\rmY_{\bar {\k}}} \times \tau_{\rmX}(\rmf))^*(1)$ identifies with $\tr(\rmf_{\rmY})^*(1)$ under the isomorphism:
\[\rmH^0_{et}((\rmE\times_{\rmG}\rmY)_{\bar \k}, {\mathbb Z}/\ell^{\nu}) \cong \invlimn \rmH^0_{et}((\rmE_n\times_{\rmG}\rmY)_{\bar {\k}}, {\mathbb Z}/\ell^{\nu}) {\overset {\cong} \ra} \rmH^0_{et}(\rmY_{\bar {\k}}, {\mathbb Z}/\ell^{\nu})\]
where $(\rmE_n\times_{\rmG}\rmY)_{\bar {\k}}$ denotes the base change of $\rmE_n\times_{\rmG}\rmY$ to $Spec \, \bar \k$.
\item Therefore, under the above assumptions on $\rmY$ and $\rmE\times_{\rmG} \rmY$ as in (ii), and on the spectrum $\rmM$ as in (i),
the condition that $\tr(\rmf_{\rmY})^*(1) \in \rmH^0_{et}((\rmE\times_{\rmG}\rmY)_{\bar {\k}}, {\mathbb Z}/\ell^{\nu})$ is 
a unit proves that $\tr(\rmf_{\rmY})^*$ provides a splitting to
to the map $\pi_{\rmY}^*: \hat \rmh^{*, \bullet}(\rmE\times_{\rmG}\rmY, \rmM) \ra \hat \rmh^{*, \bullet}(\rmE\times_{\rmG}(\rmY \times \rmX), \rmM)$.
 \end{enumerate}
\end{proposition}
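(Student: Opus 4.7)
My plan is to dispatch the three statements in order, using the compatibility of the transfer with base change (Proposition \ref{base.change}) and with \'etale realization (Proposition \ref{transf.real} and Corollary \ref{transf.compat}), together with the identification of the trace in $\mathrm{H}^{0,0}$ already established in Proposition \ref{trace.2}.

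For part (i), I would first observe that since $\rmY$ is smooth and geometrically connected, the motivic complex $(\bZ/\ell^{\nu})(0)$ is the constant Nisnevich sheaf $\bZ/\ell^{\nu}$, so $\rmH^{0,0}(\rmY,\bZ/\ell^{\nu})\cong \bZ/\ell^{\nu}\cong \rmH^{0,0}(\rmY_{\bar\k},\bZ/\ell^{\nu})$, and both further coincide with $\rmH^{0}_{et}(\rmY_{\bar\k},\bZ/\ell^{\nu})$ (for the latter, by Proposition \ref{et.real.Eilen.Mac} or directly from the comparison in degree~$0$ over an algebraically closed field). The class $(id_{\rmY}\times \tau_{\rmX}(\rmf))^*(1)$ is natural under base change, so it maps to $(id_{\rmY_{\bar\k}}\times \tau_{\rmX}(\rmf))^*(1)$ under the first isomorphism, and the second isomorphism is the identity on the underlying constant-sheaf element since both cohomology groups are computed against the same constant sheaf. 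Given this identification, if the \'etale class is a unit in $\bZ/\ell^{\nu}$, so is the motivic class; since the slices of $\rmM$ are assumed to be $\rmH(\bZ/\ell^{\nu})$-modules, Theorem \ref{stable.splitting.2}(iii) applies and $\tr(\rmf_{\rmY})^*$ splits $\pi_{\rmY}^*$.

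For part (ii), the strategy is to reduce the class $\tr(\rmf_{\rmY})^*(1)$ on the ind-scheme to its restriction to a fiber. By Theorem \ref{thm.1}(i), the transfer maps $\tr(\rmf_{\rmY})^m$ for $\rmE_n\times_{\rmG}\rmY$ are compatible as $n$ varies, yielding
\[
  \rmH^{0}_{et}((\rmE\times_{\rmG}\rmY)_{\bar\k},\bZ/\ell^{\nu}) \cong \invlimn \rmH^{0}_{et}((\rmE_n\times_{\rmG}\rmY)_{\bar\k},\bZ/\ell^{\nu}).
\]
Since each $\rmE_n\times_{\rmG}\rmY$ is a geometrically connected smooth scheme, every term of this inverse system is canonically $\bZ/\ell^{\nu}$, and the assumption (\ref{rat.point}) (verified in this context by Proposition \ref{no.need.special}) says the restriction to a $\k$-rational fiber $\rmY$ realizes each of these identifications. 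The key compatibility now is that $\tr(\rmf_{\rmY})^*(1)$, restricted to such a fiber, equals $(id_{\rmY}\times \tau_{\rmX}(\rmf))^*(1)$ by the argument of Proposition \ref{trace.2} (using that $\rmG$ is special, so the pre-transfer is just $id_{\rmY}\wedge \tr'(\rmf)$, and Proposition \ref{base.change} for naturality under fiber inclusion). Passing to $\bar\k$ via Corollary \ref{transf.compat}, the class $\tr(\rmf_{\rmY})^*(1)\in \rmH^{0}_{et}((\rmE\times_{\rmG}\rmY)_{\bar\k},\bZ/\ell^{\nu})$ identifies with $(id_{\rmY_{\bar\k}}\times \tau_{\rmX}(\rmf))^*(1)\in \rmH^{0}_{et}(\rmY_{\bar\k},\bZ/\ell^{\nu})$.

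Part (iii) is then a direct combination: assume $\tr(\rmf_{\rmY})^*(1)\in \rmH^{0}_{et}((\rmE\times_{\rmG}\rmY)_{\bar\k},\bZ/\ell^{\nu})$ is a unit. By (ii), this class equals $(id_{\rmY_{\bar\k}}\times \tau_{\rmX}(\rmf))^*(1)$, and by (i), transported back to $\rmH^{0,0}(\rmE\times_{\rmG}\rmY,\bZ/\ell^{\nu})$ it still represents a unit. Theorem \ref{stable.splitting.2}(iii) then furnishes the desired splitting of $\pi_{\rmY}^*$ on slice-completed generalized motivic cohomology. The main obstacle I expect is part (ii): verifying that under the inverse limit identification of \'etale $\mathrm{H}^{0}$'s, the class $\tr(\rmf_{\rmY})^*(1)$ really is compatible with the pre-transfer-on-a-fiber calculation. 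This requires combining the step-by-step construction of the transfer in \cite[\S8.3]{CJ23-T1} with both base-change (Proposition \ref{base.change}) and \'etale realization compatibility (Corollary \ref{transf.compat}), and checking that these commute coherently with restriction to a $\k$-rational point of $\rmB$; the rest of the argument is essentially bookkeeping on constant sheaves.
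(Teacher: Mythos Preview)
Your proposal is correct and follows essentially the same route as the paper: reduce to the trace on a fiber via Proposition~\ref{trace.2} and base-change (Proposition~\ref{base.change}), pass to \'etale cohomology over $\bar\k$ via the compatibility of the pre-transfer with \'etale realization, and then feed the resulting unit into Theorem~\ref{stable.splitting.2}(iii). The paper's proof is terser but makes one point more explicit than you do: in part~(i), the passage from $\rmH^{0,0}(\rmY_{\bar\k},\bZ/\ell^{\nu})$ to $\rmH^0_{et}(\rmY_{\bar\k},\bZ/\ell^{\nu})$ is not just ``the identity on the underlying constant-sheaf element''---you must know that $\epsilon^*$ carries the motivic trace $\tau_{\rmX}(\rmf)$ to the \'etale trace, which is exactly Proposition~\ref{transf.real} (the paper also invokes \cite[Proposition~8.3]{CJ23-T1} to recast the trace as a trace in $\Spt(\k_{\rm mot},\H(\bZ/\ell^{\nu}))$ before applying \'etale realization). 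You already cite Corollary~\ref{transf.compat} in part~(ii), so the fix is simply to invoke Proposition~\ref{transf.real} in part~(i) as well rather than relying on the constant-sheaf heuristic.
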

%\vskip .2cm \noindent
%{\bf Proof of Proposition ~\ref{splitting.et}}. 
\begin{proof} We already proved in Proposition ~\ref{transf.real} %and ~\ref{transf.real.2} 
that the pre-transfer is compatible with \'etale realization. In fact, one may take the 
commutative motivic ring spectrum $\cE$ to be the motivic Eilenberg-Maclane spectrum $\H({\mathbb Z}/\ell^{\nu})$. Then \cite[Theorem 7.3]{CJ23-T1}
shows $\cE \wedge X_+$ is dualizable in $\Spt(\k_{\rm mot}, {\cE})$ and Proposition ~\ref{transf.real} %and ~\ref{transf.real.2} 
shows that the pre-transfer is compatible with \'etale realizations. (Here are some more details
on the above arguments:
Let ${\mathbb H}(\rmR)$ denote the motivic spectrum representing motivic cohomology with respect to the commutative ring $\rmR$.
Now invoking  \cite[Proposition 8.3]{CJ23-T1}, we see that 
\be \begin{align}
 \label{trace.unit.slice.compl.ths}
(id _{\rmY_{+}}\wedge \tau_{\rmX}(\rmf))^*(1) &= (id _{\rmY_{+}} \wedge  {\mathbb H}(\rmR) \wedge \tau_{\rmX}(\rmf))^*(1)\\
                                                &= (id _{\rmY_{+}} \wedge \tau_{{\mathbb H}(\rmR) \wedge \rmX_+}(id \wedge \rmf))^*(1). \notag
\end{align} \ee
\vskip .1cm \noindent
Next we take $\rmR ={\mathbb Z}/\ell^{\nu}$, for $\ell$ prime to $char (\k)$.)
\vskip .1cm
These prove the first 
statement and the second statement is clear.
 The third statement then follows from the second in view of Theorem ~\ref{stable.splitting.2}(iii).
\end{proof}

\begin{remark}
%\begin{enumerate}[\rm(i)]
 %\item 
 If the base field is the field of complex numbers $\Cl$, one obtains similar results with respect to 
 Betti realization: the corresponding hypotheses will be that the schemes $\rmY$ and $\rmE\times_{\rmG}\rmY$ and the
 spaces $\rmY(\Cl)$ and $(\rmE\times_{\rmG}\rmY)(\Cl)$ are 
  connected,
  so that restriction maps $\rmH^{0, 0}(\rmE\times_{\rmG}\rmY, {\mathbb Z}) \ra \rmH^{0,0}(\rmY, {\mathbb Z})$ and
  $\rmH^{0}((\rmE\times_{\rmG}\rmY)(\Cl), {\mathbb Z}) \ra \rmH^{0}(\rmY(\Cl), {\mathbb Z})$ are isomorphisms, where the first (second)
  map is on motivic (singular) cohomology. Then, the 
  conclusion will be that the map $\pi_{\rmY}^*$ on motivic cohomology will
  be a split injection, provided $\tr(\rmf_{\rmY(\Cl)})^*(1)$ is a unit in $\rmH^{0}((\rmE\times_{\rmG}\rmY)(\Cl), {\mathbb Z})$.
  This is under the assumption that the transfer is compatible with Betti realization, which we have not proved in detail: see 
  \cite{Bain} for a proof in the case of the pre-transfer.
\end{remark}

\vskip .2cm
For the motivic spectrum representing Algebraic K-theory, the slice completed generalized motivic cohomology identifies 
with Algebraic K-theory. For a smooth ind-scheme $\Y=\{\rmY_m|m\}$, we let its algebraic
 K-theory spectrum be defined as ${\mathbf K}(\Y) = \holimm \{{\mathbf K}(\rmY_m)|m\}$. This provides the following corollary.
\begin{corollary} 
\label{Kth.splitting}
Let $\pi_{\rmY}:\rmE\times_{\rmG}(\rmY \times \rmX) \ra \rmE\times_{\rmG} \rmY$ and $\rmf$ denote maps as in Theorem ~\ref{stable.splitting.2}, with the 
group $\rmG$ any linear algebraic group that is special.
\begin{enumerate}[\rm(i)]
\item Then, 
$\pi_{\rmY}^*:{\mathbf K}(\rmE\times_{\rmG} \rmY) \ra {\mathbf K}(\rmE{\underset {\rmG}\times}(\rmY \times \rmX))$ is a split injection on homotopy groups, where ${\mathbf K}$ denotes the
motivic spectrum representing Algebraic K-theory, provided ${\Sigma^{\infty}_{\T}}\rmX_+$ is dualizable in $\SH(\k)$ and 
 $\tr(\rmf_{\rmY})^*(1)$ is 
a unit in $\rmH^{0, 0}(\rmE\times_{\rmG} \rmY, {\mathbb Z}) \cong \CH^0(\rmE\times_{\rmG} \rmY)$. In particular, this holds for smooth quasi-projective schemes
$\rmX$, $\rmY$ defined over the field $\k$ with $char (k) =0$, provided the above condition on $\tr(\rmf_{\rmY})^*(1)$ holds.
\vskip .1cm
\item Assume in addition to the above situation that the hypotheses in Proposition ~\ref{splitting.et}(ii) are satisfied.
  Then,
\[\pi_{\rmY}^*:{\mathbf K}(\rmE\times_{\rmG} \rmY)\wedge \rmM(\ell^{\nu}) \ra {\mathbf K}(\rmE\times_{\rmG}(\rmY \times \rmX)) \wedge \rmM(\ell ^{\nu})\]
is a split injection on homotopy groups, where $\rmM(\ell^{\nu})$ denotes the Moore spectrum defined
as the homotopy cofiber ${\Sigma^{\infty}_{\T}} {\overset {\ell^{\nu}} \ra} {\Sigma^{\infty}_{\T}}$, provided the following hold:
 ${\Sigma^{\infty}_{\T}}\rmX_+$ is dualizable in $\SH(\k, \mbS[\rmp^{\rm -1}])$, where $p$ is the characteristic exponent of $\k$,
 and $\tr(\rmf_{\rmY_{\bar \k}})^*(1)$ is a unit in $ \rmH^0_{et}((\rmE\times_{\rmG} \rmY)_{\bar \k}, Z/\ell^{\nu})$.
In particular, this holds for smooth quasi-projective schemes $\rmX$ defined over the field $\k$, with $char (\k)=p$, provided the above hypotheses hold.
\end{enumerate}
\end{corollary}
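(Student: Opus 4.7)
The plan is to reduce both parts to the splittings already established in Theorem~\ref{stable.splitting.2}(iii) and Proposition~\ref{splitting.et} by exploiting the fact that the motivic K-theory spectrum $\mathbf{K}$ is slice-complete (as recorded in Remark~\ref{slice.compl.sp}), so that the slice-completed generalized cohomology $\hat{h}^{*,\bullet}(\,\cdot\,,\mathbf{K})$ recovers the actual K-theory groups. The multiplicative structure on the slice tower (used in the proof of Theorem~\ref{stable.splitting.2}) equips each slice $s_q\mathbf{K}$ with the structure of a module spectrum over the motivic Eilenberg--MacLane spectrum $\mathbb{H}(\mathbb{Z})$, which is the ring-spectrum hypothesis needed to invoke those theorems.

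For part (i), I would first verify the hypotheses of Theorem~\ref{stable.splitting.2}(iii) with $\rmR = \mathbb{Z}$: $\rmY$ and $\rmE\times_{\rmG}\rmY$ are connected (so that Proposition~\ref{no.need.special} applies and the identification $\rmH^{0,0}(\rmE\times_{\rmG}\rmY,\mathbb{Z}) \cong \CH^0(\rmE\times_{\rmG}\rmY)$ is in force), and $\Sigma^\infty_{\rmT}\rmX_+$ is dualizable. Under the assumption that $\tr(\rmf_{\rmY})^*(1)$ is a unit in $\rmH^{0,0}(\rmE\times_{\rmG}\rmY,\mathbb{Z})$, Theorem~\ref{stable.splitting.2}(iii) provides a splitting of $\pi_{\rmY}^*$ on $\hat{h}^{*,\bullet}(\,\cdot\,,\mathbf{K})$. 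Taking homotopy groups of the spectrum-level splitting (and using slice-completeness of $\mathbf{K}$ to identify $\hat{h}^{*,\bullet}(\,\cdot\,,\mathbf{K})$ with K-theory) yields the desired split injection $\pi_{\rmY}^* : \mathbf{K}(\rmE\times_{\rmG}\rmY)_* \hookrightarrow \mathbf{K}(\rmE\times_{\rmG}(\rmY\times\rmX))_*$.

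For part (ii), the analogous argument goes through after smashing with the Moore spectrum $\rmM(\ell^\nu)$. The slices of $\mathbf{K}\wedge \rmM(\ell^\nu)$ are module spectra over $\mathbb{H}(\mathbb{Z}/\ell^\nu)$, which places us in the setting of the final clause of Proposition~\ref{splitting.et}. The geometric connectivity assumptions on $\rmY$ and on the scheme-level terms of the ind-scheme defining $\rmE\times_{\rmG}\rmY$ let me invoke Proposition~\ref{splitting.et}(ii) to transfer the unit condition from $\rmH^0_{et}((\rmE\times_{\rmG}\rmY)_{\bar\k},\mathbb{Z}/\ell^\nu)$ back to the motivic class $\tr(\rmf_{\rmY})^*(1)$ via the isomorphisms relating motivic $\rmH^{0,0}$ over $\k$, motivic $\rmH^{0,0}$ over $\bar\k$, and $\rmH^0_{et}$ over $\bar\k$. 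Then Proposition~\ref{splitting.et}(iii) supplies the splitting of $\pi_{\rmY}^*$ on $\hat{h}^{*,\bullet}(\,\cdot\,,\mathbf{K}\wedge \rmM(\ell^\nu))$, and taking homotopy groups completes the proof.

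The main point requiring care (rather than a genuine obstacle) is the passage from the splitting of the slice-completed cohomology spectrum back to a splitting on honest K-theory homotopy groups. Here one uses that $\mathbf{K}$ is slice-complete, so $\hat{h}^{*,\bullet}(\rmZ,\mathbf{K}) \simeq \mathbf{K}(\rmZ)$ for a smooth $\rmZ$, and the ind-scheme variant for $\rmE\times_{\rmG}\rmY$ is handled by the definition $\mathbf{K}(\mathcal{Y}) = \holimm \mathbf{K}(\rmY_m)$ together with the compatibility of the transfer across finite-dimensional approximations established in Theorem~\ref{thm.1}(i). With this identification in place, everything reduces mechanically to the two earlier splitting theorems.
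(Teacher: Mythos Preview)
Your proposal is correct and follows essentially the same route as the paper: reduce to Theorem~\ref{stable.splitting.2} and Proposition~\ref{splitting.et} using slice-completeness of $\mathbf{K}$, with the slices being $\mathbb{H}(\mathbb{Z})$-modules (resp.\ $\mathbb{H}(\mathbb{Z}/\ell^\nu)$-modules after smashing with the Moore spectrum). The paper spells out one step you assert, namely the identification $s_p(\mathbf{K}\wedge \rmM(\ell^\nu)) \simeq s_p(\mathbf{K})\wedge \rmM(\ell^\nu) \simeq \mathbb{H}(\mathbb{Z}/\ell^\nu(p)[2p])$ via the coniveau description of slices, which is what actually gives the $\mathbb{H}(\mathbb{Z}/\ell^\nu)$-module structure needed for part (ii); you should make this explicit.
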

 \begin{proof}% {\bf Proofs of Corollary ~\ref{Kth.splitting}}. 
 The slice completed generalized motivic cohomology of any smooth scheme with respect to the motivic spectrum representing Algebraic K-theory,
 identifies with Algebraic K-theory itself. This proves the first statement in  Corollary ~\ref{Kth.splitting}.
 The second statement in Corollary ~\ref{Kth.splitting} now follows from the following observations.
 \vskip .2cm
 First we observe the weak-equivalence for any motivic spectrum $\cE$: $s_p(\cE) \wedge {\Sigma^{\infty}_{\T}}\rmM(\ell^{\nu}) \simeq s_p(\cE\wedge _{{\Sigma^{\infty}}_\T}\rmM(\ell^{\nu}))$, where
 $\rmM(\ell^{\nu})$ is defined as the homotopy cofiber of the map ${\Sigma^{\infty}_{\T}} {\overset {\ell^{\nu}} \ra} {\Sigma^{\infty}_{\T}}$, and where $s_p$ denotes the
 $p$-th slice. This follows from the identification of the slices, with the slices obtained from the coniveau tower as in
 \cite[Theorem 9.0.3]{Lev}.
 Let ${\mathbf K}$ denote the motivic spectrum representing algebraic K-theory.
 Next we recall (see \cite[section 11.3]{Lev}) that the slice $s_0({\mathbf K}) = \H({\mathbb Z})$ = the motivic Eilenberg-Maclane
 spectrum and that the $p$-th slice $s_p({\mathbf K}) = \H({\mathbb Z}(p)[2p])$, which is the corresponding shifted motivic
 Eilenberg-Maclane spectrum. Therefore, each $s_p({\mathbf K})$ has the structure of a module spectrum over the commutative ring spectrum
 $\H({\mathbb Z})$. In view of this, one may also observe that the natural map $s_p({\mathbf K})\wedge _{{\Sigma^{\infty}_{\T}}} \rmM(\ell^{\nu}) \ra 
 s_p({\mathbf K}) \wedge_{\H{\mathbb Z}} \H({\mathbb Z}/\ell^{\nu}) = \H({\mathbb Z}/\ell ^{\nu}(p)[2p])$ is a weak-equivalence, where $\H({\mathbb Z}/\ell^{\nu})$ denotes the 
 mod$-\ell^{\nu}$ motivic Eilenberg-Maclane spectrum. Therefore, the slices $s_p({\mathbf K}\wedge _{{\Sigma^{\infty}_{\T}}} \rmM(\ell^{\nu})) \simeq
 s_p({\mathbf K})\wedge _{{\Sigma^{\infty}_{\T}}} \rmM(\ell^{\nu})$ have the structure of weak-module spectra over the motivic
 Eilenberg-Maclane spectrum $\H({\mathbb Z}/\ell^{\nu})$. Therefore, the hypotheses of the statement in Theorem ~\ref{stable.splitting.2}(iii)
 are met.
 \vskip .1cm
 The additional assumptions then verify that the hypotheses of Proposition ~\ref{splitting.et}(iii) are also met
 thereby completing the proof of the second statement in Corollary ~\ref{Kth.splitting}. (One may also want to observe that
the spectrum ${\mathbf K}\wedge _{{\Sigma^{\infty}_{\T}}} \rmM(\ell^{\nu})$ has cohomological descent on the Nisnevich site of smooth schemes of finite type over $k$ so that
the generalized cohomology ${\rm h}(\rmX,{\mathbf K}\wedge _{{\Sigma^{\infty}_{\T}}} \rmM(\ell^{\nu})) \simeq {\mathbf K}(\rmX) \wedge _{{\Sigma^{\infty}_{\T}}} \rmM(\ell^{\nu})$ for any smooth scheme $\rmX$ of finite type over $k$.)
\end{proof}
 
\vskip .2cm %\noindent
%\vskip .1cm
The only {\it disadvantages} for this method seems to be that we need
to assume that the base $\rmB$ of the torsor is connected, the object $\rmY$ is a geometrically connected smooth scheme of finite type over $\k$, $\rmG$ is assumed to be special, 
%or , 
and also because this method applies only to slice-completed generalized motivic cohomology theories. However, as several
important examples of generalized motivic cohomology theories, such as Algebraic K-theory and Algebraic Cobordism
are slice-complete, there do not seem to be any serious disadvantages.
\normalfont
\vskip .3cm
\section{\bf Appendix: Further details on \'Etale realization}
As we pointed out already in Remark ~\ref{remark.et.real}, there is a discussion of \'etale realization in the unstable setting in \cite{Isak} and also \cite{Schm}. 
Though the discussions there have the expected good properties, these do not seem to extend to define an \'etale realization in the stable setting with good properties, partially because of
the following: the target of the realization functors discussed in \cite{Isak} and \cite{Schm} is the pro-category of inverse systems of
simplicial sets provided by applying the connected component functor degree-wise to the inverse systems of (rigid) \'etale hypercoverings. 
Unfortunately, the model structure on pro-simplicial sets (or for pro-objects in general model categories) 
that have been discussed in the literature, such as those in \cite{Isak2} are {\it not cofibrantly generated} as in fact pointed out in op. cit. As a result, obtaining a suitable 
stable model structure for the category of spectra in such pro-categories with good properties seems doubtful.
\vskip .1cm
We will circumvent these problems, and will sketch here a different process of \'etale realization that works stably without any of these difficulties stemming from having
to deal with pro-objects. This will also be a straightforward extension of the \'etale realization functors briefly discussed earlier in section ~\ref{et.real.1}.
%%%
\vskip .1cm
As is well-known, \'etale cohomology is not well-behaved unless one restricts to torsion coefficients, with torsion prime to the residue characteristics.
This shows therefore, the key role played by Bousfield-Kan type completion for spectra in \'etale setting. We will first discuss such completions
in the ${\mathbb A}^1$-setting.
\vskip .2cm

\subsection{$\Zl$-completions and spectra of $\Zl$-vector spaces}
\label{Zl-completions}
We will assume the framework of \cite[section 1]{CJ23-T1} in what follows. Throughout the following discussion, we will let ${\bZ}$ denote the ring of 
integers.
 \begin{enumerate}[\rm(i) ]
\item
For a pointed simplicial presheaf $\rmP \eps \Spc_*^{\rmG}(\k_{\rm mot})$, $\Zl(\rmP)$ will
denote the presheaf of simplicial $\Zl$-vector spaces defined in \cite[Chapter,
2.1]{B-K}. Observe that for each $n \ge 0$, $\Zl(\rmP)_n$ is the quotient of the free $\Zl$-module on $\rmP_n$ 
by $\Zl(*)$, where $*$ is the base point of $\rmP_n$. It follows readily that 
\be \begin{equation} 
\label{Zl.ident.1}
\Zl(\rmP \wedge \rmQ) \cong \Zl(\rmP)\otimes_{\Zl}\Zl(\rmQ).
\end{equation} \ee

\item 
Observe also that the presheaves of homotopy  groups of $\Zl(\rmP)$ identify with the
reduced homology presheaves of $\rmP$ with respect to the ring $\Zl$. Hence these
are all $\ell$-primary torsion. The functoriality of this construction shows that if $\rmP$ has
an action by the  group $\group$, then $\Zl(\rmP)$ inherits this action. 

\item
Next one extends the construction in the previous step to spectra: as we work largely with the category of spectra ${\widetilde {\Spt}}^{\rmG}(\k_{\rm mot})$ 
and ${\widetilde {\Spt}}^{\rmG}(\k_{et})$ we will only consider these categories.
Let $\X \in {\widetilde {\Spt}}^{\rmG}(\k_{\rm mot})$. Then first one
applies
the functor $(\quad ) \mapsto \Zl(\quad)$ to each $\X(\rmT_{\rmW})$, $\rmT_{\rmW} \eps \Sph^{\rmG}$. In view of ~\eqref{Zl.ident.1},  there exists a
natural map 
\[ \Zl(\Hom_{\Spc_*^{\rmG}(\k_{\rm mot})}(\rmT_{\rmU}, \rmT_{\rmU} \wedge \rmT_{\rmV})) {\underset {\Zl} \otimes} \Zl(\X(\rmT_{\rmW})) \ra \Zl(\Hom_{\Spc_*^{\rmG}(\k_{\rm mot})}(\rmT_{\rmU}, \rmT_{\rmU} \wedge \rmT_{\rmV}) \wedge \X(\rmT_{\rmW})),\]
where $\Hom_{(\Spc_*^{\rmG}(\k_{\rm mot}))}$ denotes the internal hom in the category ${\Spc_*^{\rmG}(\k)}$.
Therefore, one may compose the above maps with the obvious
map $\Zl(\Hom_{\Spc_*^{\rmG}(\k_{\rm mot})}(\rmT_{\rmU}, \rmT_{\rmU} \wedge \rmT_{\rmV})\wedge \X(\rmT_{\rmW})) \ra \Zl(\X(\rmT_{\rmV} \wedge \rmT_{\rmW}))$ to define an object in 
${\widetilde {\Spt}}^{\rmG}(\k_{\rm mot}, {\Zl}(\mbS^{\rmG}))$.

\item
A pairing $\rmM \wedge \rmN \ra \rmP$ in $\Spc_*^{\rmG}(\k_{\rm mot})$ induces a pairing
 $ \Zl(\rmM){\underset {\Zl} \otimes} \Zl(\rmN) 
\cong \Zl(\rmM \wedge \rmN) \ra \Zl(\rmP)$. Similarly 
a pairing $\X \wedge \Y \ra \Z$ in ${\widetilde {\Spt}}^{\rmG}(\k_{\rm mot})$ induces a similar 
pairing 
$ \Zl(\X) {\underset {\Zl} \otimes }\Zl(\Y) \ra \Zl(\Z)$. 
(To see this, one needs to recall the construction of the smash-product of spectra from \cite[Definition 4.2]{CJ23-T1} as a left-Kan extension.
The functor $\Zl$ commutes with this left-Kan extension.) 
This shows that if 
$\cE \in {\widetilde {\Spt}}^{\rmG}(\k_{\rm mot})$ is a ring spectrum so is $\Zl(\cE)$ and that if
 $\M \in {\widetilde {\Spt}}^{\rmG}(\k_{\rm mot})$ 
is a module spectrum over the ring spectrum $\cE$, $\Zl(\M)$ is a module spectrum over $\Zl(\cE)$.
\item
We will assume that ${\widetilde {\Spt}}^{\rmG}(\k_{\rm mot})$ is provided with the stable injective model structure, so that every object in ${\widetilde {\Spt}}^{\rmG}(\k_{\rm mot})$
is cofibrant.
If $\{\rmf: \rmA \ra \rmB\} =\oJ _{Sp}$ is a set of generating cofibrations (trivial cofibrations) for ${\widetilde {\Spt}}^{\rmG}(\k_{\rm mot})$, then,
 we will let $\{\Zl(\rmf): \Zl(\rmA) \ra \Zl(\rmB)|f \eps \oJ _{Sp}\}$ denote the set of generating  cofibrations 
(trivial cofibrations, \res) for
${\widetilde {\Spt}}^{\rmG}(\k_{\rm mot}, \Zl(\mbS^{\rmG}))$. This defines a model structure on ${\widetilde {\Spt}}^{\rmG}(\k_{\rm mot}, \Zl(\mbS^{\rmG}))$.
\end{enumerate} 
 The corresponding category of $\rmG$-equivariant spectra on the big \'etale site of $\Speck$ 
will be denoted \\
${\widetilde {\Spt}}^{\group}({\k_{et}}, \Zl(\mbS^{\rmG}_{\k,et}))$. These categories are also locally presentable and hence the model
categories are combinatorial.
One observes that the functor $\Zl(\quad )$
 induces a functor 
\[\Zl(\quad ): {\widetilde {\Spt}}^{\group}({\k_{\rm mot}}) ={\widetilde {\Spt}}^{\group}({\k_{\rm mot}}, \mbS^{\rmG}_{\k})\ra {\widetilde {\Spt}}^{\group}({\k_{\rm mot}}, \Zl(\mbS^{\rmG}_{\k})) \mbox{ and }\]
\[\Zl(\quad ): {\widetilde {\Spt}}^{\group}({\k_{et}}) ={\widetilde {\Spt}}^{\group}({\k_{et}}, \mbS^{\rmG}_{\k, et})\ra {\widetilde {\Spt}}^{\group}({\k_{et}}, \Zl(\mbS^{\rmG}_{\k,et})). \]
\vskip .3cm \noindent
Using the adjunction between the functors $\Zl(\quad) $ and $\rmU$, one may show readily that the functor $\Zl(\quad)$ is a
left Quillen functor and preserves weak-equivalences between cofibrant objects. However, in order to produce a triple,
using the above functors $\Zl(\quad ) $ and $\rmU$, one needs to compose the functor $\Zl(\quad ) $ with a fibrant replacement  functor.
This composition will be denoted
by ${\widetilde {\Zl}}(\quad ) $.

\vskip .2cm
\begin{proposition} 
 \label{funct.fib.repl}
 Let ${\widetilde {\Spt}}^{\group}({\k_{\rm mot}}, \Zl(\mbS^{\rmG}_{\k}))_{\it f} $ denote the fibrant objects in ${\widetilde {\Spt}}^{\group}({\k_{\rm mot}}, \Zl(\mbS^{\rmG}_{\k})) $.
\begin{enumerate}[\rm(i)]
 \item There exists a  functor $\X \ra {\widetilde {\Zl}(\X)}:
 {\widetilde {\Spt}}^{\group}({\k_{\rm mot}}, \mbS^{\rmG}_{\k}) \ra {\widetilde {\Spt}}^{\group}({\k_{\rm mot}}, \Zl(\mbS^{\rmG}_{\k}))_{\it f }$ left-adjoint to the forgetful functor
$\rmU:{\widetilde {\Spt}}^{ \group}(\k_{\rm mot}, \Zl(\mbS^{\rmG}_{\k}))_{\it f} \ra {\widetilde {\Spt}}^{\group}({\k_{\rm mot}}, \mbS^{\rmG}_{\k} )$. This functor preserves ${\mathbb A}^1$-fibrant spectra
and ${\mathbb A}^1$-stable weak-equivalences.
\item
The two adjoint functors $\rmU$ and ${\widetilde {\Zl}}(\quad)$ define a triple and hence a cosimplicial object with ${\widetilde {\Zl}}(\quad)$
in degree $0$. This cosimplicial object is group-like, i.e., each ${\widetilde {\Zl}}^n(\X)$ belongs to ${\widetilde {\Spt}}^{\rmG}(\k_{\rm mot}, \Zl(\mbS^{\rmG}_{\k}))$ and
all the cosimplicial structure maps are maps of $\Zl$-vector spaces, except for $d^0$. Therefore, it is fibrant in the Reedy-model structure on
cosimplicial objects in ${\widetilde {\Spt}}^{\group}({\k_{\rm mot}}, \Zl(\mbS^{\rmG}_{\k}))$.
\item The corresponding results also hold for the functor ${\widetilde {\Zl}}(\quad ): {\widetilde {\Spt}}^{\group}({\k_{et}}, \mbS^{\rmG}_{\k,et})\ra {\widetilde {\Spt}}^{\group}({\k_{et}}, \Zl(\mbS^{\rmG}_{\k,et}))$.
\end{enumerate}
\end{proposition}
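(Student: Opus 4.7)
The plan is to build $\widetilde{\Zl}$ as the composition of the strict left Quillen functor $\Zl(-)$ with a functorial fibrant replacement, then transport the resulting monadic structure to a cosimplicial object, and finally verify Reedy fibrancy by extracting extra codegeneracies from the triple.

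For part (i), the combinatoriality of the injective stable model structure on ${\widetilde {\Spt}}^{\group}({\k_{\rm mot}}, \Zl(\mbS^{\rmG}_{\k}))$ (noted immediately before the proposition) allows the small object argument to supply a functorial fibrant replacement $R$ together with a natural trivial cofibration $\mathrm{id} \to R$. Set $\widetilde{\Zl}(\X) := R(\Zl(\X))$, which is fibrant by construction. Since every object in the source is cofibrant in the injective stable model structure, the left Quillen functor $\Zl(-)$ preserves arbitrary weak-equivalences; composing with $R$ gives the preservation of $\mathbb{A}^1$-stable weak-equivalences. For the adjunction with $U$ on fibrant objects, observe that for fibrant $\rmY$ the trivial cofibration $\Zl(\X) \to \widetilde{\Zl}(\X)$ induces a bijection on homotopy classes of maps into $\rmY$, so the strict Quillen adjunction $[\Zl(\X), \rmY] \cong [\X, U\rmY]$ descends to $[\widetilde{\Zl}(\X), \rmY] \cong [\X, U\rmY]$.

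For part (ii), the strict Quillen adjunction $\Zl \dashv U$ produces a genuine monad $\rmT := U\Zl$ on ${\widetilde {\Spt}}^{\group}({\k_{\rm mot}})$, with unit $\eta$ and multiplication $\mu$, and hence a strict cosimplicial object $\rmT^{\bullet+1}\X$ with $\rmT^{n+1}\X$ in degree $n$, coface maps $d^i = \rmT^i \eta \rmT^{n-i}$, and codegeneracies $s^i = \rmT^i \mu \rmT^{n-i-1}$. Apply $R$ levelwise to obtain the fibrant version, whose $n$-th term I denote by $\widetilde{\Zl}^{n+1}(\X)$. Each such term carries a canonical $\Zl$-module structure because it is the image of $\widetilde{\Zl}(-)$. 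For $i \geq 1$ the coface $d^i$ applies $\eta$ internally with an outer $\widetilde{\Zl}$, hence is $\Zl$-linear; similarly each $s^i$ is built from $\mu$, which is induced by the $\Zl$-algebra structure on $\Zl(\X)$ and is therefore $\Zl$-linear. Only the outer $d^0 = \eta \rmT^n$ fails to be $\Zl$-linear, since it inserts a fresh $\widetilde{\Zl}$ on the outside.

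The step I expect to be most delicate is the verification of Reedy fibrancy in the cosimplicial model structure on ${\widetilde {\Spt}}^{\rmG}(\k_{\rm mot}, \Zl(\mbS^{\rmG}_{\k}))$. The $n$-th matching object is a limit built purely from codegeneracies (all $\Zl$-linear), so it lies naturally in the $\Zl$-module category. To show that the matching map $\widetilde{\Zl}^{n+1}(\X) \to M^n(\widetilde{\Zl}^{\bullet+1}\X)$ is a fibration, I would invoke the standard fact that a cosimplicial object arising from a triple has extra codegeneracies: the additional map $\mu \rmT^n \colon \rmT^{n+2}\X \to \rmT^{n+1}\X$ provides an explicit splitting of the matching construction after the levelwise fibrant replacement, exhibiting the matching map as a retract of a trivial fibration and hence a fibration. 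Part (iii) follows by an identical argument after replacing the Nisnevich site with the étale site throughout; the injective stable model structure on ${\widetilde {\Spt}}^{\group}({\k_{et}}, \Zl(\mbS^{\rmG}_{\k,et}))$ is equally combinatorial, and the triple/cosimplicial formalism is purely categorical.
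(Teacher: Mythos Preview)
Your proposal is essentially the paper's approach: both define $\widetilde{\Zl}$ as a functorial fibrant replacement of $\Zl$ supplied by the small object argument, and both derive the cosimplicial resolution and its Reedy fibrancy from the triple structure (the paper's ``group-like'' observation is exactly your extra-codegeneracy splitting). The one noteworthy difference is in how the adjunction in (i) is obtained: the paper lifts a given map $\Zl(\rmA)\to\tilde\rmE$ along the trivial cofibration $\Zl(\rmA)\to\widetilde{\Zl}(\rmA)$ on the nose (using that the small-object factorization is built from generating trivial cofibrations and $\tilde\rmE$ is fibrant), whereas you record only the bijection on homotopy classes $[\widetilde{\Zl}(\X),\rmY]\cong[\X,\rmU\rmY]$. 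Your weaker statement is enough because you then build the cosimplicial object from the strict adjunction $\Zl\dashv\rmU$ and apply $R$ levelwise, rather than from a putative strict adjunction $\widetilde{\Zl}\dashv\rmU$; this is a cleaner bookkeeping of exactly which triple is being used, at the cost of your $n$-th term being $R((\rmU\Zl)^{n+1}\X)$ rather than literally $(\rmU\widetilde{\Zl})^{n+1}\X$. Since these agree up to weak equivalence and the paper's own proof is only a sketch, this is a harmless variation.
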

\begin{proof} We will only sketch the outlines of a proof as the above discussion should be known to the experts. One defines the 
 functor $\X \ra {\widetilde {\Zl}(\X)}$ by taking a functorial fibrant replacement of the functor $ \X \ra {\Zl}(\X)$. Then given a map
 $\rmA \ra \rmU(\tilde \rmE)$, with $\tilde \rmE$ in ${\widetilde {\Spt}}^{\group}({\k_{\rm mot}}, \Zl(\mbS^{\rmG}_{\k}))_{\it f} $, this corresponds
 by adjunction to a map ${\Zl}(\rmA) \ra \tilde \rmE$. Then one shows that this map admits a lifting to a map ${\widetilde {\Zl}}(\rmA) \ra \tilde \rmE$
 using a standard construction of the functorial fibrant replacement using a small object argument: see, for example, \cite[Theorem 2.1.14]{Hov99}. This proves (i) and 
 the remaining statements follow.
\end{proof}

\vskip .2cm
\begin{definition}
\label{l.completion}
$ {\widetilde {\Zl}} _{\infty} (\X) = \holimD \{{\widetilde {\Zl}}^n(\X)|n\}$.
\end{definition}
\begin{proposition}
\label{props.l.completion}
The functor $\X \mapsto {\widetilde {\Zl}} _{\infty} (\X)$, ${\widetilde {\Spt}}^{\rmG}(\k_{\rm mot}) \ra {\widetilde {\Spt}}^{\rmG}(\k_{\rm mot})$ has the following properties:
\begin{enumerate}[\rm(i)]
\item{For $\X \in {\widetilde {\Spt}}^{\rmG}(\k_{\rm mot})$, $\widetilde {Z/l_{\infty}(\X)}$ is $\Zl$-complete, 
i.e., (i) it is fibrant in
 ${\widetilde {\Spt}}^{\group}(\k_{\rm mot})$ and for every map $\phi: \rmA \ra \rmB$ in ${\widetilde {\Spt}}^{\group}({\k_{\rm mot}})$ between cofibrant 
objects which induces an isomorphism
 \[\pi_*({\widetilde {\Zl}}(\rmA)) {\overset {\cong} {\ra}} \pi_*({\widetilde {\Zl}}(\rmB))\] of the reduced homology
presheaves,  the induced map
 $\phi^*:Map(\rmB, \widetilde {\Zl_{\infty}(\X)}) \ra Map(\rmA, \widetilde
{\Zl_{\infty}(\X)})$ is a weak-equivalence of simplicial sets, where $\Map$ denotes the simplicial mapping space functor. In particular, this applies to the case where $\rmB= {\widetilde {\Zl}}_{\infty}(\rmA)$ and
$\phi: \rmA \ra \rmB$ is the obvious Bousfield-Kan completion map.}
\item{If $\cE \eps {\widetilde {\Spt}}^{\group}(\k_{\rm mot})$ is a ring spectrum so are each $\widetilde {\Zl_n(\cE)}$
and ${\Zl_{\infty}(\cE)}$. If $\cE \in {\widetilde {\Spt}}^{\group}(\k_{\rm mot})$ is a ring spectrum 
and $\M \in  {\widetilde {\Spt}}^{\group}(\k_{\rm mot})$ is a module spectrum  over $\cE$, then
each ${\widetilde {\Zl_n(\M)}}$ is a ${\widetilde {\Zl_n(\cE)}}$-module spectrum and ${\widetilde {\Zl}}_{\infty}(\M)$ 
is a ${\widetilde {\Zl}}_{\infty}(\cE)$-module spectrum.}
\item{For each $\cE \eps {\widetilde {\Spt}}^{\group}(\k_{\rm mot})$, each 
 $\widetilde {\Zl_n(\cE)}$ is ${\mathbb A}^1$-local and belongs to ${\widetilde {\Spt}}^{\group}(\k_{\rm mot})$.}
\item The corresponding results also hold for the functor ${\widetilde {\Zl}} _{\infty} : {\widetilde {\Spt}}^{\group}({\k_{et}})\ra {\widetilde {\Spt}}^{\group}({\k_{et}})$.
\end{enumerate}

\end{proposition}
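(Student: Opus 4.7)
The plan is to establish the four assertions in order, leveraging the triple structure from Proposition~\ref{funct.fib.repl} together with standard properties of homotopy limits over the cosimplicial diagram $\Delta$.

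For part (i), the fibrancy of $\widetilde{\Zl}_{\infty}(\X)$ in $\widetilde{\Spt}^{\rmG}(\k_{\rm mot})$ is immediate from the Reedy fibrancy of the cosimplicial object $\{\widetilde{\Zl}^n(\X)\mid n\}$ observed in Proposition~\ref{funct.fib.repl}(ii), combined with the classical fact that $\holimD$ of a Reedy fibrant cosimplicial diagram is fibrant. For the mapping space statement, the hypothesis that $\phi$ induces an isomorphism on the presheaves $\pi_*(\widetilde{\Zl}(\cdot))$ means precisely that $\widetilde{\Zl}(\phi)$ is a weak equivalence between cofibrant objects in $\widetilde{\Spt}^{\rmG}(\k_{\rm mot},\Zl(\mbS^{\rmG}))$. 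By the $(\widetilde{\Zl},\rmU)$-adjunction of Proposition~\ref{funct.fib.repl}(i), it follows that the induced map $\Map(\rmB,\widetilde{\Zl}^n(\X))\to \Map(\rmA,\widetilde{\Zl}^n(\X))$ is a weak equivalence of simplicial sets for each $n\ge 0$. Applying $\holimD$ then yields the desired weak equivalence on $\Map(\cdot, \widetilde{\Zl}_{\infty}(\X))$, using that $\holimD$ preserves levelwise weak equivalences between Reedy fibrant cosimplicial objects.

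For part (ii), I would exploit the fact that the functor $\Zl(\cdot)$ is monoidal via the isomorphism~\eqref{Zl.ident.1}, so that $\Zl(\cE)$ inherits a ring spectrum structure and $\Zl(\M)$ a compatible module structure. Iterating, each $\widetilde{\Zl}^n(\cE)$ is a ring spectrum and each $\widetilde{\Zl}^n(\M)$ a module over it, with the cosimplicial structure maps respecting the pairings either strictly (after taking a lax monoidal fibrant replacement) or in the weaker sense of Definition~\ref{weak.ring.modules}. Passing to $\holimD$ of these diagrams produces the required ring/module structures on $\widetilde{\Zl}_{\infty}(\cE)$ and $\widetilde{\Zl}_{\infty}(\M)$ respectively. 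For part (iii), the key point is that the model structure on $\widetilde{\Spt}^{\rmG}(\k_{\rm mot},\Zl(\mbS^{\rmG}))$ is defined by transfer from the $\mathbb{A}^1$-local injective model structure on $\widetilde{\Spt}^{\rmG}(\k_{\rm mot})$; hence fibrant objects there are automatically $\mathbb{A}^1$-local. Since $\widetilde{\Zl}^n(\cE)$ is fibrant by construction, its underlying spectrum is $\mathbb{A}^1$-local and therefore belongs to $\widetilde{\Spt}^{\rmG}(\k_{\rm mot})$.

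For part (iv), the arguments above transfer verbatim to the \'etale setting, since Proposition~\ref{funct.fib.repl}(iii) provides the corresponding adjunction and cosimplicial structure on $\widetilde{\Spt}^{\rmG}(\k_{et},\Zl(\mbS^{\rmG}_{\k,et}))$, and the formal manipulations involving Reedy fibrancy, $\holimD$, and the adjunction are independent of the topology. The main obstacle I anticipate is in (ii): strict compatibility of a fibrant replacement functor with the monoidal pairings of $\Zl(\cdot)$ is generally unavailable, and one must either upgrade $\widetilde{\Zl}$ to a lax monoidal fibrant replacement (via a small-object argument carried out inside the categories of monoids and modules) or accept only the weak ring/module spectrum structure on the completion—the latter of which is sufficient for the applications elsewhere in the paper. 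Once this point is handled, the remaining steps reduce to formal manipulation of homotopy limits and adjunctions.
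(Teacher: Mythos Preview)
Your proposal is correct and follows the same approach as the paper's own proof, which is extremely terse: the paper only sketches (i) by noting that $\phi$ induces a weak equivalence ${\Zl}(\phi)\colon {\Zl}(\rmA) \ra {\Zl}(\rmB)$, hence a weak equivalence $\Map({\Zl}(\rmB), \widetilde{\Zl}(\X)) \ra \Map({\Zl}(\rmA), \widetilde{\Zl}(\X))$, and declares (ii)--(iv) to be clear. Your argument spells out the same mechanism more carefully---using the $(\widetilde{\Zl},\rmU)$-adjunction from Proposition~\ref{funct.fib.repl} levelwise in the cosimplicial direction and then passing to $\holimD$---and your caveat about lax monoidality of the fibrant replacement in (ii) is a valid point that the paper simply leaves implicit.
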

\begin{proof} Here we will again provide only a sketch of (i), as the remaining statements should be clear. A key observation is that 
 the map $\phi$ induces a weak-equivalence ${\Zl}(\phi): {\Zl}(\rmA) \ra {\Zl}(\rmB)$, so that the induced map $\phi^*:Map({\Zl}(\rmB), \widetilde {\Zl(\X)}) \ra Map({\Zl}(\rmA), \widetilde
{\Zl(\X)})$ is a weak-equivalence. Now (i) follows readily from this observation.
\end{proof}

\subsection{The refined \'etale realization}
\vskip .3cm
\begin{definition} (The refined \'etale realization functor)
\label{et.real.2}
 We define the refined \'etale realization functor as the composition $Et= {\widetilde {\Zl}}_{\infty} \circ \epsilon^*: {\widetilde {\Spt}}^{\rmG}(\k_{\rm mot}) \ra {\widetilde {\Spt}}^{\rmG}(\k_{et}, {\widetilde {\Zl}}_{\infty} (\mbS^{\rmG}_{\k,et}))$.
\end{definition} \index{$Et$} \index{refined \'etale realization}
\begin{proposition}
\label{ref.etal.real}
\begin{enumerate}[\rm(i)]
\item The above \'etale realization functor is weakly-monoidal, i.e., for each pairing $\X \wedge \Y \ra \Z$ of spectra in $\Spt(\k_{\rm mot})$,
there is a natural induced pairing $Et(\X) \wedge Et(\Y) \ra Et(\Z)$.
\item Assume the base field $k$ is separably closed, with characteristic $p$. Then for any prime $\ell \ne p$, the two categories of spectra
 ${\widetilde {\Spt}}(\k_{et}, {\widetilde {\Zl}}_{\infty} (\mbS_{\T, et}))$ and ${\widetilde {\Spt}}(\k_{et}, {\widetilde {\Zl}}_{\infty} (\mbS_{\rmS^2, et}))$ are equivalent,
 where $\mbS_{\T, et}$ ($\mbS_{\rmS^2, et}$) denotes the sphere spectrum defined as the $\T$-suspension spectrum (the $\rmS^2$-suspension spectrum, \res) of
 $\rmS^0$.
 \end{enumerate}
\end{proposition}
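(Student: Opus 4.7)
The plan is to handle the two parts separately, using the structure already in place.

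For part (i), since $Et = \widetilde{\Zl}_\infty \circ \epsilon^*$ is a composition, I would first invoke the weak monoidality of $\epsilon^*$ established in the proposition just after~\eqref{maps.topoi.0}, and then establish weak monoidality of $\widetilde{\Zl}_\infty$. The starting observation is the isomorphism $\Zl(P \wedge Q) \cong \Zl(P) \otimes_{\Zl} \Zl(Q)$ in~\eqref{Zl.ident.1}; composing with the canonical map $\Zl(P) \wedge \Zl(Q) \to \Zl(P) \otimes_{\Zl} \Zl(Q)$ from the smash product to the tensor product of simplicial $\Zl$-modules produces a natural lax structure $\Zl(P) \wedge \Zl(Q) \to \Zl(P \wedge Q)$. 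I would extend this to spectra using the left Kan extension description of the spectral smash product (as in step (iv) of subsection~\ref{Zl-completions}), transport it through the functorial fibrant replacement of Proposition~\ref{funct.fib.repl}(i) to obtain lax pairings on $\widetilde{\Zl}(\X)$, and then lift these to the cosimplicial iterates $\widetilde{\Zl}^\bullet$. The Reedy fibrancy assertion in Proposition~\ref{funct.fib.repl}(ii) makes the final passage to $\holimD$ commute with the smash pairing in the required sense, yielding the desired pairing $Et(\X) \wedge Et(\Y) \to Et(\Z)$.

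For part (ii), the key input is that over a separably closed field $k$ of characteristic $p \ne \ell$, primitive $\ell^n$-th roots of unity lie in $k$, so the étale sheaves $\mu_{\ell^n}$ are canonically isomorphic to the constant sheaves $\Z/\ell^n$. Combining this with the computation underlying Proposition~\ref{et.real.Eilen.Mac}, I would identify the $\ell$-completed étale realization of $\mathbb{G}_m$ with the étale sphere $\rmS^1_{et}$, both after Bousfield--Kan $\Zl$-completion. Since $\T \simeq \rmS^1 \wedge \mathbb{G}_m$ in $\Spc_*(\k_{\rm mot})$, this gives a weak equivalence $\widetilde{\Zl}_\infty(\epsilon^*(\T)) \simeq \widetilde{\Zl}_\infty(\mbS_{\rmS^2, et})$ in ${\widetilde \Spt}(\k_{et})$. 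I would then use this equivalence of indexing spheres to reindex structure maps, producing a Quillen pair between ${\widetilde {\Spt}}(\k_{et}, {\widetilde {\Zl}}_\infty(\mbS_{\T, et}))$ and ${\widetilde {\Spt}}(\k_{et}, {\widetilde {\Zl}}_\infty(\mbS_{\rmS^2, et}))$, which the equivalence on generating spheres should force to be a Quillen equivalence.

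The main obstacle I anticipate is the last step of (ii): promoting the equivalence at the level of completed individual spheres to a Quillen equivalence of the two model categories. Since these are built with distinct indexing collections of spheres, the reindexing must be shown compatible with the generating (trivial) cofibrations of item (v) in subsection~\ref{Zl-completions}, and with the Reedy-fibrant cosimplicial resolutions used to define $\widetilde{\Zl}_\infty$. A closely related technical point is verifying that the equivalence $\widetilde{\Zl}_\infty(\epsilon^*(\mathbb{G}_m)) \simeq \rmS^1_{et}$ after completion holds at the sheaf-of-spectra level rather than merely on presheaves of homotopy groups; here I would lean on the fact, already visible in Proposition~\ref{et.real.Eilen.Mac}, that pull-back to the étale site combined with $\Zl$-completion converts motivic Tate twists into trivial ones once a compatible system of roots of unity is available in the base field.
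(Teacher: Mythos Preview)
Your treatment of part (i) matches the paper's in spirit: the paper simply says the statement is clear from the properties of the completion functor established in Proposition~\ref{props.l.completion}, and your proposal spells out exactly those properties (the lax monoidal structure coming from ~\eqref{Zl.ident.1}, extension to spectra via the left Kan extension, and passage through the cosimplicial tower).

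For part (ii), your route is genuinely different from the paper's. The paper argues via the ring of Witt vectors $\rmW(\k)$: this is a Hensel ring with closed point $\Speck$ and generic point in characteristic $0$, so one can embed $\rmW(\k)$ into $\Cl$, lift ${\mathbb P}^1$ over $\k$ to ${\mathbb P}^1$ over $\Cl$, and then invoke the topological identification of ${\mathbb P}^1(\Cl)$ with $\rmS^2$ to obtain ${\widetilde {\Zl}}_{\infty}({\mathbb P}^1) \simeq {\widetilde {\Zl}}_{\infty}(\rmS^2)$. Your argument instead stays over $\k$ and exploits directly that $\mu_{\ell^n} \cong {\mathbb Z}/\ell^n$ once $\k$ is separably closed, identifying the $\Zl$-completed \'etale type of ${\mathbb G}_m$ with that of $\rmS^1$. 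Both arguments produce the same equivalence of completed indexing spheres; yours is more intrinsic and avoids the excursion through characteristic $0$ and Betti realization, while the paper's Witt vector argument is a standard device that makes the comparison with $\rmS^2$ transparent by literally passing to topology. The obstacle you flag, namely promoting the equivalence of indexing spheres to a Quillen equivalence of the two spectrum categories, is present in both approaches and is left implicit in the paper as well.
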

\begin{proof} (i) is clear in view of the properties of the completion functor discussed in Proposition ~\ref{props.l.completion}.
 To see (ii), one uses the following well-known argument. Let $\rmW(\k)$ denote the ring of Witt vectors of $\k$. This is a Hensel ring with
 closed point corresponding to $\k$  and the generic point in characteristic $0$. Therefore, one may imbed $\rmW(\k)$ into the field of complex numbers $\Cl$.
 Now the projective space ${\mathbb P}^1$ over $\k$ lifts to the projective space ${\mathbb P}^1$ over $\Cl$. On taking the $\Zl$-completion, one sees that
 ${\widetilde {\Zl}}_{\infty}({\mathbb P}^1) \simeq  {\widetilde {\Zl}}_{\infty}({\rm S}^2)$.
\end{proof}

\vskip .2cm
We conclude with the following result.
\begin{proposition} Assume the base scheme is a separably closed field $\k$  and
$char(\k) =p$. Let $E \eps \Spt(\k_{et}) $ be a constant sheaf of spectra so
that all the (sheaves of) homotopy groups
$\pi_n(E) $ are $\ell$-primary torsion, for some $\ell \ne p$. Then $E$ is ${\mathbb
A}^1$-local in $\Spt(\k_{et})$, i.e., the projection $\rmP \wedge {\mathbb
A}^1_{+} \ra \rmP$ induces a weak-equivalence: $Map (\rmP, E) \simeq Map (\rmP
\wedge {\mathbb A}^1_{ +}, E)$, $\rmP \eps \Spt(\k_{et})$.
\end{proposition}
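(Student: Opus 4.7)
The plan is to reduce the ${\mathbb A}^1$-locality of $E$ to the classical fact that the \'etale cohomology of the affine line with constant $\ell$-primary torsion coefficients is trivial when $\ell\ne\mathrm{char}(k)=p$. The claim to be shown is that for every $P\in\Spt(\k_{et})$, the natural map
\[
\mathrm{Map}(P,E)\ \longrightarrow\ \mathrm{Map}(P\wedge{\mathbb A}^1_+,E)
\]
is a weak equivalence. Since $\mathrm{Map}(-,E)$ turns homotopy colimits into homotopy limits, the class of $P$ satisfying this property is closed under homotopy colimits. Hence it suffices to verify the weak equivalence on the generating class of shifted suspension spectra $\Sigma^{\infty}X_+[n]$ with $X$ a smooth scheme of finite type over $k$. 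By adjunction and stabilization this reduces the claim to showing that for every such $X$ and every integer $n$, the projection $X\times{\mathbb A}^1\to X$ induces an isomorphism $E^n(X)\xrightarrow{\cong}E^n(X\times{\mathbb A}^1)$ on the generalized \'etale cohomology represented by $E$.

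Next, I would invoke the standard hypercohomology descent spectral sequence for the constant sheaf of spectra $E$ on the \'etale site:
\[
E_2^{p,q}=\rmH^p_{et}(X,\pi_{-q}(E))\ \Longrightarrow\ E^{p+q}(X).
\]
Since $k$ is separably closed and each $\pi_{-q}(E)$ is $\ell$-primary torsion with $\ell\ne p$, $X$ has finite $\ell$-cohomological dimension, so this spectral sequence converges strongly, and the projection $X\times{\mathbb A}^1\to X$ induces a morphism of such strongly convergent spectral sequences. It therefore suffices to check the statement on the $E_2$-page, i.e.\ to show that for every constant $\ell$-primary torsion abelian sheaf $M$ on $\Sm_{k,et}$, the projection $\mathrm{pr}\colon X\times{\mathbb A}^1\to X$ induces an isomorphism
\[
\rmH^p_{et}(X,M)\ \xrightarrow{\ \cong\ }\ \rmH^p_{et}(X\times{\mathbb A}^1,M),\qquad p\ge 0.
\]

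For the last step, consider the Leray spectral sequence for $\mathrm{pr}$. The classical homotopy invariance of \'etale cohomology (see e.g.\ Milne, \emph{\'Etale Cohomology}, VI.4, or SGA 4, XV) asserts that for a constant $\ell$-primary torsion sheaf $M$ with $\ell\ne p$, one has $\mathrm{pr}_*M=M$ and $R^i\mathrm{pr}_*M=0$ for $i>0$; this is precisely where the assumption $\ell\ne\mathrm{char}(k)$ is used. Collapsing the Leray spectral sequence yields the desired isomorphism, and unwinding the reductions completes the proof. The main point requiring care is the strong convergence of the descent spectral sequence, which is ensured by the finite $\ell$-cohomological dimension of smooth schemes of finite type over a separably closed field together with the $\ell$-primary torsion hypothesis on $\pi_*(E)$; beyond this, every step is a standard consequence of the $\ell$-acyclicity of ${\mathbb A}^1$ in \'etale cohomology.
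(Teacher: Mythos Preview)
Your proposal is correct and follows essentially the same route as the paper's proof: reduce to suspension spectra of smooth schemes $X$, identify $\mathrm{Map}(\Sigma^{\infty}X_+,E)$ with generalized \'etale cohomology, invoke the Atiyah--Hirzebruch/descent spectral sequence with $E_2^{s,t}=\rmH^s_{et}(X,\pi_{-t}(E))$, use finite $\ell$-cohomological dimension over a separably closed field to get strong convergence, and conclude by homotopy invariance of \'etale cohomology with $\ell$-primary torsion coefficients. Your write-up is in fact somewhat more explicit than the paper's, which leaves the $E_2$-level homotopy-invariance step implicit; the paper also phrases the reduction to generators via a simplicial resolution of a general $P$ rather than closure under homotopy colimits, but this is the same idea.
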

\begin{proof} First let $\rmP$ denote the suspension spectrum associated to some smooth
scheme $\rmX \in \Sm_{\k}$. Then ${\rm Map}(\rmP, E)$ (${\rm Map} (\rmP \wedge {\mathbb A}^1_{ +},
E)$) identifies with the spectrum defining the generalized \'etale cohomology
of $\rmX$ (of $\rmX \times {\mathbb A}^1$, \res) with respect to the spectrum $E$.
There exist Atiyah-Hirzebruch spectral sequences that converge to these
generalized \'etale cohomology groups with the $E_2^{s, t}$-terms being $H^s_{et}(\rmX,
\pi_{-t}(E))$
and $H^s_{et}(\rmX \times {\mathbb A}^1_{}, \pi_{-t}(E))$, \res. Since the
sheaves of homotopy groups $\pi_{-t}(E)$ are all $\ell$-torsion with $\ell \ne p$, and
$\k$ is assumed to be a separably closed field, $\rmX$ and $\rmX \times {\mathbb A}^1_{}$
have finite $\ell$-cohomological dimension. Therefore these spectral sequences
converge strongly and the conclusion of the proposition holds in this
case. For a general simplicial presheaf $\rmP$, one may find a simplicial
resolution 
where each term is a disjoint union of schemes as above (indexed by a small
set). Therefore, the conclusion of the proposition holds also for suspension
spectra
 of all simplicial presheaves and therefore for all spectra $\rmP$. \end{proof}
%\vskip .3cm
\vfill \eject
 %%%%%%%%%%%%%%%%%%%%%%%%%%%%%%%%%%%%%%%%%%%%%%%%%%%%%%%%%%%%%%%%%%%%%
%%%%%%%%%%%%%%%%%%%%%%%%%%%%%%%%%%%%%%%%%%%%%%%%%%%%%%%%%%%%%%%%%%%%%
%% REFERENCES
%%%%%%%%%%%%%%%%%%%%%%%%%%%%%%%%%%%%%%%%%%%%%%%%%%%%%%%%%%%%%%%%%%%%%

%%%%%%%%%%%%%%%%%%%%%%%%%%%%%%%%%%%%%%%%%%%%%%%%%%%%%%%%%%%%%%%%%%%%%
%\vskip -.4cm

\end{document}